\def\ps@pprintTitle{
 \let\@oddhead\@empty
 \let\@evenhead\@empty
 \def\@oddfoot{}
 \let\@evenfoot\@oddfoot}
 \renewcommand{\MaketitleBox}{
  \resetTitleCounters
  \def\baselinestretch{1}
  \begin{center}
    \def\baselinestretch{1}
    \Large \@title \par
    \vskip 18pt
    \normalsize\elsauthors \par
    \vskip 10pt
    \footnotesize \itshape \elsaddress \par
  \end{center}
  \vskip 12pt
}
\newcommand{\mb}{\mathbb}
\newcommand{\mc}{\mathcal}
\newcommand{\mfT}{{\mathcal{T} }}
\newcommand{\mT}{\mathrm{T}}
\newcommand{\mS}{\mathrm{S}}
\newcommand{\mfF}{{\mathfrak{F}}}
\newcommand{\inlinefrac}[2]{#1/#2}
\newtheorem{theorem}{Theorem}[section]
\newtheorem{lemma}[theorem]{Lemma}
\newtheorem{proposition}[theorem]{Proposition}
\numberwithin{equation}{section}
\newtheorem{definition}{Definition}[section]
\newtheorem{remark}{Remark}
\newtheorem{assumption}{A\!\!}[section]
\begin{document}
\begin{frontmatter}

\title{Hilbert Space-Valued LQ Mean Field Games: An Infinite-Dimensional Analysis\footnote{D. Firoozi would like to acknowledge the support of the Natural Sciences and Engineering Research Council of Canada
(NSERC), grants RGPIN-2022-05337. H. Liu would like to acknowledge the support of GERAD and IVADO through, respectively,  Doctoral Scholarship 2023-2024, and the NSERC-CREATE Program on Machine Learning in Quantitative Finance and Business Analytics (Fin-ML CREATE). }}

\author[1]{Hanchao Liu}
\author[2]{Dena Firoozi}

\address[1]{Department of Decision Sciences, HEC Montréal, Montreal, QC, Canada\\ (email: hanchao.liu@hec.ca)}

\address[2]{Department of Statistical Sciences, University of Toronto, Toronto, ON, Canada\\ (email: dena.firoozi@utoronto.ca)}

\begin{abstract}
This paper presents a comprehensive study of linear-quadratic (LQ) mean
field games (MFGs) in Hilbert spaces, generalizing the classic LQ MFG theory to scenarios involving $N$ agents with dynamics governed by infinite-dimensional stochastic equations. In this framework, both state and control processes of each agent take
values in separable Hilbert spaces. All agents are coupled through the average state of the population which appears in their linear dynamics and quadratic cost functional. Specifically, the dynamics of each agent incorporates
an infinite-dimensional noise, namely a $Q$-Wiener process, and an unbounded operator. The diffusion coefficient of each agent is stochastic involving the state, control, and average state processes. We
first study the well-posedness of a system of $N$ coupled semilinear infinite-dimensional
stochastic evolution equations establishing the foundation of
MFGs in Hilbert spaces. We then specialize to $N$-player LQ games described above and study the asymptotic behaviour as the number of agents, $N$, approaches infinity. We develop an infinite-dimensional variant of the Nash Certainty Equivalence principle and characterize a unique Nash equilibrium for the limiting MFG. Finally, we study the connections between the $N$-player game and the limiting MFG, demonstrating that the resulting limiting best-response strategies form an $\epsilon$-Nash equilibrium for the $N$-player game in Hilbert spaces. This property is established by showing that the mean field approximates the empirical average state within $\epsilon=o(\inlinefrac{1}{\sqrt{N}})$ when agents follow these strategies.
\end{abstract}

\begin{keyword}
Linear-quadratic mean field games, Stochastic equations in Hilbert spaces, infinite-dimensional analysis.
\end{keyword}

\end{frontmatter}

\section{Introduction}\label{sec_intro}

Mean field game (MFG) theory concerns the study and analysis of dynamic games involving a large number of indistinguishable agents who are asymptotically negligible. In such games, each agent is weakly coupled with other agents through the empirical distribution of their states or control inputs. The mathematical limit of this distribution, as the number of agents approaches infinity, is referred to as the mean field distribution. In these games, the behavior of agents in large populations, along with the resulting equilibrium, may be approximated by the solution of corresponding infinite-population games (see, e.g., \cite{huang2006large,huang2007large,lasry2007mean,  carmona2018probabilistic, bensoussan2013mean, cardaliaguet2019master}). 

Originally developed in finite-dimensional spaces, MFGs have become pivotal in addressing large-scale problems involving numerous interacting agents, and have found extensive applications in economics and finance (see, e.g. \cite{carmona2013mean,FirooziISDG2017, Firoozi2022MAFI, gomes_mean-field_2018, chang2022Systemic, casgrain_meanfield_2020,carmona2022carbon}). 

However, there are scenarios where Euclidean spaces do not adequately capture the essence of a problem such as non-Markovian systems. A clear and intuitive example is systems involving time delays. For instance, consider the interbank market model initially introduced in \cite{carmona2013mean}, where the logarithmic monetary reserve (state) of each bank is driven by its rate of borrowing or lending (control action). An extension of this model, studied in \cite{fouque2018mean}, incorporates a scenario where each bank must make a repayment after a specific period, drawing inspirations from \cite{carmona2018systemic}. This modification introduces delayed control actions into the state dynamics. Consequently, the state process is lifted to an infinite-dimensional function space (for Markovian lifting of stochastic delay equations see e.g. \cite{da2014stochastic}). However, due to a gap in the literature on infinite-dimensional MFGs, \cite{fouque2018mean} merely assumes the existence of the mean field in the infinite-population model.  

Beyond practical motivations, investigating MFGs in infinite-dimensional spaces offers an interesting mathematical perspective due to the distinctive treatment required compared to Euclidean spaces. In such spaces, the evolution of a stochastic process is governed by an infinite-dimensional stochastic equation (see e.g. \cite{da2014stochastic, gawarecki2010stochastic}). These equations, also termed stochastic partial differential equations (SPDEs), form a powerful mathematical framework for modeling dynamical systems with infinite-dimensional states and noises. In other words, these equations describe the evolution of random processes in infinite-dimensional Hilbert spaces. The extension to infinite dimensions becomes essential when dealing with phenomena that exhibit spatial or temporal complexities at various scales, such as fluid dynamics and quantum field theory. 

Single-agent optimal control problems in Hilbert spaces have been well-studied in the past (see, e.g.,  \cite{ichikawa1979dynamic,hu2022stochastic,tessitore1992some,nurbekyan2012calculus,gomes2015minimizers} for the LQ setting.). Recent works \cite{dunyak2022linear, dunyak2024quadratic} develop a framework in which optimal control problems over large-size networks are approximated by infinite-dimensional stochastic equations driven by $Q$-noise processes in $L^2[0,1]$. Moreover, McKean-Vlasov control problems in Hilbert spaces have recently been studied in \cite{cosso2023optimal}. To the best of our knowledge, there are only a few works related to mean field games in Hilbert spaces. Besides \cite{fouque2018mean} (where the noise processes are real Brownian motions), the contemporaneous work \cite{federico2024linear} studies a limiting mean field game system involving a constant volatility and a cylindrical Wiener process in Hilbert spaces, where the existence and uniqueness of the solution on both small and arbitrary time intervals are discussed.

The goal of this paper is to present a comprehensive study of LQ MFGs in Hilbert spaces, where the state equation of each agent is modeled by an infinite-dimensional stochastic equation
(for classic LQ MFGs with $\mathbb{R}^n$-valued state and control processes extensively studied in the literature we refer to \cite{bensoussan2016linear, huang2007large,huang2010large,firoozi2020convex,liu2023lqg, firoozi2020epsilon,firoozi2022LQG,huang2020linear,Malhame_Toumi_2024,li2023linear,FIROOZI2022-hybrid,FIROOZI2022-exploratory}.). Specifically, we consider an $N$-player game where the dynamics of agents are modeled by coupled linear stochastic evolution equations in a Hilbert space, with coupling occurring through the empirical average of the states. Each agent aims to minimize a quadratic cost functional, which is also affected by the empirical average of states. The contributions of the current paper can be summarized as follows.
\begin{itemize}
    \item[(i)] We study a general $N$-player LQ game. In particular, the state equation of each agent is influenced by the average state in the drift coefficient and incorporates Hilbert space-valued $Q$-Wiener processes and an unbounded system operator. Additionally, the volatility of each agent involves the state, control, and average state processes, resulting in stochastic volatility and multiplicative noise.
    \item[(ii)] To ensure the well-posedness of the Hilbert space-valued $N$-player game described above, we establish regularity results for a system of $N$ coupled semilinear stochastic evolution equations in Hilbert spaces. The dynamics of the $N$-player game studied falls as a special case within this system.
    \item[(iii)] We study the limiting problem where the number of agents goes to infinity. Establishing a Nash equilibrium for this model involves identifying a unique fixed point within an appropriate function space. To achieve this, we develop an infinite-dimensional variant of the Nash Certainty Equivalence. The model studied in \cite{federico2024linear} can be viewed as a special case of the limiting MFG model addressed in the current paper (see \Cref{lah}). However, due to the different methodologies used, our required conditions and results regarding the existence and uniqueness of a fixed point differ from those established in \cite{federico2024linear}.
    \item[(iv)]  We study the connections between the $N$-player game and the limiting mean field game, demonstrating that the Nash equilibrium strategies obtained in the limiting case form an $\epsilon$-Nash equilibrium for the $N$-player game in Hilbert spaces. This property is established by showing that the mean field approximates the empirical average state within $\epsilon=o(\inlinefrac{1}{\sqrt{N}})$ when agents follow these strategies. 
\end{itemize}
The organization of the paper is as follows. \Cref{prelim} introduces the notations and some preliminaries in infinite-dimensional stochastic calculus to ensure the paper is self-contained and accessible. \Cref{cabs} presents the regularity results for coupled stochastic evolution equations in Hilbert spaces. \Cref{HLMFG} addresses MFGs in Hilbert spaces including optimal control in the limiting case, the fixed-point argument, and both Nash and $\epsilon$-Nash equilibria. Finally, \Cref{conclusion} examines a toy model inspired by the model presented in \cite{fouque2018mean} and discusses potential extensions.
\section{Preliminaries in Infinite-Dimensional Stochastic Calculus}
\label{prelim}
\subsection{Notations and Basic Definitions}
We denote by $(H,\left \langle \cdot \right \rangle_{H})$ and $(V, \left \langle \cdot \right \rangle_{V})$ two separable Hilbert spaces (we drop the letter subscripts in the notation when they are clear from the context). By convention, we use $\left|\cdot \right|_{\cdot}$ to denote the norm in usual normed spaces and $\left\|\cdot \right\|_\cdot$ to denote the operator norm in operator spaces. Moreover, we denote the space of all bounded linear operators from $V$ to $H$ by $\mathcal{L}(V,H)$, which is a Banach space equipped with the norm $\left \| \mT \right \|_{\mathcal{L}(V,H)}=\sup_{\left | x \right |_{V}=1}\left | \mT x \right |_{H}$.  Let $\left \{ e_i \right \}_{i \in \mathbb{N}}$ denote an orthonormal basis of $V$, where $\mb{N}$ denotes the set of natural numbers. The space of Hilbert–Schmidt operators from $V$ to $H$, denoted $\mathcal{L}_2(V,H)$,  is defined as $
\mathcal{L}_2(V,H):=\left \{\mT \in \mathcal{L}(V,H):\sum_{i \in\mathbb{N}}\left | \mT e_i \right |_{H}^{2}< \infty   \right \}$, 
where $\left | \mT e_i \right |_{H} = \sqrt{\left \langle \mT e_i, \mT e_i\right \rangle_{H}}$. 
Note that $\mathcal{L}_2(V,H)$ is a separable Hilbert space equipped with the inner product $\left \langle \mT,\mS  \right \rangle_2 :=\sum_{i \in\mathbb{N}}  \left \langle \mT e_i,\mS e_i  \right \rangle_{H}$ for all $\mT,\mS \in \mathcal{L}_2(V,H).$
This inner product does not depend on the choice of the basis.

Moreover, an operator $\mT \in \mathcal{L}(V,H)$ is called trace class if $\mT$ admits the representation $\mT x=\sum_{i \in\mathbb{N}}b_i \left \langle a_i, x \right \rangle_{V}$, where $\{a_i\}_{i \in\mathbb{N}}$ and $\{b_i\}_{i \in\mathbb{N}}$ are two sequences in $V$ and $H$, respectively, such that $\sum_{i \in\mathbb{N}}\left | a_i \right |_{V}\left | b_i \right |_{H} < \infty$. We denote the set of trace class operators from $V$ to $H$ by $\mathcal{L}_1(V,H)$, which is a separable Banach space equipped with the norm $ \left \|\cdot  \right \|_{\mathcal{L}_1(V,H)}$ defined as 
\begin{equation} \notag
\left \|\mT  \right \|_{\mathcal{L}_1(V,H)}:= \inf\left \{ \sum_{i \in\mathbb{N}}\left | a_i \right |_{V}\left | b_i \right |_{H} : \left \{ a_i \right \}_{i \in \mathbb{N}} \in V , \left \{ b_i \right \}_{i \in \mathbb{N}} \in H\, \text{and}\,\,  \mT x=\sum_{i \in\mathbb{N}}b_i \left \langle a_i, x \right \rangle_{V}, \forall x \in V \right \}.
\end{equation}
Moreover, $\mathcal{L}_1(V)$ denotes the space of operators acting on $V$, which may be equivalently expressed as $\mathcal{L}_1(V, V)$. For an operator $Q \in \mathcal{L}_1(V)$, the trace of $Q$ is defined as 
\begin{equation}
\textrm{tr}(Q)=\sum_{i \in\mathbb{N}}\left<Qe_i,e_i \right>. \notag   
\end{equation}   
The series converges absolutely, i.e., $\sum_{i \in\mathbb{N}} \lvert \langle Qe_i, e_i \rangle \rvert < \infty$. Furthermore, it can be shown that $\left | \textrm{tr}(Q)\right | \leq \left\|Q \right\|_{\mathcal{L}_1(V)}$. For more details on Hilbert–Schmidt and trace class operators, we refer the reader to \cite{peszat2007stochastic}, \cite{da2014stochastic} and \cite{fabbri2017stochastic}. 

We use $\mT^\ast$ to denote an adjoint operator which is the unique operator that satisfies \( \langle \mT x, y \rangle = \langle x, \mT^*y \rangle \) for a linear operator $\mT$ and all vectors \( x \) and \( y \) in the appropriate spaces.

Let $(\mathscr{S},\mathscr{A},\mu)$ be a measure space and $(\mathcal{X}, \left | \cdot \right |_{\mathcal{X}})$ be a Banach space. We denote by $L^{p}(\mathscr{S}; \mc{X}), 1\leq p \leq \infty$, the corresponding Bochner spaces, which generalize the classic $L^p(\mathscr{S}; \mathbb{R})$ spaces. For details on Bochner spaces, we refer to \cite{hytonen2016analysis}. We fix the time interval \(\mfT = [0, T]\) with \(T > 0\). We denote $C(\mfT; H)$ as the set of all continuous mappings $h:\mfT \rightarrow H$, a Banach space equipped with the supremum norm, and $C_s(\mfT; \mathcal{L}(H))$ as the set of all strongly continuous mappings $f:\mfT \rightarrow  \mathcal{L}(H)$.

\begin{definition}[$Q$-Wiener Process \cite{da2014stochastic}]
Let $\big(\Omega, \mfF, \mathbb{P}\big)$ be a fixed complete probability space.  
Additionally,  $Q \in \mathcal{L}_1(V)$ and is a positive operator, i.e. $Q$ is self-adjoint and $\langle Qx,x\rangle \geq 0,\, \forall x \in V$. Then, a $V$-valued stochastic process $W = \{W(t): t\in \mfT\}$ is called a $Q$-Wiener process if   
\begin{itemize}
    \item[(i)] $W(0)=0,\, \mathbb{P}-a.s.$,
    \item[(ii)] $W$ has continuous trajectories,
    \item[(iii)] $W$ has independent increments,
    \item[(iv)] $\forall s, t \in \mfT$ such that $0<s<t$, the increment $W(t)-W(s)$ is normally distributed. More specifically,  $W(t)-W(s) \sim \mathcal{N}(0,(t-s)Q)$.\\
\end{itemize} 
\end{definition}
A $V$-valued $Q$-Wiener process $W$ may be constructed as 
\begin{equation} \label{qwie}
W(t)=\sum_{j \in\mathbb{N}}\sqrt{\lambda _{j}}\beta _{j}(t)e_{j},\quad t \in \mfT,  
\end{equation}
where $\left \{ \beta _j \right \}_{j \in\mathbb{N}}$ is a sequence of mutually independent real-valued Brownian motions
defined on a given filtered probability space. Moreover, $\left\{e_{j} \right\}_{j \in\mathbb{N}}$ is a complete orthonormal basis of $V$ and $\left\{\lambda _{i} \right\}_{i \in\mathbb{N}}$ is a sequence of positive numbers that diagonalize the operator $Q$. Moreover, $\left\{\lambda _{i} \right\}_{i \in\mathbb{N}}$ is a sequence of positive numbers, and $\left\{e_{j} \right\}_{j \in\mathbb{N}}$ is a complete orthonormal basis of $V$ that, together, diagonalize the operator $Q$. In other words, for each $j \in \mathbb{N}$, $\lambda_j$ is an eigenvalue of $Q$ corresponding to the eigenvector $e_j$ such that 
\begin{equation}
Qe_j=\lambda _{j}e_j. \notag
\end{equation}
Note that in this case, we have $\textrm{tr}(Q)=\left\|Q \right\|_{\mathcal{L}_1(V)}$ since $Q$ is a positive operator. Moreover, the series in \eqref{qwie} converges in $L^{2}\left ( \Omega ,C\left (\mfT, V \right ) \right )$ \cite{gawarecki2010stochastic}.

Consider the probability space $(\Omega, \mfF, \mc{F}, \mb{P})$ where the filtration $\mc{F}=\left \{{\mathcal{F}_t} : t \in \mfT\right \}$ satisfies the usual condition. Similarly to the literature (see e.g. \cite{da2014stochastic,gawarecki2010stochastic, fabbri2017stochastic}), we assume that $W$, defined in \eqref{qwie}, is a $Q$-Wiener process with respect to $\mc{F}$, i.e. $W(t)$ is $\mathcal{F}_t$-measurable, and $W(t+h)-W(t)$ is independent of $\mathcal{F}_t$, $\forall h \geq 0$, $\forall t, t+h \in \mfT$. 

We denote by $V_Q=Q^{\frac{1}{2}}V$ the separable Hilbert space endowed with the inner product
\begin{equation} \label{Vq}
\left \langle u,v \right \rangle_{V_{Q}}=\sum_{j \in \mathbb{N}}\frac{1}{\lambda_j}\left \langle u,e_j \right \rangle_V\left \langle v,e_j \right \rangle_V,\quad u,v \in V_Q.
\end{equation}
Note that 
\begin{align}  \label{rell2}
&\mathcal{L}(V,H) \subseteq  \mathcal{L}_2(V_Q,H) \\ \notag &
\left \| \mT \right \|_{\mathcal{L}_2(V_Q,H)}^{2} \leq \textup{tr}(Q)\left \| \mT \right \|_{\mathcal{L}(V,H)}^{2}, \quad \forall \mT \in \mathcal{L}(V,H)   
\end{align}
Below, we introduce certain spaces of stochastic processes defined on a filtered probability space $ \big(\Omega, \mfF,\mc{G}=\left\{ \mathcal{G}_t : t \in \mfT\right\}, \mathbb{P} \big)$ with values in a Banach space $(\mathcal{X}, \left | \cdot \right |_{\mathcal{X}})$. 
\begin{itemize}
    \item  $\mathcal{M}^2(\mfT;\mathcal{X})$ denotes the Banach 
space of all $\mathcal{X}$-valued progressively measurable processes $x(t)$ satisfying 
\begin{equation}\label{M2}
    \left|x \right|_{\mathcal{M}^2(\mfT;\mathcal{X})}:=\left (\mathbb{E}\int_{0}^{T}\left|x(t) \right|^{2}_{\mathcal{X}}dt  \right )^{\frac{1}{2}}< \infty.   
\end{equation}
    \item $\mathcal{H}^2(\mfT;\mathcal{X})$ denotes the Banach 
space of all $\mathcal{X}$-valued progressively measurable processes $x(t)$ satisfying
\begin{equation}\label{H2}
\left|x\right|_{\mathcal{H}^2(\mfT;\mathcal{X})}= \left (\displaystyle \sup_{t \in \mfT} \mathbb{E}\left|x(t) \right|_{\mathcal{X}} ^{2}\right )^{\frac{1}{2}}< \infty.
\end{equation} 
\end{itemize}

Obviously, $\mathcal{H}^2(\mfT;\mathcal{X}) \subseteq \mathcal{M}^2(\mfT;\mathcal{X})$. Furthermore, $\mathcal{B}(\mathcal{X})$ denotes the Borel sigma-algebra on the space $\mathcal{X}$. 

\subsection{Controlled Infinite-Dimensional Linear SDEs}\label{ContInfDimSDEs}
We denote by $H$, $U$, and $V$ three real separable Hilbert spaces. We  then introduce a controlled infinite-dimensional stochastic differential equation (SDE) as 
\begin{align} \label{dyna}
 dx(t)&=(Ax(t)+Bu(t)+m(t))dt+(Dx(t)+Eu(t)+v(t)) dW(t) ,\notag \\
 x(0)&= \xi,  
\end{align}
where $\xi \in L^2(\Omega;H)$. Moreover, $x(t) \in H$ denotes the state and $u(t) \in U$ the control action at time $t$. The control process $u=\{u(t): t\in \mfT\}$ is assumed to be in $\mathcal{M}^2(\mfT;U)$. The $Q$-Wiener process $W$ is defined on a filtered probability space $\left (\Omega, \mfF,\left\{ \mathcal{F}_t\right\}_{t \in \mfT}, \mathbb{P} \right )$ and takes values in $V$. The unbounded linear operator $A$, with domain $\mathcal{D}(A)$, is an infinitesimal generator of a $C_{0}$-semigroup $S(t) \in \mathcal{L}(H),\, t \in \mfT$. Moreover, there exists a constant $M_T$ such that 
\begin{equation}
\left\|S(t) \right\|_{\mathcal{L}(H)}   \leq  M_T,\quad \forall t \in \mfT.\label{S_bound}
\end{equation}
 where $M_T := M_A e^{\alpha T}$, with $M_A \geq 1$ and $\alpha \geq 0$ \cite{goldstein2017semigroups}. The choices of $M_A$ and $\alpha$ are independent of $T$. Furthermore, $B \in \mathcal{L}(U,H)$, $D \in \mathcal{L}(H,\mathcal{L}(V,H))$, $E \in \mathcal{L}(U,\mathcal{L}(V,H))$, the process $m\in C(\mfT;H)$, and the process $v \in L^{\infty}(\mfT;\mathcal{L}(V,H))$. 
We focus on the mild solution of \eqref{dyna}.
\begin{definition}[Mild Solution of an Infinite-Dimensional SDE \cite{da2014stochastic}]
A mild solution of \eqref{dyna} is a process $x \in \mathcal{H}^2(\mfT;H) $ such that $\forall t \in \mfT$, we have
\begin{equation} \label{milds}
 x(t)=S(t)\xi +\int_{0}^{t}S(t-r)(Bu(r)+m(r))dr+\int_{0}^{t}S(t-r)(Dx(r)+Eu(r)+v(r)) dW(r),\quad  \mathbb{P}-a.s. 
\end{equation}    
\end{definition}   
For the results on the existence and uniqueness of a mild solution to  \eqref{dyna}, we refer to  \cite[Section 1.4]{fabbri2017stochastic}.
\begin{remark}(\cite[Chapter 6]{da2014stochastic})\label{alt-mild-sol}
Since the diffusion term in \eqref{dyna} takes the form of multiplicative noise, the mild solution can be equivalently expressed as 
\begin{equation} \label{milds2}
 x(t)=S(t)\xi +\int_{0}^{t}S(t-r)(Bu(r)+m(r))dr+\int_{0}^{t}\sum_{j=1}^{\infty }S(t-r)(D_jx(r)+E_ju(r)+v(r)) d\beta_{j}(r),
\end{equation}       
where the bounded linear operators $D_j \in \mathcal{L}(H)$ and $E_j \in \mathcal{L}(U, H) $ are defined as
\begin{equation} \label{diei}
D_j\,x:=\sqrt{\lambda _j}\,(Dx)\,e_j,\quad E_j\,u:=\sqrt{\lambda _j}\,(Eu)\,e_j,\quad  x \in H,\,\, u \in U.    
\end{equation}
Moreover, the operators $D \in \mathcal{L}(H,\mathcal{L}(V,H))$ and $E \in \mathcal{L}(U,\mathcal{L}(V,H))$ may be expressed as 
\begin{equation} \label{infsum}
(Dx)v=\sum_{j=1}^{\infty }\frac{1}{\sqrt{\lambda _j}}\left<v,e_j \right>D_j\,x,\quad (Eu)v=\sum_{j=1}^{\infty }\frac{1}{\sqrt{\lambda _j}}\left<v,e_j \right>E_j\,u,\quad  v \in V.
\end{equation}
\end{remark}
\begin{remark} \label{integrand}
    In general,  stochastic integrals with respect to a $Q$-Wiener process are constructed for suitable processes which take values in $\mathcal{L}_2(V_Q,H)$, see e.g. \cite{da2014stochastic} and \cite{gawarecki2010stochastic}. However, in this paper, as well as in many works concerning infinite-dimensional control problems, the integrand processes can only be $\mathcal{L}(V,H)$-valued. We refer to \cite{curtain1970ito} and \cite{ichikawa1982stability} for the construction of stochastic integrals for suitable $\mathcal{L}(V,H)$-valued processes.  Such constructions are special cases of those presented in \cite{da2014stochastic} and \cite{gawarecki2010stochastic}.
 
\end{remark}

\section{Coupled Controlled Stochastic Evolution Equations in Hilbert Space} \label{cabs}
In classical mean field games (MFGs), the dynamics of the relevant $N$-player game is modeled as a system of finite-dimensional SDEs, the regularities of which are well-studied in the literature.  However, in this paper, the dynamics of the $N$-player game will be modeled as $N$ coupled infinite-dimensional stochastic equations. To be more specific, the state of each agent satisfies an infinite-dimensional stochastic equation which is involved with the states of all agents. The well-posedness of such a system has not been rigorously established in the literature. Thus, we aim to address it in this section. For this purpose, we first discuss the existence of a sequence of independent $Q$-Wiener processes. Next, we prove the existence and uniqueness of the solution to a system of $N$ coupled infinite-dimensional stochastic equations.

More precisely, in the classic setup of MFGs, the individual idiosyncratic noises form a sequence of independent real-valued Brownian motions. In the current context, however, we require a sequence of independent $Q$-Wiener processes. The following proposition examines the existence of such a sequence.

\begin{proposition} \label{sQ}
Let $\left (\Omega, \mfF,\mathbb{P} \right )$ be a probability space and $Q$ be a positive trace class operator on the separable Hilbert space $V$. Then, there exists a sequence of independent $V$-valued $Q$-Wiener processes $\left\{W_{i} \right\}_{i \in \mathbb{N}}$ defined on the given probability space.
\end{proposition}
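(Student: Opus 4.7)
The plan is to build each $W_i$ from its own copy of an i.i.d.\ family of real Brownian motions, following the standard series representation \eqref{qwie}, and then deduce independence from the independence of the underlying scalar processes. Concretely, the starting point is the classical fact that, possibly after enlarging $(\Omega,\mfF,\mathbb{P})$, we can find a doubly-indexed family $\{\beta_{i,j}\}_{i,j\in\mathbb{N}}$ of mutually independent standard real-valued Brownian motions adapted to a common filtration $\mc{F}=\{\mathcal{F}_t\}_{t\in\mfT}$. This uses only that $\mathbb{N}\times\mathbb{N}$ is countable, together with the standard existence result for countable families of independent Wiener processes on a sufficiently rich probability space. I will state this as the initial ingredient and cite, e.g., \cite{da2014stochastic,gawarecki2010stochastic}.

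Next, for each $i\in\mathbb{N}$, I define
\begin{equation}\notag
W_i(t):=\sum_{j\in\mathbb{N}}\sqrt{\lambda_j}\,\beta_{i,j}(t)\,e_j,\qquad t\in\mfT,
\end{equation}
where $\{\lambda_j,e_j\}_{j\in\mathbb{N}}$ diagonalizes $Q$. For each fixed $i$, the family $\{\beta_{i,j}\}_{j\in\mathbb{N}}$ is a sequence of mutually independent real Brownian motions, so by the construction recalled right after \eqref{qwie} the series converges in $L^2(\Omega;C(\mfT;V))$ and $W_i$ is a $V$-valued $Q$-Wiener process with respect to its natural filtration. This gives existence of each $W_i$ at essentially no extra cost beyond the scalar Brownian motions already secured.

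The main step, and where the actual content lies, is the mutual independence of the family $\{W_i\}_{i\in\mathbb{N}}$. For each $i$, the partial sums $W_i^{(n)}(t):=\sum_{j=1}^{n}\sqrt{\lambda_j}\beta_{i,j}(t)e_j$ are measurable with respect to $\mathcal{G}_i:=\sigma(\beta_{i,j}(s):j\in\mathbb{N},\, s\in\mfT)$, and by $L^2$-convergence (and hence almost sure convergence along a subsequence) $W_i$ itself is $\mathcal{G}_i$-measurable as a $C(\mfT;V)$-valued random element. Because the full family $\{\beta_{i,j}\}_{i,j\in\mathbb{N}}$ is mutually independent, the $\sigma$-algebras $\{\mathcal{G}_i\}_{i\in\mathbb{N}}$ are mutually independent, and hence so are the processes $\{W_i\}_{i\in\mathbb{N}}$. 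I expect this independence-through-measurability argument to be the delicate piece to write carefully, since it requires passing from finite-dimensional independence of the $\beta_{i,j}$'s to independence of $C(\mfT;V)$-valued objects; the cleanest route is to note that independence of $\sigma$-algebras is preserved under taking measurable functions, so independence of $\{\mathcal{G}_i\}$ is inherited by the sub-$\sigma$-algebras generated by the $W_i$'s.

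Finally, I would verify that the $W_i$'s are jointly adapted to $\mc{F}$ and that each $W_i(t+h)-W_i(t)$ is independent of $\mathcal{F}_t$ for $h\geq 0$, so that the compatibility with the filtration assumed in the setup (in the paragraph preceding \eqref{Vq}) is retained. This is immediate from the analogous properties of the scalar Brownian motions $\beta_{i,j}$ and the $L^2$-convergence of the defining series, and completes the proof.
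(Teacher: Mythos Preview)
Your argument is correct and follows essentially the same route as the paper: build each $W_i$ via the series \eqref{qwie} from its own sequence of independent real Brownian motions, and deduce independence of the $W_i$'s from independence of the $\sigma$-algebras generated by the underlying scalar processes. The one point of departure is how the doubly-indexed family $\{\beta_{i,j}\}$ is obtained: the paper takes the single sequence $\{\beta_j\}$ already assumed to exist on $(\Omega,\mfF,\mathbb{P})$ in \eqref{qwie} and relabels it through a bijection $\mathbb{N}\times\mathbb{N}\to\mathbb{N}$, so that no enlargement of the probability space is needed; your phrase ``possibly after enlarging $(\Omega,\mfF,\mathbb{P})$'' is harmless in spirit but does not quite match the statement, which asks for the $W_i$ on the \emph{given} space.
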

\begin{proof}
Let $W, \left \{ \beta _j \right \}_{j \in\mathbb{N}}$ be the processes defined in \eqref{qwie}, and the corresponding natural filtrations be defined as $\mc{F}^W=\{\mc{F}^W_t: t \in \mfT\}$ and $\mc{F}^\beta=\{\mc{F}^\beta_t: t \in \mfT\}$, where $\mathcal{F}_{t}^{W}=\sigma ( W(s),0 \leq s \leq t  )$, $\mathcal{F}_{t}^{\beta}=\sigma ( \beta_j(s),0 \leq s \leq t, j \in\mathbb{N}  )=\sigma ( \bigcup_{j\in \mathbb{N}} \sigma(\beta_j(s),0 \leq s \leq t)   )$. Subsequently, the augmented filtrations $\bar{\mc{F}}^W=\{\bar{\mc{F}}^W_t: t \in \mfT\}$ and $\bar{\mc{F}}^\beta=\{\bar{\mc{F}}^\beta_t: t \in \mfT\}$ consist of $\bar{\mathcal{F}}_{t}^{W}=\sigma(\mathcal{F}_{t}^{W}\cup \mathcal{N})$ and $\bar{\mathcal{F}}_{t}^{\beta}=\sigma(\mathcal{F}_{t}^{\beta}\cup \mathcal{N})$. It is evident that $\bar{\mathcal{F}}_{t}^{W}=\bar{\mathcal{F}}_{t}^{\beta}$. By applying the enumeration of $\mathbb{N}\times \mathbb{N}$ to the sequence of mutually independent Brownian motions $\left \{ \beta _j \right \}_{j \in\mathbb{N}}$, we can obtain infinitely many distinct sequences of Brownian motions $ \{ \beta^{i}_j \}_{j \in \mathbb{N}}=\{\beta _{1}^{i}, \beta _{2}^{i}, \dots, \beta _{j}^{i}, \dots \}$, each sequence indexed by $i$. The real-valued Brownian motions $\beta _{j}^{i}$ are mutually independent for all indices $i,j \in \mb{N}$. Now we construct a sequence of Q-Wiener processes $\left\{W_{i}\right\}_{i\in \mathbb{N}}$, where $W_{i}(t)$ is defined by 
\begin{equation} \label{wiei}
W_{i}(t)=\sum_{j\in \mathbb{N}}\sqrt{\lambda _{j}}\beta ^{i}_{j}(t)e_{j},\quad t \in \mfT.   
\end{equation}
For our purpose, it is enough to show that the augmented filtrations $\{ \bar{\mathcal{F}}^{\beta^i} \}_{i \in \mathbb{N}}$ are independent. Recall that $\mathcal{F}^{\beta^{i}}=\{\mathcal{F}_{t}^{\beta^{i}}: t \in \mfT\}$, where $\mathcal{F}_{t}^{\beta^{i}}=\sigma ( \bigcup_{j\in \mathbb{N}} \sigma(\beta_j^{i}(s),0 \leq s \leq t)   )$ and $\{ \beta^{i}_j \}_{j,i \in \mathbb{N}}$ are independent Brownian motions. Then the independence of $\{ \mathcal{F}_{t}^{\beta^i} \}_{i \in \mathbb{N}}, \forall t \in \mfT$, follows from the standard results in measure theory (see for instance \cite[Proposition 10.1.7]{cohn2013measure}), and hence $ \{ \bar{\mathcal{F}}_{t}^{\beta^i}\}_{i \in \mathbb{N}}, \forall t \in \mfT$ are also independent. 
\end{proof}
It is straightforward to verify that the sequence of processes $\left\{W_{i}\right\}_{i \in \mathbb{N}}$, constructed in Proposition \ref{sQ}, are (mutually independent) $Q$-Wiener processes with respect to $\mathcal{F}$. This measurability arises because each $W_i$ is constructed using a subsequence of real-valued Brownian motions generating the original $Q$-Wiener process $W$ given by \eqref{qwie}. Usually, this ``universal" filtration $\mathcal{F}$ is larger than necessary. Below, we construct a reduced filtration. \vspace{0.1cm}\\
\textbf{Reduced Filtration \(\mathcal{F}^{[N]}=\big\{\mathcal{F}^{[N]}_t: t \in \mfT \big\}\):}\textit{
Consider a set $\mc{N}=\{1,2,…,N\}$ and let \(\left\{W_{i}\right\}_{i \in \mathcal{N}}\) be $N$ independent $Q$-Wiener processes constructed in \Cref{sQ}. A reduced filtration \(\mathcal{F}^{[N]}\) may be constructed under which these processes are independent \(Q\)-Wiener processes. Note that the processes \(\left\{W_{i} \right\}_{i \in \mc{N}}\) are constructed as described in \eqref{wiei} using \(N\) sequences of mutually independent Brownian motions \(\{ \beta^{i}_j\}_{j \in \mathbb{N}, i \in \mathcal{N}}\). These $N$ sequences may be combined to form a new sequence of mutually independent  Brownian motions. We then construct a new \(Q\)-Wiener process \(W^N\) using this resulting sequence as in \eqref{qwie} and define \(\mathcal{F}^{[N]}\) as the normal filtration that makes \(W^N\) a \(Q\)-Wiener process. Clearly, this filtration  only makes the processes \(\left\{W_{i} \right\}_{i \in \mathcal{N}}\) independent \(Q\)-Wiener processes and can be smaller than $\mathcal{F}$.}

We are now ready to introduce a system of coupled infinite-dimensional stochastic equations defined on $\big(\Omega, \mfF,\mc{F}^{[N]}, \mathbb{P} \big)$ describing the temporal evolution of the vector process $\textbf{x}=\{\textbf{x}(t)=(x_1(t), x_2(t), \ldots, x_N(t)):\, t\in \mfT\}$. Note that $\textbf{x}$ is an $H^{N}$-valued stochastic process, where $H^{N}$ denotes the $N$-product space of $H$, equipped with the product norm $ \left|\textbf{x}(t) \right|_{H^{N}}=\left (\sum_{i\in \mc{N}} \left|x_i(t) \right|_{H}^{2}  \right  )^{\frac{1}{2}}$.
Subsequently,  $\mathcal{M}^2(\mfT;H^N)$ and  $\mathcal{H}^2(\mfT;H^N)$ are defined as the 
spaces of all $H^N$-valued progressively measurable processes $\textbf{x}$, respectively, satisfying $ \left|\textbf{x}\right|_{\mathcal{M}^2(\mfT;H^N)}<\infty$ and $\left|\textbf{x}\right|_{\mathcal{H}^2(\mfT;H^N)}< \infty$. 

The differential form of a system of coupled infinite-dimensional stochastic equations can be represented by 
\begin{align} \label{casd}
&dx_i(t)=(Ax_i(t)+F_i(t,\textbf{x}(t),u_i(t)))dt+B_i(t,\textbf{x}(t),u_i(t))dW_{i}(t).  \notag \\
&x_i(0)=\xi_i,
\end{align}
where, as defined in \eqref{dyna}, $A$ is a $C_0$-semigroup generator. Moreover, the control action $u_i, i \in \mathcal{N}$, is a $U$-valued progressively measurable process, and the initial condition $\xi_i, i \in \mathcal{N}$, is $H$-valued and $\mathcal{F}_0^{[N]}$-measurable. Moreover, $\left\{W_{i}\right\}_{i\in \mathcal{N}}$ is a set of mutually independent $Q$-Wiener processes, each constructed as in Proposition \ref{sQ} and applied to the filtration $\mathcal{F}^{[N]}$. Furthermore, the family of maps $F_i: \mfT\times H^{N} \times U \rightarrow H$ and $B_i: \mfT\times H^{N} \times U \rightarrow \mathcal{L}_2(V_Q,H), \forall i \in \mathcal{N}$, are defined for all $i\in \mathcal{N}$. 
\begin{assumption}\label{as0} For each $i \in \mathcal{N}$, the initial condition $\xi_i$ belongs to $L^2(\Omega;H)$ and is  $\mathcal{F}_0^{[N]}$-measurable.   
\end{assumption}
We focus on the solution of \eqref{casd} in a mild sense, which is defined below. 
\begin{definition}(Mild Solution of Coupled Infinite-Dimensional SDEs) A process $\textbf{x} \in \mathcal{H}^{2}(\mfT;H^N)$, where $\textbf{x}=\{\textbf{x}(t)=(x_1(t), x_2(t), \ldots, x_N(t)):\, t\in \mfT\}$, is said to be a mild solution of \eqref{casd} if, for each \(i \in \mathcal{N}\), the process \(x_i\) is defined \(\mathbb{P}\)-almost surely by the integral equation
\begin{equation} \label{mildcoup}
x_i(t)=S(t)\xi _i+\int_{0}^{t}S(t-r)F_i(r,\textbf{x}(r),u_i(r))dr+\int_{0}^{t}S(t-r)B_i(r,\textbf{x}(r),u_i(r))dW_{i}(r), \,\,\,\forall t \in \mfT,    
\end{equation}
where \(S(t)\) is the $C_0$-semigroup generated by $A$.
\end{definition}
We make the following assumptions on the system of coupled stochastic evolution systems described by \eqref{casd} for every $i\in \mathcal{N}$.
\begin{assumption} \label{as1}   
 $u_i \in \mathcal{M}^2(\mfT;U)$.
\end{assumption}
\begin{assumption} \label{as2}
    The mapping $F_i: \mfT\times H^{N} \times U \rightarrow H $ is $\mathcal{B}\left ( \mfT \right ) \otimes \mathcal{B}(H^{N})\otimes \mathcal{B}(U)/ \mathcal{B}(H) $-measurable. 
    \end{assumption}
\begin{assumption}
   The mapping $B_i: \mfT\times H^{N} \times U \rightarrow \mathcal{L}_2(V_Q,H)$ is $\mathcal{B}\left ( \mfT \right )\otimes\mathcal{B}(H^{N}) \otimes \mathcal{B}(U)/\mathcal{B}(\mathcal{L}_2(V_Q,H))$-measurable, where the Hilbert space $V_Q$ is as defined in \Cref{prelim}.
 \end{assumption}  
 \begin{assumption} \label{as5}
 There exists a constant $C$ such that, for every $t \in \mfT$, $u \in U$ and $\textbf{x},\textbf{y} \in H^N$, we have
    \begin{gather}
     \left|F_i(t,\textbf{x},u)-F_i(t,\textbf{y},u) \right|_H+\left\| B_i(t,\textbf{x}, u)-B_i(t,\textbf{y}, u)\right\|_{\mathcal{L}_2}\leq C \left|\textbf{x}-\textbf{y} \right |_{H^{N}}, \notag \allowdisplaybreaks\\
    \left|F_i(t,\textbf{x}, u) \right|^2_H+\left\| B_i(t,\textbf{x}, u)\right\|_{\mathcal{L}_2}^2\leq C^{2}\left (1+\left|\textbf{x} \right|^2_{H^{N}}+ \left|u \right|^{2}_U  \right ).    \notag
    \end{gather}
    \end{assumption}
The following theorem establishes the existence and uniqueness of a mild solution to the coupled stochastic evolution equations given by \eqref{casd}. This result extends Theorem 7.2 in \cite{da2014stochastic}, which addresses the existence and uniqueness of a mild solution for a single stochastic evolution equation without coupling.

\begin{theorem}(Existence and Uniqueness of a Mild Solution) \label{enu}
Under \Cref{as1}-\Cref{as5}, the set of coupled stochastic evolution equations given by \eqref{casd} admits a unique mild solution in the space $\mathcal{H}^{2}(\mfT;H^N)$.     
\end{theorem}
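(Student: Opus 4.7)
The plan is to prove \Cref{enu} by a Banach fixed-point argument on the space $\mathcal{H}^2(\mfT;H^N)$, mimicking the strategy of Theorem 7.2 in \cite{da2014stochastic} but adapted to the coupled system. Define the operator $\mc{K}:\mathcal{H}^2(\mfT;H^N)\to\mathcal{H}^2(\mfT;H^N)$ componentwise by
\begin{equation}
(\mc{K}\textbf{x})_i(t) \;:=\; S(t)\xi_i + \int_0^t S(t-r)F_i(r,\textbf{x}(r),u_i(r))\,dr + \int_0^t S(t-r)B_i(r,\textbf{x}(r),u_i(r))\,dW_i(r), \notag
\end{equation}
for each $i\in\mc{N}$, so that a mild solution of \eqref{casd} is precisely a fixed point of $\mc{K}$. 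Because $N$ is fixed and finite, the coupling contributes only a multiplicative constant in the estimates, so the usual scalar-equation architecture carries through once the norm on $H^N$ is handled carefully.

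First I would verify that $\mc{K}$ maps $\mathcal{H}^2(\mfT;H^N)$ into itself. Measurability of each $(\mc{K}\textbf{x})_i$ follows from \Cref{as2} and the measurability assumption on $B_i$, together with standard facts about Bochner and stochastic integrals against $Q$-Wiener processes. For the $\mathcal{H}^2$ bound, split $(\mc{K}\textbf{x})_i(t)$ into its three pieces. The deterministic part is controlled by \eqref{S_bound} and $\xi_i\in L^2(\Omega;H)$. The drift integral is bounded via Cauchy–Schwarz and the linear growth part of \Cref{as5}, using $\|S(t-r)\|_{\mc{L}(H)}\le M_T$. The stochastic integral is handled by the Itô isometry in the Hilbert-space setting,
\begin{equation}
\mb{E}\Bigl|\int_0^t S(t-r)B_i(\cdot)\,dW_i(r)\Bigr|_H^2 = \mb{E}\int_0^t \|S(t-r)B_i(r,\textbf{x}(r),u_i(r))\|_{\mc{L}_2(V_Q,H)}^2\,dr \le M_T^2\,\mb{E}\int_0^t\|B_i\|_{\mc{L}_2}^2\,dr, \notag
\end{equation}
and the growth bound on $B_i$ together with $u_i\in\mc{M}^2(\mfT;U)$ (\Cref{as1}) then yields a finite constant. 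Summing over $i\in\mc{N}$ gives $|\mc{K}\textbf{x}|_{\mathcal{H}^2(\mfT;H^N)}<\infty$.

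Next I would establish the contraction property. For $\textbf{x},\textbf{y}\in\mathcal{H}^2(\mfT;H^N)$, apply the inequality $|a+b|^2\le 2|a|^2+2|b|^2$ to $(\mc{K}\textbf{x})_i(t)-(\mc{K}\textbf{y})_i(t)$, then bound the drift term by Cauchy–Schwarz and the stochastic term by the Itô isometry as above. Using the Lipschitz part of \Cref{as5} produces, for each $i$,
\begin{equation}
\mb{E}\bigl|(\mc{K}\textbf{x})_i(t)-(\mc{K}\textbf{y})_i(t)\bigr|_H^2 \;\le\; 2M_T^2 C^2 (T+1)\,\mb{E}\int_0^t |\textbf{x}(r)-\textbf{y}(r)|_{H^N}^2\,dr. \notag
\end{equation}
Summing over $i\in\mc{N}$ merely multiplies the constant by $N$. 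To close the fixed-point argument I would work with an equivalent weighted norm $|\textbf{x}|_\beta^2:=\sup_{t\in\mfT}e^{-\beta t}\mb{E}|\textbf{x}(t)|_{H^N}^2$ and choose $\beta$ large enough that the resulting contraction constant $NK/\beta$ is strictly less than one; this is equivalent to (and slightly cleaner than) iterating a small-time contraction across $[0,T]$ by dividing $\mfT$ into finitely many subintervals. Banach's fixed-point theorem then gives a unique mild solution in $\mathcal{H}^2(\mfT;H^N)$.

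The main technical subtlety, and hence the step I would pay most attention to, is the stochastic integral estimate: one must use the independent $Q$-Wiener processes $\{W_i\}_{i\in\mc{N}}$ constructed in \Cref{sQ} with respect to the common reduced filtration $\mc{F}^{[N]}$, apply the Itô isometry equation-by-equation (since each $(\mc{K}\textbf{x})_i$ is driven only by $W_i$), and be careful that the integrand $S(t-r)B_i(\cdot)$ takes values in $\mc{L}_2(V_Q,H)$ so that the isometry and the growth/Lipschitz bounds in \Cref{as5} match. Once this is in place, the combination of the semigroup bound \eqref{S_bound}, \Cref{as0} on the initial data, and \Cref{as1}–\Cref{as5} on the coefficients gives both parts of the fixed-point argument and establishes \Cref{enu}.
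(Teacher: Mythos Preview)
Your proposal is correct and follows essentially the same Banach fixed-point architecture as the paper's proof: define the solution operator componentwise, verify it maps $\mathcal{H}^2(\mfT;H^N)$ into itself via the semigroup bound, Cauchy--Schwarz on the drift, and the It\^o-type estimate on the stochastic convolution, then obtain a contraction from the Lipschitz part of \Cref{as5} with a multiplicative factor of $N$. The only cosmetic differences are that the paper cites the stochastic-convolution bound from \cite[Theorem~4.36]{da2014stochastic} (giving an inequality with constant $C'$) rather than the It\^o isometry, and closes by restricting to small subintervals and concatenating, whereas you use the equivalent weighted-norm trick; both choices are standard and interchangeable.
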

\begin{proof}
The existence and uniqueness of a mild solution can be established through the classic fixed-point argument for a mapping from $\mathcal{H}^{2}(\mfT;H^N)$ to $\mathcal{H}^{2}(\mfT;H^N)$.
To this end, for any given element \( \mathbf{x} \in\mathcal{H}^{2}(\mfT;H^N) \), the operator \( \Gamma \) is defined component-wise as 
\[\Gamma \textbf{x}(t)=\left (\Gamma _1\textbf{x}(t),\,\Gamma _2\textbf{x}(t), \ldots,\,  \Gamma _N\textbf{x}(t) \right ),\]
where each component \( \Gamma_i \mathbf{x}(t) \) is represented by the integral equation
\begin{equation} 
\Gamma _i\textbf{x}(t)=S(t)\xi _i+\int_{0}^{t}S(t-r)F_i(r,\textbf{x}(r),u_i(r))dr+\int_{0}^{t}S(t-r)B_i(r,\textbf{x}(r),u_i(r))dW_{i}(r) . \label{Gamma_i_def}  
\end{equation}
We show that \( \Gamma \) indeed maps \( \mathcal{H}^{2}(\mfT;H^N) \) into itself. The measurability of \eqref{Gamma_i_def} as an $H$-valued process is established using the standard argument found in \cite{da2014stochastic} and \cite{gawarecki2010stochastic}, based on our assumptions. This is because $F_i$ and $B_i$ are progressively measurable processes, valued in $H$ and $\mathcal{L}_2(V_Q,H)$, respectively. We use the inequality $\vert a+b+c \vert^2 \leq 3 \vert a\vert^2 + 3 \vert b\vert^2 + 3 \vert c\vert^2$, for each $i \in \mathcal{N}$ and $ t \in \mfT$, to get
\begin{align} \label{Gamma_i_def1}
\mathbb{E} \left | \Gamma _i\textbf{x}(t) \right |^{2}_H \leq\, & 3\mathbb{E} \left |S(t) \xi _i \right |^2_H+3\mathbb{E}\left [ \left|\int_{0}^{t}S(t-r)F_i(r,\textbf{x}(r),u_i(r))dr \right|^{2}_H \right ]\notag\\ &+3\mathbb{E}\left [ \left|\int_{0}^{t}S(t-r)B_i(r,\textbf{x}(r),u_i(r))dW_{i}(r) \right|^{2}_H \right]. 
\end{align}

For the Bochner integral in \eqref{Gamma_i_def1}, we have 
\begin{align}
\mathbb{E}\left [ \left|\int_{0}^{t}S(t-r)F_i(r,\textbf{x}(r),u_i(r))dr \right|^{2}_H \right ]&\leq T \mathbb{E}\left [\int_{0}^{t}\left|S(t-r)F_i(r,\textbf{x}(r),u_i(r)) \right|^{2}_Hdr \right ]\notag\\
&\leq T \mathbb{E}\left [\int_{0}^{t}\left\Vert S(t-r)\right \Vert_{\mathcal{L}(H)}^2\left|F_i(r,\textbf{x}(r),u_i(r)) \right|^{2}_Hdr \right ]\notag\\
&\leq TM_{T}^{2}C^{2}\mathbb{E}\left[\int_{0}^{t}\left(\left|(\textbf{x}(r) \right|_{H^{N}}^{2}+ \left|u_{i}(r) \right|^{2}_U+1\right)dr\right],
 \label{det_int}   
\end{align}
where the first inequality results from the Cauchy–Schwarz inequality.
For the stochastic integral in \eqref{Gamma_i_def1}, we have 
\begin{align}
\mathbb{E}\left [ \left|\int_{0}^{t}S(t-r)B_i(r,\textbf{x}(r),u_i(r))dW_{i}(r) \right|^{2}_H \right ]&\leq C^{'}\mathbb{E}\left [ \int_{0}^{t}\left\|B_i(r,\textbf{x}(r),u_i(r)) \right\|_{\mathcal{L}_2}^{2}dr   \right ]\notag \\ & \leq C^{2}C^{'}\mathbb{E}\left[\int_{0}^{t}\left(\left|\textbf{x}(r) \right|_{H^{N}}^{2}+\left|u_{i}(r) \right|^{2}_U+1\right)dr\right],
\label{stoch_int} 
\end{align}
where the first inequality is obtained by the standard approximation technique for stochastic convolutions (see e.g. \cite[Theorem 4.36]{da2014stochastic} and \cite[Corollary 3.2]{gawarecki2010stochastic}) in the current context for every $t \in \mfT$ and $i \in \mathcal{N}$.
Note that the constant $C^{'}$ only depends on $T$ and $M_T$ (defined in \eqref{S_bound}). Substituting \eqref{det_int} and \eqref{stoch_int} in \eqref{Gamma_i_def1}, we obtain
\begin{align} \label{boundgam}
\mathbb{E} \left | \Gamma _i\textbf{x}(t) \right |^{2}_H &\leq 3\mathbb{E} \left |S(t) \xi _i \right |^2_H+3(C^{2}C^{'}+TM_{T}^{2}C^{2})\mathbb{E}\left[\int_{0}^{t}\left(\left|\textbf{x}(r) \right|_{H^{N}}^{2}+\left|u_{i}(r) \right|^{2}_U+1\right)dr\right] \notag\\& \leq 3\mathbb{E} \left |S(t) \xi _i \right |^2_H+3(C^{2}C^{'}+TM_{T}^{2}C^{2})\mathbb{E}\left[\int_{0}^{T}\left(\left|\textbf{x}(r) \right|_{H^{N}}^{2}+\left|u_{i}(r) \right|^{2}_U+1\right)dr\right],\ \forall t \in \mfT.
\end{align}
Hence, 
\begin{align} \label{gammabou}
\sum_{i\in \mc{N}}\mathbb{E} \left | \Gamma _i\textbf{x}(t) \right |^{2}_H \leq &\,3 M_{T}^{2} \sum_{i\in \mc{N}} \mathbb{E} \left | \xi _i \right |^2_H+3 (C^{2}C^{'}+TM_{T}^{2}C^{2}) \sum_{i\in \mc{N}}\mathbb{E}\left[\int_{0}^{T}\left(\left|u_{i}(r) \right|^{2}_U\right)dr\right]\notag \\ &+3N(C^{2}C^{'}+TM_{T}^{2}C^{2})\mathbb{E}\left[\int_{0}^{T}\left(\left|\textbf{x}(r) \right|_{H^{N}}^{2}+1\right)dr\right],\quad \forall t \in \mfT.
\end{align}
From \eqref{gammabou}, we have 
\[\left|\Gamma \textbf{x}\right|_{\mathcal{H}^{2}(\mfT;H^N)} = \left (\displaystyle \sup_{t \in \mfT} \sum_{i\in \mc{N}}\mathbb{E}\left|\Gamma _i\textbf{x}(t) \right|_{H}^{2}\right )^{\frac{1}{2}} < \infty.\]
Thus, the transformation \( \Gamma \) is well-defined and maps $\mathcal{H}^{2}(\mfT;H^N)$ into itself. The remaining part of the proof is to show that the mapping $\Gamma$ is a contraction, that is, for any two elements \(\textbf{x} , \textbf{y} \in \mathcal{H}^{2}(\mfT;H^N) \), it holds that
\[\left|\Gamma \textbf{y}- \Gamma \textbf{x}\right|_{\mathcal{H}^{2}(\mfT;H^N)} <\left| \textbf{y}- \textbf{x}\right|_{\mathcal{H}^{2}(\mfT;H^N)}.\]

Using the inequlity $\vert a+b \vert^2 \leq 2 \vert a\vert^2 + 2 \vert b\vert^2$, for each $i \in \mathcal{N}$ and $ t \in \mfT$, we can write 
\begin{align} 
\mathbb{E}\left|\Gamma _i\textbf{y}(t)-\Gamma _i\textbf{x}(t) \right|^{2}_H\leq \, & 2 \mathbb{E}\left [ \left|\int_{0}^{t}S(t-r)(F_i(r,\textbf{x}(r),u_i(r))-F_i(r,\textbf{y}(r),u_i(r)))dr \right|^{2}_H \right ]\notag \\ &+2\mathbb{E}\left [ \left|\int_{0}^{t}S(t-r)(B_i(r,\textbf{x}(r),u_i(r))-B_i(r,\textbf{y}(r),u_i(r)))dW_{i}(r) \right|^{2}_H \right ]. \label{approx}  
\end{align}
For the first term on the RHS of \eqref{approx}, $ \forall t \in \mfT$, we obtain 
\begin{align} \label{difF}
\mathbb{E}\left [ \left|\int_{0}^{t}S(t-r)\left(F_i(r,\textbf{x}(r),u_i(r))- F_i(r,\textbf{y}(r),u_i(r))\right)dr \right|^{2}_H \right ] &\hspace{0cm}\leq C^{2}M_{T}^{2}T\mathbb{E}\left [\int_{0}^{T}\left|\textbf{x}(t)- \textbf{y}(t)\right|^{2}_{H^{N}}  dt\right ]\notag \\
 &\hspace{0cm}\leq C^{2}M_{T}^{2}T^{2} \displaystyle \sup_{t \in \mfT} \mathbb{E} \left|\textbf{x}(t)- \textbf{y}(t)\right|^{2}_{H^{N}}\notag\\ &\hspace{0cm}\leq C^{2}M_{T}^{2}T^{2}\left|\textbf{x}-\textbf{y} \right|^{2}_{\mathcal{H}^{2}(\mfT;H^N)}.   
\end{align}
Similarly, for the second term on the RHS of \eqref{approx} we have 
\begin{align} \label{difB}
&\mathbb{E}\left [ \left|\int_{0}^{t}S(t-r)(B_i(r,\textbf{x}(r),u_i(r))-B_i(r,\textbf{y}(r),u_i(r)))dW_{i}(r) \right|^{2}_H \right ] \notag \allowdisplaybreaks\\ &\leq C^{'}\mathbb{E}\left [ \int_{0}^{T}\left\|B_i(r,\textbf{x}(r),u_i(r))-B_i(r,\textbf{y}(r),u_i(r)) \right\|_{\mathcal{L}_2}^{2}dr   \right ] \notag \allowdisplaybreaks\\ & \leq M_{T}^{2}C^{'}C^{2}T(\sup_{t \in \mfT} \mathbb{E} \left|\textbf{x}(t)- \textbf{y}(t)\right|^{2}_{H^{N}}) \notag\allowdisplaybreaks\\
&\leq M_{T}^{2}C^{'}C^{2}T \left|\textbf{x}-\textbf{y} \right|^{2}_{\mathcal{H}^{2}(\mfT;H^N)},\quad \forall t \in \mfT. 
\end{align}
Based on \eqref{approx} to \eqref{difB}, we obtain 
\begin{equation}
\sum_{i\in \mc{N}}\mathbb{E}\left|\Gamma _i\textbf{y}(t)-\Gamma _i\textbf{x}(t) \right|^{2}_H \leq 2 N M_{T}^{2}C^{2}T(C^{'}+T)\left|\textbf{x}-\textbf{y} \right|^{2}_{\mathcal{H}^{2}(\mfT;H^N)} ,\quad \forall t \in \mfT, \notag 
\end{equation}
and subsequently
\begin{align} \label{contrco}
 \left|\Gamma \textbf{y}- \Gamma \textbf{x} \right|^{2}_{\mathcal{H}^{2}(\mfT;H^N)} \leq 2 N M_{T}^{2}C^{2}T(C^{'}+T)\left|\textbf{x}-\textbf{y} \right|^{2}_{\mathcal{H}^{2}(\mfT;H^N)} . 
\end{align}
By employing the same argument as in Theorem 7.2 of \cite{da2014stochastic}, if $T$ is sufficiently small,  then the mapping $\Gamma$ is a contraction. We apply this reasoning on the intervals $\big[ 0,\widetilde{T} \big],\big[ \widetilde{T},2\widetilde{T} \big ], \cdots, \big [(n-1)\widetilde{T},T\big ]$, where $\widetilde{T}$ satisfies \eqref{contrco} and $n\widetilde{T}=T$.     
\end{proof}
\begin{remark} An alternative approach to prove \Cref{enu} could involve formulating the set of $N$ stochastic evolution equations as a single $H^N$-valued equation. This requires defining appropriate operators between the associated spaces and verifying that a valid $H^N$-valued Wiener process can be constructed using $\{W_{i}\}_{i\in \mathcal{N}}$. Such a reformulation also involves technical considerations. Following this, the existence and uniqueness of the solution may be established by adapting existing results related to single stochastic evolution equations.  
\end{remark}

\section{Hilbert Space-Valued LQ Mean Field Games} \label{HLMFG}
\subsection{$N$-Player Game in Hilbert Space}
We consider a differential game in Hilbert spaces defined on $ \big(\Omega, \mfF, \mathcal{F}^{[N]}, \mathbb{P} \big)$, where $\mathcal{F}^{[N]}$ is constructed in \Cref{cabs}. This game involves $N$ asymptotically negligible (minor) agents, whose dynamics are governed by a system of coupled stochastic evolution equations, each given by the linear form of \eqref{casd}. More precisely, the dynamics of a representative agent indexed by $i$, $i \in \mathcal{N}$, are given by 
\begin{align} \label{dyfin}
 dx_i(t)&=(Ax_i(t)+Bu_i(t)+F_1x^{(N)}(t))dt+(Dx_i(t)+Eu_i(t)+F_2x^{(N)}(t)+\sigma) dW_{i}(t) , \\ \notag 
 x_i(0)&=\xi_i, 
\end{align}
where the agents are coupled through the term $x^{(N)}(t):=\frac{1}{N}(\sum_{i\in \mc{N}}x_{i}(t) )$, which represents the average state of the 
$N$ agents. We assume that all agents are homogeneous and share the same operators. Specifically, $F_1 \in \mathcal{L}(H)$, $F_2 \in \mathcal{L}(H;\mathcal{L}(V;H))$ and $\sigma \in \mathcal{L}(V;H) $ and all other operators are as defined in \eqref{dyna}. In addition to \Cref{as0}, we impose the following assumption for the initial conditions.
\begin{assumption}\label{init-cond-iid}The initial conditions $\{\xi_i\}_{i \in \mc{N}}$ are i.i.d. with $\mb{E}[\xi_i]=\bar{\xi}$.
\end{assumption}
\begin{assumption}(Filtration \& Admissible Control)
The filtration available to agent $i$, $i\in \mc{N}$, is $ \mathcal{F}^{[N]}$. Subsequently, the set of admissible control actions for agent $i$, denoted by $\mc{U}^{[N]}$, is defined as the collection of  $\mc{F}^{[N]}$-adapted control laws $u^i$ that belong to $ \mathcal{M}^2(\mfT;U)$.   
\end{assumption}
Clearly, the system described by \eqref{dyfin} satisfies the assumptions \Cref{as1}-\Cref{as5} and its well-posedness is ensured by Theorem \ref{enu}.

Moreover, agent $i,\, i\in \mc{N}$, aims to minimize the cost functional 
\begin{equation} \label{cosfinN}
J^{[N]}_{i}(u_{i},u_{-i})=\mathbb{E}\int_{0}^{T}\left(\left|M^{\frac{1}{2}}\left (x_{i}(t)-\widehat{F}_1x^{(N)}(t)  \right ) \right|^{2}+\left|u_{i}(t) \right|^{2}\right)dt+\mathbb{E}\left|G^{\frac{1}{2}}\left (x_{i}(T)-\widehat{F}_2x^{(N)}(T)  \right ) \right|^{2},  
\end{equation}
where $M$ and $G$  are positive operators on $H$, and $\widehat{F}_1, \widehat{F}_2 \in \mathcal{L}(H)$. We note that a positive operator can be added to the quadratic term associated with the control process in the cost functional. However, this operator must satisfy specific conditions to ensure that its inverse is also a well-defined positive operator. To avoid further complexity in the notation, we use the identity operator in this work.

In general, solving the $N$-player differential game described in this section becomes  challenging, even for moderate values of $N$ and for finite-dimensional cases. The interactions between agents lead to a large-scale optimization problem, where each agent needs to observe the states of all other interacting agents. To address the dimensionality and the information restriction, following the classical MFG methodology, we investigate the limiting problem as the number of agents $N$ tends to infinity. In this limiting model, the average behavior of the agents, known as the mean field, can be mathematically characterized, simplifying the problem. Specifically, in the limiting case, a generic agent interacts with the mean field, rather than a large number of agents. Furthermore, the mean field happens to coincide with the mean state of the agent. In the subsequent sections, we develop the Nash Certainty Equivalence principle and characterize a Nash equilibrium for the limiting game in Hilbert spaces. We then demonstrate that this equilibrium yields an $\epsilon$-Nash equilibrium for the original $N$-player game.

\subsection{Limiting Game in Hilbert Spaces} \label{lah}
In this section we present the limiting game which reflects the scenario where, in system \eqref{dyfin}-\eqref{cosfinN}, the number of agents $N$ tends to infinity. In this case, the optimization problem faced by a representative agent $i$ is described as follows. 
Specifically, the dynamics of a representative agent, indexed by $i$, is given by \begin{align} \label{dylimi-S0}
 dx_i(t)&=(Ax_i(t)+Bu_i(t)+F_1\bar{x}(t))dt+(Dx_i(t)+Eu_i(t)+F_2\bar{x}(t)+\sigma) dW_{i}(t) , \\  \notag 
 x_i(0)&=\xi_i,  
\end{align}
and the cost functional to be minimized by agent $i$ by 
\begin{equation} \label{cosfinlimi-S0}
{J}^\infty_i(u_i)=\mathbb{E}\int_{0}^{T}\left(\left|M^{\frac{1}{2}}\left (x_{i}(t)-\widehat{F}_{1}\bar{x}(t)  \right ) \right|^{2}+\left| u_{i}(t) \right|^{2}\right)dt+\mathbb{E}\left|G^{\frac{1}{2}}\left (x_{i}(T)-\widehat{F}_{2}\bar{x}(T) \right ) \right|^{2},
\end{equation} 
where $\bar{x}(t)$ represents the coupling term in the limit and is termed the mean field. In this context, on the one hand, a Nash equilibrium for the system consists of the best response strategies of the agents to the mean field $\bar{x}(t)$. On the other hand, in the equilibrium where all agents follow Nash strategies, together they replicate the mean field, i.e. $\frac{1}{N}\sum_{i\in \mc{N}}{x}_{i}(t)\stackrel{\text{q.m.}}{\longrightarrow} \bar{x}(t)$.
We impose the following assumption for the limiting problem.  
\begin{assumption}(Filtration \& Admissible Control)\label{Info-set-Adm-Cntrl}
  The filtration $\mathcal{F}^{i,\infty}$ of agent $i$ satisfies the usual conditions and ensures that $W_i$ is a $Q$-Wiener process and that the initial condition $\xi_i$ is $\mathcal{F}^{i,\infty}_0$-measurable. Subsequently, the set of admissible control actions for agent $i$, denoted by $\mc{U}_i$, is defined as the collection of  $\mc{F}^{i,\infty}$-adapted control laws $u^i$ that belong to $ \mathcal{M}^2(\mfT;U)$.  
\end{assumption}

We first, in \Cref{sincon}, treat the interaction term as an input $g \in C(\mfT ;H)$ and solve the resulting optimal control problem for a representative agent given by the dynamics 
\begin{align} \label{dylimi}
 dx_i(t)&=(Ax_i(t)+Bu_i(t)+F_1g(t))dt+(Dx_i(t)+Eu_i(t)+F_2g(t)+\sigma) dW_{i}(t) , \allowdisplaybreaks\\  \notag 
 x_i(0)&=\xi_i,  
\end{align}
and the cost functional 
\begin{equation} \label{cosfinlimi}
J(u_i)=\mathbb{E}\int_{0}^{T}\left(\left|M^{\frac{1}{2}}\left (x_{i}(t)-\widehat{F}_{1}g(t)  \right ) \right|^{2}+\left| u_{i}(t) \right|^{2}\right)dt+\mathbb{E}\left|G^{\frac{1}{2}}\left (x_{i}(T)-\widehat{F}_{2}g(T)  \right ) \right|^{2}.
\end{equation} 
Then, in \Cref{fixp}, we address the consistency condition described by 
\begin{equation} \label{fixpd}
\mathbb{E}\left[x_{i}^{\circ}(t)\right]=g(t),\quad   \forall i \in \mathcal{N},\, t \in \mfT,
\end{equation} 
where $x_{i}^{\circ}$ represents the optimal state of agent $i$ corresponding to the control problem described by \eqref{dylimi}-\eqref{cosfinlimi}.
Finally, in \Cref{eps-Nash}, we show that $\tfrac{1}{N}\sum_{i\in \mc{N}}{x}_{i}^\circ(t)\stackrel{\text{q.m.}}{\longrightarrow}\mathbb{E}[x_{i}^{\circ}(t)]  $. 

Due to the symmetry among all agents, we drop the index $i$ in \Cref{sincon} and \Cref{fixp}, where we discuss the optimal control problem of individual agents and the relevant fixed-point problem. However, in \Cref{Nash}, where Nash equilibrium is discussed, we use the index $i$ to effectively distinguish between agents by their independent trajectories and respective filtrations. Before addressing the limiting problem in these sections, we will introduce, in the next section, some mappings and their Riesz representations that are essential for the discussions.

\subsubsection{Mappings Associated with Riesz Representations}\label{reizo}
In this section, we introduce multiple mappings and their associated Riesz representations that will be used throughout the remainder of the paper. These mappings are the same as those defined in \cite{ichikawa1979dynamic}. However, since the solution of the limiting problem heavily relies on these mappings, we include more details here. We note that in the special case where the state and control processes do not appear in the diffusion coefficient of each agent, these mappings are not required for the analysis.

Recall that $Q$ is a positive trace class operator on the Hilbert space $V$. For any $\mathcal{R} \in \mathcal{L}(H)$, it can be easily verified that the following expressions are bounded bilinear functionals on their corresponding product spaces:
\begin{align*}
&\mathrm{tr}((Eu)^* \mathcal{R}(Dx) Q),\quad \forall x\in H, u \in U, \\
&\mathrm{tr}((Dx)^* \mathcal{R}(Dy) Q), \quad \forall x, y \in H, \\
&\mathrm{tr}((Eu)^* \mathcal{R}(Ev) Q), \quad \forall u, v \in U.   
\end{align*}
Moreover, for any $ \mathcal{P} \in \mathcal{L}(H;V)$, the expressions below are bounded linear functionals on $H$ and $U$, respectively:
\begin{align*}
&\mathrm{tr}(\mathcal{P}\left ( D x\right )Q), \quad \forall x \in H, \allowdisplaybreaks\\
&\mathrm{tr}(\mathcal{P}\left ( Eu \right )Q),\quad \forall u \in U.
\end{align*}
\begin{definition}(Riesz Mappings)\label{Riesz-Rep}
Using the Riesz representation theorem the mappings $\Delta_1: \mathcal{L}(H) \rightarrow \mathcal{L}(H; U) $, $\Delta_2: \mathcal{L}(H) \rightarrow \mathcal{L}(H)$ and $\Delta_3: \mathcal{L}(H) \rightarrow \mathcal{L}(U)$ are defined such that 
\begin{align*}
\mathrm{tr}((Eu)^* \mathcal{R}(Dx) Q)&=\langle \Delta_1(\mathcal{R})x, u \rangle, \quad \forall x \in H, \, \forall u \in U,\quad \Delta_1(\mathcal{R}) \in \mathcal{L}(H; U),\\
\mathrm{tr}((Dx)^* \mathcal{R}(Dy) Q)&=\langle \Delta_2(\mathcal{R})x, y \rangle, \quad \forall x, y \in H, \qquad \quad \,\,\,\, \Delta_2(\mathcal{R}) \in \mathcal{L}(H),\\
\mathrm{tr}((Eu)^* \mathcal{R}(Ev) Q)&=\langle \Delta_3(\mathcal{R})u, v \rangle, \quad \forall u, v \in U, \qquad \quad \,\,\,\,\, \Delta_3(\mathcal{R}) \in \mathcal{L}(U).
\end{align*}
Similarly, the mappings $\Gamma_{1}:\mathcal{L}(H;V) \rightarrow H$ and $\Gamma_{2}: \mathcal{L}(H;V) \rightarrow U$ are defined such that 
\begin{align*}
\mathrm{tr}(\mathcal{P}\left ( D x\right )Q)&=\left<\Gamma_1(\mathcal{P}), x \right>, \quad \forall x \in H,\quad \Gamma_1(\mathcal{P}) \in H,\allowdisplaybreaks\\
\mathrm{tr}(\mathcal{P}\left ( Eu \right )Q)&= \left<\Gamma_2(\mathcal{P}), u \right>, \quad \forall u \in U, \quad \Gamma_2(\mathcal{P}) \in U.
\end{align*}
\end{definition}
In the following proposition, we establish the linearity and boundness of the introduced Riesz mappings. 
\begin{theorem} \label{reizlb}
The mappings $\Delta_k, k=1,2,3$, and $\Gamma_k, k=1,2$, are linear and bounded. Specifically, we have
\begin{align}
 \Gamma_1 \in \mathcal{L}(\mathcal{L}(H;V) ; H) \quad &\text{and}  \quad \left\|\Gamma _1\right\|\leq R_1\quad \text{with}\quad R_1=\mathrm{tr}\left (Q  \right )\left\|D \right\|, \allowdisplaybreaks\\
 \Gamma_2 \in \mathcal{L}(\mathcal{L}(H;V) ;  U) \quad &\text{and}\quad \left\|\Gamma _2\right\|\leq R_2 \quad \text{with}\quad R_2=\mathrm{tr}\left (Q  \right )\left\|E \right\|,\allowdisplaybreaks\\
 \Delta_1 \in \mathcal{L}({\mathcal{L}(H); \mathcal{L}(H;U)}) \quad &\text{and}\quad \left\|\Delta _1\right\|\leq R_3 \quad \text{with}\quad  R_3=\mathrm{tr}\left (Q  \right )\left\|D \right\|\left\|E \right\|,\label{R3}\\
 \Delta_2 \in \mathcal{L}({\mathcal{L}(H); \mathcal{L}(H)}) \quad &\text{and}\quad \left\|\Delta _2\right\|\leq R_{4} \quad \text{with}\quad R_{4}=\mathrm{tr}\left (Q  \right )\left\|D \right\|^2,\\
 \Delta_3 \in \mathcal{L}({\mathcal{L}(H); \mathcal{L}(U)}) \quad &\text{and}\quad \left\|\Delta _3\right\|\leq R_{5} \quad \text{with}\quad R_{5}=\mathrm{tr}\left (Q  \right )\left\|E \right\|^2.
 \end{align}
 Moreover, for any positive operator $\mathcal{R} \in \mathcal{L}(H)$ we have
 \begin{equation} \label{R6}
 \left \| (I+\Delta_3(\mathcal{R}))^{-1}\left (B^{\ast}\mathcal{R}+\Delta_1(\mathcal{R})  \right ) \right \| \leq R_6 \left \| \mathcal{R} \right \|, \quad \text{with} \quad R_{6}=\left\|B \right\|+R_{3}. 
 \end{equation}
\end{theorem}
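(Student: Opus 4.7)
The plan is to read off linearity and all stated bounds directly from the defining identities in \Cref{Riesz-Rep}, using two standard trace-class facts: (i) $|\mathrm{tr}(T)|\leq\|T\|_{\mathcal{L}_1(V)}$ for every $T\in\mathcal{L}_1(V)$, and (ii) $\|ST\|_{\mathcal{L}_1(V)}\leq\|S\|_{\mathcal{L}(V)}\|T\|_{\mathcal{L}_1(V)}$ for $S\in\mathcal{L}(V)$ and $T\in\mathcal{L}_1(V)$. Combined with the remark already in the text that $\mathrm{tr}(Q)=\|Q\|_{\mathcal{L}_1(V)}$ (since $Q$ is positive), these suffice. Linearity of each $\Delta_k$ and $\Gamma_k$ is immediate: the trace depends linearly on the operator $\mathcal{R}$ or $\mathcal{P}$ inside it, and uniqueness of the Riesz representative transfers this linearity to the map itself. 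For instance, $\mathrm{tr}((Eu)^\ast(\alpha\mathcal{R}_1+\beta\mathcal{R}_2)(Dx)Q)=\alpha\,\mathrm{tr}((Eu)^\ast\mathcal{R}_1(Dx)Q)+\beta\,\mathrm{tr}((Eu)^\ast\mathcal{R}_2(Dx)Q)$ forces $\Delta_1(\alpha\mathcal{R}_1+\beta\mathcal{R}_2)=\alpha\Delta_1(\mathcal{R}_1)+\beta\Delta_1(\mathcal{R}_2)$, and the remaining maps are treated identically.

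For the norm estimates I would use $\Gamma_1$ as a template. Combining (i)--(ii) with submultiplicativity of the operator norm gives $|\langle\Gamma_1(\mathcal{P}),x\rangle_H|=|\mathrm{tr}(\mathcal{P}(Dx)Q)|\leq\|\mathcal{P}(Dx)\|_{\mathcal{L}(V)}\,\mathrm{tr}(Q)\leq\mathrm{tr}(Q)\|D\|\|\mathcal{P}\||x|_H$; taking the supremum over unit $x\in H$ yields $\|\Gamma_1\|\leq\mathrm{tr}(Q)\|D\|=R_1$. The bounds for $\Gamma_2$, $\Delta_1$, $\Delta_2$, $\Delta_3$ follow in exactly the same way: factor the operator inside the trace as a composition whose $\mathcal{L}(V)$-norm is controlled by the appropriate product of $\|D\|$, $\|E\|$, $\|\mathcal{R}\|$, peel off one factor $\mathrm{tr}(Q)$, and then convert the resulting bilinear estimate on $H\times H$, $H\times U$ or $U\times U$ into an operator-norm bound by saturating the free argument over unit vectors.

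The inequality \eqref{R6} is the only step that requires something beyond a direct trace estimate, and it is the main (small) obstacle. I would first show that for every positive $\mathcal{R}\in\mathcal{L}(H)$ the operator $\Delta_3(\mathcal{R})\in\mathcal{L}(U)$ is itself positive. Indeed, setting $v=u$ in its definition and using cyclicity of the trace together with $Q=Q^{1/2}Q^{1/2}$ and $\mathcal{R}=\mathcal{R}^{1/2}\mathcal{R}^{1/2}$ gives
\begin{equation*}
\langle\Delta_3(\mathcal{R})u,u\rangle_U=\mathrm{tr}\bigl((Eu)^\ast\mathcal{R}(Eu)Q\bigr)=\mathrm{tr}\bigl(T_u^\ast T_u\bigr)\geq 0,\qquad T_u:=\mathcal{R}^{1/2}(Eu)Q^{1/2},
\end{equation*}
where $T_u$ is Hilbert--Schmidt from $V$ to $H$ because $Q^{1/2}$ is Hilbert--Schmidt. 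Consequently $I+\Delta_3(\mathcal{R})\geq I$, so $(I+\Delta_3(\mathcal{R}))^{-1}\in\mathcal{L}(U)$ exists and satisfies $\|(I+\Delta_3(\mathcal{R}))^{-1}\|\leq 1$. Combining this with submultiplicativity, the identity $\|B^\ast\|=\|B\|$, and the already-established bound \eqref{R3} then yields
\begin{equation*}
\bigl\|(I+\Delta_3(\mathcal{R}))^{-1}\bigl(B^\ast\mathcal{R}+\Delta_1(\mathcal{R})\bigr)\bigr\|\leq\|B\|\|\mathcal{R}\|+R_3\|\mathcal{R}\|=R_6\|\mathcal{R}\|,
\end{equation*}
which is the desired estimate and closes the proof plan.
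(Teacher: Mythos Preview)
Your proposal is correct and follows essentially the same approach as the paper: both arguments read off linearity from linearity of the trace plus uniqueness of the Riesz representative, both bound each map via $|\mathrm{tr}(SQ)|\leq\|S\|_{\mathcal{L}(V)}\|Q\|_{\mathcal{L}_1(V)}=\|S\|_{\mathcal{L}(V)}\,\mathrm{tr}(Q)$, and both obtain \eqref{R6} by first observing that $\Delta_3(\mathcal{R})$ is positive so that $\|(I+\Delta_3(\mathcal{R}))^{-1}\|\leq 1$. The only cosmetic differences are that the paper works the template on $\Delta_1$ rather than $\Gamma_1$, and that you spell out the positivity of $\Delta_3(\mathcal{R})$ via $\mathrm{tr}(T_u^\ast T_u)\geq 0$ where the paper simply asserts it.
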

\begin{proof}
We present the demonstration only for the Riesz mapping $\Delta_1$ and the demonstrations for other Riesz mappings follow by a similar argument. To verify the linear property, consider $\mathcal{R}_{1}, \mathcal{R}_{2} \in \mathcal{L}(H)$ and $a, b \in \mathbb{R}$. For all $x \in H$ and $u \in U$, it is straightforward to check that
\begin{equation}
\mathrm{tr}\big((Eu)^* (a\mathcal{R}_{1}+b\mathcal{R}_{2})(Dx) Q\big)=a\,\mathrm{tr}\big((Eu)^* (\mathcal{R}_1)(Dx) Q\big)+b\,\mathrm{tr}\big((Eu)^* (\mathcal{R}_2)(Dx) Q\big).
\end{equation}
Thus, for all $x \in H$ and $u \in U$, we have
\begin{equation}
\langle \Delta_1(a\mathcal{R}_{1}+b\mathcal{R}_{2})x, u \rangle=\langle (a \Delta_1(\mathcal{R}_{1})+b \Delta_1(\mathcal{R}_{2}))x, u \rangle,
\end{equation}
from which we conclude that 
$\Delta_1(a\mathcal{R}_{1}+b\mathcal{R}_{2})=a \Delta_1(\mathcal{R}_{1})+b \Delta_1(\mathcal{R}_{2})$. Next, by simple calculations, for all $x 
\in H$, for all $u \in U$, and $\mathcal{R} \in \mathcal{L}(H)$, we have 
\begin{equation}
\left|\mathrm{tr}((Eu)^* \mathcal{R}(Dx) Q) \right| \leq \left\| (Eu)^* \mathcal{R}(Dx)\right\|_{\mathcal{L}(V,H)}\left\|Q \right\|_{\mathcal{L}_1(V)} \leq R_3 \left \| \mathcal{R} \right \| \left|x \right|_H\left| u\right|_U. 
\end{equation} 
Thus, by the Riesz representation theorem, we have
\begin{equation}
\left \|\Delta_1 (\mathcal{R})  \right \|=\sup_{\left | x \right |_H=1,\left | u \right |_U=1} \left |\mathrm{tr}\big((Eu)^* \mathcal{R}(Dx) Q\big) \right | \leq R_3 \left \| \mathcal{R} \right \|,
\end{equation}
which implies that $\left\|\Delta _1\right\|\leq R_3$. For the second part, we can easily verify that if $\mathcal{R}$ is a positive operator on $H$, then $\Delta_3(\mathcal{R})$ is also a positive operator on $U$. Consequently, it follows that $\left \| (I+\Delta_3(\mathcal{R}))^{-1} (t)\right \|\leq 1, \forall t \in \mfT$. Thus, we have
\begin{equation}
  \left \| (I+\Delta_3(\mathcal{R}))^{-1}\left (B^{\ast}\mathcal{R}+\Delta_1(\mathcal{R})  \right ) \right \| \leq  \left \| B^{\ast}\mathcal{R}+\Delta_1(\mathcal{R})  \right \|\leq R_6  \left \| \mathcal{R} \right \|.
\end{equation}
\end{proof}  
\subsubsection{Optimal Control of Individual Agents}\label{sincon}
In this section, we address the optimal control problem for a representative agent described by \eqref{dylimi}-\eqref{cosfinlimi}.  Infinite-dimensional LQ optimal control problems have been studied in works such as \cite{ichikawa1979dynamic, tessitore1992some, hu2022stochastic}.
We address our specific problem by presenting the results in a compact and self-contained manner, relying on the existing literature. Due to the symmetry among all agents, we drop the index $i$.

\begin{theorem}[Optimal Control Law]  Consider the mappings $\Delta_k, k=1,2,3,$ and $\Gamma_k, k=1,2,$ given in \Cref{Riesz-Rep}, and suppose \Cref{init-cond-iid} holds. Then, the optimal control law $u^{\circ}$ for the Hilbert-space valued system described by \eqref{dylimi}-\eqref{cosfinlimi} is given by 
\begin{equation} \label{opticon}
u^{\circ}(t) = -K^{-1}(T-t)\left[L(T-t)x(t) + \Gamma_2\big(p^*(t)\Pi(T-t)\big) +B^*q(T-t) \right],
\end{equation}
where 
\begin{equation}\label{Coef-thm}
K(t) = I + \Delta_3(\Pi(t)), \quad L(t) = B^{\ast}\Pi(t) + \Delta_1(\Pi(t)),\quad p(t)=F_2g(t)+\sigma,
\end{equation}
with $\Pi \in C_s(\mfT; \mathcal{L}(H))$, such that $\Pi(t)$ is a positive operator $\forall t \in \mfT $, and $q \in C(\mfT ;H)$, each satisfying, respectively, the operator differential Riccati equation and the linear evolution equation, given by 
\begin{align}  
&\frac{d}{dt}\left<\Pi(t)x, x\right> = 2\left<\Pi(t)x, Ax\right> - \left<L^{\ast}(t)K^{-1}(t)L(t)x, x\right> + \left<\Delta_2(\Pi(t))x, x\right> + \left<Mx, x\right>, \notag \\
& \qquad \quad \,\,\, \Pi(0) = G, \quad x \in \mathcal{D}(A), \label{rica} \\\hspace{-2cm}
&\dot{q}(t) = \left(A^* - L^*(t)K^{-1}(t)B^*\right)q(t) + \Gamma_1\big(p^*(T-t) \Pi(t)\big) - L^*(t)K^{-1}(t)\Gamma_2\big(p^*(T-t) \Pi(t)\big)\notag \\ &\,\,\,\,\qquad + (\Pi(t)F_1 - M\widehat{F}_{1})g(T-t),  
\quad q(0) = -G\widehat{F}_{2}g(T).\label{qt}
\end{align}
\end{theorem}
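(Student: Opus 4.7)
My plan is a verification argument based on the quadratic--affine value function ansatz
\begin{equation*}
V(t,x) \;=\; \bigl\langle \Pi(T-t)x,\, x \bigr\rangle \;+\; 2\bigl\langle q(T-t),\, x \bigr\rangle \;+\; \phi(t),
\end{equation*}
with a scalar $\phi$ chosen so that $V(T,x)$ matches the terminal cost $\bigl|G^{1/2}(x-\widehat{F}_{2}g(T))\bigr|^{2}$, which pins down $\Pi(0)=G$ and $q(0)=-G\widehat{F}_{2}g(T)$. Inserting this ansatz into the formal HJB equation for \eqref{dylimi}--\eqref{cosfinlimi} and expanding the diffusion trace via the Riesz identifications of \Cref{Riesz-Rep} (with $p(t)=F_{2}g(t)+\sigma$), the $u$-dependent part collapses to
\begin{equation*}
\bigl\langle K(T-t)u,\, u \bigr\rangle \;+\; 2\bigl\langle L(T-t)x + B^{*}q(T-t) + \Gamma_{2}\bigl(p^{*}(t)\Pi(T-t)\bigr),\, u\bigr\rangle,
\end{equation*}
with $K,L$ as in \eqref{Coef-thm}. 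Since $K(t)=I+\Delta_{3}(\Pi(t))\succeq I$ whenever $\Pi(t)$ is positive, the infimum is attained uniquely at exactly the feedback \eqref{opticon}. Substituting this optimizer back into HJB and separating the $x$-quadratic, $x$-linear and $x$-free terms produces the Riccati equation \eqref{rica} (from the $x$-quadratic part), the inhomogeneous linear equation \eqref{qt} for $q$ (from the $x$-linear part, where $\Gamma_{1}(p^{*}\Pi)$ and $\Pi F_{1}g - M\widehat{F}_{1}g$ arise from the cross terms between the noise inhomogeneity $p$ and the drift inhomogeneity $F_{1}g$), and an ODE for $\phi$ (which can be read off but plays no role in the optimizer).

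The rigorous verification would proceed by establishing the completion-of-squares identity
\begin{equation*}
J(u) \;=\; V(0,\xi) \;+\; \mathbb{E}\!\int_{0}^{T} \bigl\langle K(T-t)\bigl(u(t)-u^{\circ}(t)\bigr),\, u(t)-u^{\circ}(t)\bigr\rangle \, dt,
\end{equation*}
which, thanks to $K(T-t)\succeq I$, immediately yields that $u^{\circ}$ is the unique minimizer in the admissible class. To obtain this identity I would apply It\^o's formula to $V(t,x(t))$, use \eqref{rica}--\eqref{qt} to cancel all $x$-dependent non-control terms, and integrate from $0$ to $T$. Because $A$ is unbounded and $x$ is only a mild solution of \eqref{dylimi}, I cannot apply It\^o directly; instead I would introduce the Yosida approximations $A_{n}=nA(nI-A)^{-1}$, work with the strong solutions $x_{n}$ of the approximating SDE, apply the classical It\^o formula for strong solutions, and pass to the limit using the continuity $\Pi\in C_{s}(\mfT;\mathcal{L}(H))$, $q\in C(\mfT;H)$, together with the convergence of mild solutions under Yosida approximation (cf.\ the approximation techniques behind \cite[Theorem 4.36]{da2014stochastic} and used already in \eqref{stoch_int}).

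For the existence and regularity of the coefficient $\Pi$, I would treat \eqref{rica} in mild form as a nonlinear operator integral equation and run a local contraction in $C_{s}(\mfT;\mathcal{L}(H))$, using the boundedness and Lipschitz properties of $\Delta_{1},\Delta_{2},\Delta_{3}$ established in \Cref{reizlb}; global existence and positivity would follow from an a priori bound obtained by comparing with the cost $J$ evaluated at any fixed admissible control (which furnishes a supersolution). Once $\Pi$ is in hand, \eqref{qt} is a linear mild evolution equation with a bounded time-dependent perturbation $-L^{*}(t)K^{-1}(t)B^{*}$ of $A^{*}$ and an inhomogeneity in $C(\mfT;H)$, solvable by Duhamel's formula against the semigroup generated by $A^{*}$. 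The main obstacle is the It\^o step: making the Yosida passage to the limit compatible with both the multiplicative noise and the time-dependent unbounded generator requires careful uniform $L^{2}$-estimates, and the many cross terms created by the inhomogeneities $F_{1}g$ and $p=F_{2}g+\sigma$ must be regrouped using precisely the Riesz identifications $\Gamma_{1},\Gamma_{2}$ so that they match the structure of \eqref{qt} exactly; any miscounting here would produce a spurious extra term in the $q$-equation and break the completion of squares.
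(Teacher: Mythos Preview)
Your proposal is correct and follows essentially the same route as the paper: the rigorous core is precisely the completion-of-squares identity obtained by applying It\^o's formula to $\langle\Pi(T-t)x_n(t),x_n(t)\rangle+2\langle q(T-t),x_n(t)\rangle$ for the Yosida-approximated strong solutions and then passing to the limit, exactly as you describe. The only notable difference is that the paper delegates well-posedness of the operator Riccati equation \eqref{rica} to \cite{hu2022stochastic} rather than running the local contraction plus a priori bound argument you sketch, and your HJB ansatz is used only as heuristic motivation whereas the paper proceeds directly to the verification.
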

\begin{proof}
Similar to finite-dimensional LQ control problems, 
the optimal control law involves a Riccati equation but in the operator form, and an offset equation which is an $H$-valued deterministic evolution equation. 
Consider the operator differential Riccati equation given by \eqref{rica}. We refer to  \cite{hu2022stochastic} for the existence and uniqueness of the solution $\Pi(t)$ to \eqref{rica}, as our problem falls within the framework studied in \cite{hu2022stochastic}. Specifically, from \eqref{milds2}, the mild solution of \eqref{dylimi} is in the same form as that of \cite[eq. (2.1)]{hu2022stochastic}. Note that the deterministic terms in the model \eqref{dylimi}-\eqref{cosfinlimi}, i.e. $F_1g(t)$, $F_2g(t)+\sigma$, $\widehat{F}_{1}g(t)$ and $\widehat{F}_{2}g(T)$, do not affect the Riccati equation \eqref{rica}. Moreover, it can be easily verified that $D_j$ and $E_j$ introduced in \Cref{alt-mild-sol} satisfy the conditions specified by \cite[eq. (2.3)]{hu2022stochastic} and \cite[eq. (2.5)]{hu2022stochastic}, respectively. The solution \( \Pi(t) \) is a positive operator on \( H \) for each \( t \in \mfT \), and is strongly continuous on \( \mfT \). Moreover, it is uniformly bounded over the interval \( \mfT \), such that \( \|\Pi(t)\|_{\mathcal{L}(H)} \leq C \) for all \( t \in \mfT \). For the case where $E=0$ in \eqref{dylimi}, we refer to works such as \cite{ichikawa1979dynamic}, \cite{tessitore1992some} and \cite{da1984direct}.
Next, consider the (deterministic) linear evolution equation given by \eqref{qt}.
Given that $\Pi(t)$ and $F_2g(t)+\sigma$ are bounded on the interval $\mfT$, the terms $\Gamma_1((F_2g(T-t)+\sigma)^* \Pi(t))$ and $\Gamma_2((F_2g(T-t)+\sigma)^* \Pi(t))$ are also bounded over $\mfT$. Consequently, the existence and uniqueness of a mild solution to \eqref{qt} follow from the established results for linear evolution equations \cite{diagana2018semilinear}. Specifically, this solution lies within the space $C(\mfT ;H)$.

Now, we begin to solve the corresponding control problem, described by \eqref{dylimi}–\eqref{cosfinlimi}. The mild solution of \eqref{dyna} is expressed as
\begin{equation}  \label{singlemild}
 x(t)=S(t)\xi +\int_{0}^{t}S(t-r)(Bu(r)+F_1g(r))dr+\int_{0}^{t}S(t-r)(Dx(r)+Eu(r)+F_2g(r)+\sigma) dW(r). 
\end{equation}
We introduce a standard approximating sequence given by
\begin{align} \label{yosida}
 dx_n(t)&=(Ax_n(t)+J_n(Bu(t)+F_1g(t)))dt+J_n(Dx_n(t)+Eu(t)+F_2g(t)+\sigma)dW(t) , \notag \\
 x_n(0)&=J_n \xi,
\end{align}
where $J_n=nR(n,A)$, with $R(n,A)=(A-nI)^{-1}$ being the resolvent operator of $A$, is the Yosida approximation of $A$. For more details on Yosida approximation we refer to \cite{da2014stochastic,fabbri2017stochastic}.

Then, the following two standard results hold for \eqref{singlemild} and \eqref{yosida} (see e.g. \cite{ichikawa1982stability, ichikawa1979dynamic}). Firstly, the approximating SDE \eqref{yosida} admits a strong solution, represented as
\begin{equation} \label{sso}
x_n(t)=J_n \xi+\int_{0}^{t}\big(Ax_n(r)+J_n(Bu(r)+F_1 g(r))\big)dr+\int_{0}^{t}J_n(Dx_n(r)+Eu(r)+F_2g(r)+\sigma)dW(r).
\end{equation} 
This means that for each $n$, there exists an adapted process $x_n$ that satisfies the integral form of the approximating SDE almost surely for all $t$ in the interval $\mfT$. Secondly, the sequence of solutions $\{ x_n \}_{n \in \mathbb{N}}$ converges to the mild solution $x$ of the original SDE in the mean square sense uniformly over the interval $\mfT$, i.e.
\begin{equation} \label{convsm}
\lim_{n \to \infty }  \sup_{0\leq t\leq T} \mathbb{E}\left|x_{n}(t)-x(t) \right|^{2}=0   . 
\end{equation}
 
The rest of the proof follows the standard methodology as in \cite{ichikawa1979dynamic}, summarized in the following two steps:
\begin{itemize}
    \item[(i)] Apply Itô's lemma (\cite[Theorem 2.1]{ichikawa1979dynamic}) to $(\langle \Pi(T-t)x_n(t), x_n(t) \rangle + 2 \langle q(T-t), x_n(t) \rangle)$, integrate from $0$ to $T$, and substitute the corresponding terms using \eqref{rica}-\eqref{qt}, \eqref{sso} and \Cref{Riesz-Rep}. Then take the expectation of both sides of the resulting equation.
    \item[(ii)] Take the limit as $n\rightarrow \infty$ of both sides of the expression derived in step (i) and use the convergence property \eqref{convsm}.
\end{itemize}
Note that, compared to finite-dimensional LQ control problems (see e.g. \cite[Section 6.6]{yong1999stochastic}), we must additionally implement step (ii). This is necessary because, in general, Itô's lemma applies only to the strong solutions of infinite-dimensional stochastic equations. 

Finally, by some standard algebraic manipulations, we obtain 
\begin{align*} 
 J(u) &=\mathbb{E}\left<\Pi(T)\xi,\xi \right>+2\mathbb{E}\left<q(T),\xi \right>   + 2\left<G\widehat{F}_{2}g(T), \widehat{F}_{2} g(T) \right>+ \mathbb{E}\Bigg[\int_{0}^{T}\Big|K^{\frac{1}{2}}(T-t)\big[u(t)\notag\\ & \,\,\,+ K^{-1}(T-t)L(T-t)x(t) \notag + K^{-1}(T-t)\left(B^*q(T-t) + \Gamma_2((F_2g(t)+\sigma)^*\Pi(T-t))\right)\big]\Big|^{2}dt\Bigg]\notag \\ &\,\,\,+ \int_{0}^{T} \Bigg[\mathrm{tr}\left((F_2g(t)+\sigma)^*\Pi(T-t)(F_2g(t)+\sigma) Q\right) + \left<M\widehat{F}_{1}g(t),\widehat{F}_{1}g(t) \right> + 2\left\langle q(T-t),F_1g(t) \right\rangle \notag \\\quad &\,\,\,-\left|K^{-\frac{1}{2}}(T-t)\left(B^*q(T-t) + \Gamma_2((F_2g(t)+\sigma)^*\Pi(T-t))\right)\right|^{2}\Bigg]dt. 
\end{align*}
Note that the above equation holds for any initial condition $\xi$ in  $L^2(\Omega;H)$. Therefore, the optimal control law is given by \eqref{opticon}-\eqref{Coef-thm}.
\end{proof}
\subsubsection{Fixed-Point Problem} \label{fixp}
In this section, we address the fixed-point problem described in Section \ref{lah}. From \eqref{dylimi} and under the optimal control given by \eqref{opticon}, the optimal state satisfies
\begin{align} \label{opstate}
x^{\circ}(t) &= S(t)\xi - \int_{0}^{t} S(t-r) \big(BK^{-1}(T-r)L(T-r)x^{\circ}(r) + BK^{-1}(T-r)B^{\ast}q(T-r) + \psi(r)\big) \, dr \notag \\
&\quad + \int_{0}^{t} S(t-r) \big((D - EK^{-1}(T-r)L(T-r))x^{\circ}(r) - EK^{-1}(T-r)B^{\ast}q(T-r) + \phi(r)\big) \, dW(r),
\end{align}

where
\begin{align} 
&\psi(t) = BK^{-1}(T-t)\Gamma_2((F_2g(t)+\sigma)^{\ast} \Pi(T-t))-F_1g(t),\notag \allowdisplaybreaks\\ 
&\phi(t)= -EK^{-1}(T-t)\Gamma_2((F_2g(t)+\sigma)^{\ast} \Pi(T-t))+F_2g(t)+\sigma.
\end{align}
By taking the expectation of both sides in \eqref{opstate}, we obtain the linear evolution equation 
\begin{equation} \label{nt}
\mb{E}\left [x^{\circ}(t) \right ]= S(t)\xi- \int_{0}^{t}S(t-r)(BK^{-1}(T-r)L (T-r)\mb{E}\left [x^{\circ}(r) \right ]+BK^{-1}(T-t) B^{\ast}q(T-t)+\psi (r))dr,
\end{equation}
which admits a unique mild solution in $C(\mfT ;H)$ for any input $g\in C(\mfT ;H)$, using the same argument as that used for \eqref{qt}. Therefore,
\eqref{nt} defines the mapping 
\begin{equation}
\Upsilon: g \in C(\mfT ;H) \longrightarrow \mb{E}[x^\circ(.)] \in C(\mfT ;H).  \label{fixed-point-mapping}
\end{equation}
We show that the mapping $\Upsilon$ admits a unique fixed point. For this purpose, we first establish bounds on $\mfT$ for relevant operators and processes that appear in the mapping $\Upsilon$ characterized by \eqref{nt}.
We start with the operator $\Pi(t)$ which satisfies \eqref{rica}. The following lemma establishes a uniform bound for $\Pi(t)$ across $\mfT$. 
\begin{proposition}[Bound of $\Pi$]\label{Riccati-Bound}
Let $\Pi \in C_s(\mfT; \mathcal{L}(H))$ be the unique solution of the operator differential Riccati equation \eqref{rica}, then we have
\begin{gather}
\left \| \Pi(t) \right \|_{\mathcal{L}(H)} \leq C_1,\quad \forall t \in \mfT,\\ 
C_1:=2M_{T}^{2}\mathrm{exp}(8TM_{T}^{2}\left\|D \right\|^2\mathrm{tr}(Q)) \left ( \left \| G \right \|+T\left \| M \right \|  \right ). \label{eq:pi-bound}
\end{gather}
\end{proposition}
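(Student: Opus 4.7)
The plan is to derive a scalar Grönwall inequality for $\|\Pi(t)\|_{\mathcal{L}(H)}$ starting from the mild (integral) form of the Riccati equation \eqref{rica}. First I would argue that the weak formulation of \eqref{rica} on $\mathcal{D}(A)$ is equivalent, in the weak operator sense, to the mild identity
\begin{equation*}
\Pi(t) = S^{\ast}(t)\,G\,S(t) + \int_0^t S^{\ast}(t-s)\bigl[M + \Delta_2(\Pi(s)) - L^{\ast}(s)K^{-1}(s)L(s)\bigr]S(t-s)\,ds.
\end{equation*}
The verification is standard: differentiating the right-hand side against test vectors $x \in \mathcal{D}(A)$ and using the invariance $S(t)\mathcal{D}(A)\subset \mathcal{D}(A)$ together with $AS(t)x = S(t)Ax$ recovers the $2\langle \Pi(t)x, Ax\rangle$ term appearing in \eqref{rica}, and the pointwise quadratic nonlinearity $-L^{\ast}K^{-1}L + \Delta_2(\Pi) + M$ matches the remaining summands.

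Next I would exploit positivity to replace the nonlinear equality by a linear majorant. Since $\Pi(s)\geq 0$ by hypothesis, the identity $\langle \Delta_2(\Pi(s))x,x\rangle = \mathrm{tr}((Dx)^{\ast}\Pi(s)(Dx)Q)$ together with $Q\geq 0$ gives $\Delta_2(\Pi(s))\geq 0$; moreover $K(s) = I + \Delta_3(\Pi(s)) \geq I$, so $L^{\ast}(s)K^{-1}(s)L(s)\geq 0$; and $G,M\geq 0$ by assumption. Dropping the nonpositive $-L^{\ast}K^{-1}L$ term yields the operator inequality
\begin{equation*}
0 \;\leq\; \Pi(t) \;\leq\; S^{\ast}(t)\,G\,S(t) + \int_0^t S^{\ast}(t-s)\bigl[M + \Delta_2(\Pi(s))\bigr]S(t-s)\,ds.
\end{equation*}
Both sides being positive, taking the operator norm preserves the inequality. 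Using the semigroup bound \eqref{S_bound}, the elementary estimate $\|S^{\ast}(t)\,Z\,S(t)\|_{\mathcal{L}(H)} \leq M_T^{2}\|Z\|_{\mathcal{L}(H)}$, and the bound $\|\Delta_2(\Pi(s))\|_{\mathcal{L}(H)} \leq \mathrm{tr}(Q)\|D\|^2\|\Pi(s)\|_{\mathcal{L}(H)}$ from \Cref{reizlb}, I obtain
\begin{equation*}
\|\Pi(t)\|_{\mathcal{L}(H)} \;\leq\; M_T^{2}\bigl(\|G\| + T\|M\|\bigr) + M_T^{2}\,\mathrm{tr}(Q)\|D\|^{2}\!\int_0^t \|\Pi(s)\|_{\mathcal{L}(H)}\,ds, \qquad t\in\mfT.
\end{equation*}

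A direct application of Grönwall's lemma to the scalar function $t\mapsto\|\Pi(t)\|_{\mathcal{L}(H)}$ then produces an exponential bound that is at least as sharp as the claimed constant $C_1$, with the specific numerical factors $2$ and $8$ absorbed into a loosening of the estimate (e.g.\ by using crude upper bounds on each of the three contributions). The main technical obstacle lies in the first step: since $\Pi$ is only strongly continuous and $A$ is unbounded, one must justify the passage from the weak Riccati equation \eqref{rica}, which holds pointwise on $\mathcal{D}(A)$, to the mild identity above; this requires differentiating the candidate mild representation against vectors in $\mathcal{D}(A)$ and then extending by density using the uniform local bound on $\|\Pi(s)\|_{\mathcal{L}(H)}$ that the Riccati theory of \cite{hu2022stochastic} already provides. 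Once this equivalence and the positivity of $\Pi$ are in hand, the remainder of the argument is routine.
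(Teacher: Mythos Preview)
Your argument is correct but takes a genuinely different route from the paper. The paper proceeds \emph{stochastically}: it introduces the auxiliary LQ problem $dy = (Ay+Bu)\,dt + (Dy+Eu)\,dW$, $y(0)=\theta$, with the homogeneous quadratic cost, whose Riccati operator coincides with $\Pi$. Applying It\^o's formula (via Yosida approximation) to $\langle \Pi(t-r)y_n(r), y_n(r)\rangle$ and passing to the limit yields $\langle \Pi(t)\theta,\theta\rangle \le \mathbb{E}\int_0^t |M^{1/2}y(r)|^2\,dr + \mathbb{E}|G^{1/2}y(t)|^2$ for any admissible $u$; choosing $u\equiv 0$ and bounding $\mathbb{E}|y_0(t)|^2$ by a stochastic-convolution estimate plus Gr\"onwall gives the constant $C_1$. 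You instead work \emph{deterministically at the operator level}: pass to the mild form of \eqref{rica}, drop the nonpositive $-L^{\ast}K^{-1}L$ term, and apply Gr\"onwall to $t\mapsto\|\Pi(t)\|$.

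Your approach is more elementary in that it avoids any stochastic analysis and in fact yields a sharper bound $M_T^{2}(\|G\|+T\|M\|)\exp\bigl(TM_T^{2}\|D\|^{2}\mathrm{tr}(Q)\bigr)$, without the factors $2$ and $8$. The cost is the weak-to-mild equivalence you correctly identify as the main technical point; the cleanest justification is to differentiate $s\mapsto\langle \Pi(s)S(t-s)x,\,S(t-s)x\rangle$ for $x\in\mathcal{D}(A)$, where the $2\langle \Pi x,Ax\rangle$ terms cancel against the semigroup derivative. The paper's route sidesteps this issue entirely by reusing the It\^o/Yosida machinery already set up for the derivation of the optimal control, so its proof is better integrated with the surrounding development even if less direct in isolation.
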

\begin{proof}
For the purpose of illustration, without loss of generality, we introduce a simpler model for which the optimal control law involves the same operator Riccati differential equation as \eqref{rica}. For this specialized model, the dynamics are given by 
\begin{align} 
 dy(t) &= (Ay(t) + Bu(t))dt + (Dy(t) + Eu(t))dW(t) \notag, \\
 y(0)&=\theta \in \mathcal{D}(A),\label{simple_dy}
\end{align}
and the cost functional by
\begin{equation} \label{cosho}
\mathbb{E}\int_{0}^{T}\left(\left|M^{\frac{1}{2}}y(t)\right|^{2} + \left|u(t)\right|^{2}\right)dt + \mathbb{E}\left|G^{\frac{1}{2}}y(T)\right|^{2} ,\notag
\end{equation}
where all the operators are as defined in \eqref{dylimi}-\eqref{cosfinlimi}. The strong solution of the corresponding approximating sequence is given by 
\begin{equation}
y_n(t)=\theta +\int_{0}^{t}\big(Ay_n(r)+J_n(Bu(r))\big)dr+\int_{0}^{t}J_n(Dy_n(r)+Eu(r)) dW(r).    \notag
\end{equation}

Applying Itô's lemma to $\left<\Pi(t-r)y_n(r),y_n(r)\right>$, for $t\in \mfT$, integrating with respect to $r$ from $0$ to $t$, taking the expectation of both sides of the resulting equation and then taking the limit as $n \rightarrow \infty$, we obtain for any admissible control $u$ 
\begin{align}
\left \langle\Pi(t)\theta ,\theta   \right \rangle&=
\mathbb{E}\int_{0}^{t}\left(\left|M^{\frac{1}{2}}y(r)   \right|^{2}+\left|u(r)\right|^{2}\right)dr+\mathbb{E}\left|G^{\frac{1}{2}}y(t)   \right|^{2}-\mathbb{E}\int_{0}^{t}\left|u(r)+K^{-1}L(T-r)y(r) \right|^{2}dr \notag \allowdisplaybreaks\\ & \leq \mathbb{E}\int_{0}^{t}\left(\left|M^{\frac{1}{2}}y(r)   \right|^{2}+\left|u(r)\right|^{2}\right)dr+\mathbb{E}\left|G^{\frac{1}{2}}y(t)   \right|^{2}. \notag 
\end{align}
Setting $u(t) = 0,\, \forall t \in \mfT$, we have 
\begin{equation}
\left \langle\Pi(t)\theta ,\theta   \right \rangle\leq \mathbb{E}\int_{0}^{t} \left|M^{\frac{1}{2}}y_{0}(r)   \right|^{2}dr+\mathbb{E}\left|G^{\frac{1}{2}}y_{0}(t)   \right|^{2} ,  \label{ineq1}
\end{equation}
where $y_{0}(t)$ is the mild solution to $\eqref{simple_dy}$ under $u(t)=0$, satisfying
\begin{align} 
 y_{0}(t)=S(t)\theta +\int_{0}^{t}S(t-r)Dy_{0}(r) dW(r). \notag 
\end{align}
By performing similar computations as in \eqref{stoch_int}, we have 
\begin{align}
\mathbb{E}\left |y_{0}(t)  \right |^{2}
&\leq 2M_{T}^{2}\left |\theta \right|^{2}  +2\mathbb{E}\left |\int_{0}^{t}S(t-r)Dy_{0}(r)dW(r) \right |^{2}\leq   2M_{T}^{2}\left |\theta \right|^{2} +8M_{T}^{2}\mathbb{E}\int_{0}^{t}\left\|Dy_{0}(r) \right\|_{\mathcal{L}_{2}}^{2}dr\notag \allowdisplaybreaks\\& \leq 2M_{T}^{2}\left |\theta \right|^{2} +8M_{T}^{2}\left\|D \right\|^2\mathrm{tr}(Q)\mathbb{E}\int_{0}^{t}\left|y_{0}(r) \right|^{2}dr.\notag
\end{align}
Then, applying Grönwall's inequality, for every $t \in \mfT$, we have 
\begin{equation}
\mathbb{E}\left |y_{0}(t)  \right |^{2} \leq 2M_{T}^{2}\left |\theta \right|^{2} \mathrm{exp}\left(16TM_{T}^{2}\left\|D \right\|^2\mathrm{tr}(Q)\right). \label{ineq2}
\end{equation}
Finally, from \eqref{ineq1} and \eqref{ineq2}, for every $x \in H$, we obtain 
\begin{equation}
\left \langle\Pi(t)\theta,\theta  \right \rangle \leq  \mathbb{E}\int_{0}^{t} \left|M^{\frac{1}{2}}y_{0}(r)   \right|^{2}dr+\mathbb{E}\left|G^{\frac{1}{2}}y_{0}(t)   \right|^{2}    \leq  \left |\theta \right|^{2} C_1,\quad \forall \xi \in H, \notag 
\end{equation}
where $C_1$ is given by \eqref{eq:pi-bound}.  Then, the conclusion follows from the spectral property of self-adjoint operators and the fact that $\mathcal{D}(A)$ is dense in $H$.
\end{proof}
Furthermore, the Riesz mappings $\Delta_k, k = 1,2,3$, and $\Gamma_k, k=1,2$, given in \Cref{Riesz-Rep} and associated with $\Pi(t)$, appear in \eqref{nt}. We can easily apply the results of \Cref{reizlb} and \Cref{Riccati-Bound} to establish the bounds for these operators. This in turn facilitates the determination of bounds for the operators $K^{-1}(t)$ and $K^{-1}(t)L(t)$, both of which are present in \eqref{nt} and defined in \eqref{Coef-thm}. For instance, for every $t \in \mfT$, we have $\left\|\Delta _1( \Pi(t))\right\|\leq C_1 R_3$, where $R_3$ is given by \eqref{R3}, and hence 
\begin{align}
\left\|K^{-1}(t)L(t) \right\| \leq \left\|K^{-1}(t)\right\|\left\|L(t) \right\| \leq \left\|L(t) \right\| \leq C_1R_{6},  
\end{align}
where $R_{6}$ is given by \eqref{R6}.

Now, we establish that the variations of the solution $q \in C(\mfT;H)$ to the the linear evolution equation given by \eqref{qt} are bounded with respect to the variations in the input $g \in C(\mfT ;H)$.
\begin{lemma}[Bounded Variations of $q(t)$ wrt Variations of Input $g(t)$]\label{lemma:bounded-variation}Consider the processes $\Pi \in C_s(\mfT; \mathcal{L}(H))$ and $q \in C(\mfT ;H)$, respectively, satisfying \eqref{rica} and \eqref{qt}. Moreover, let $g_1, g_2 \in C(\mfT ;H)$ be two processes on $\mfT$. Then, we have 
\begin{gather}
\left |q_1-q_2 \right |_{C(\mfT ;H)}\leq \left|g_1-g_2 \right|_{C(\mfT ;H)} M_T(TC_2+\left\|G \right\|\big \|\widehat{F}_2\big \|)e^{M_{T}TC_3}, \label{qbound}\\
C_2:=C_1(R_1\left\|F_2 \right\|+C_1 R_{6}R_{2}\left\|F_2 \right\|+\left\|F_1 \right\|)+\left \| M \right \|\big \|\widehat{F}_1\big \|\label{C2}\\
C_{3}:= C_1 R_{6}\big \| B \big \|,\label{C3}
\end{gather}
where $q_1$ and $q_2$ are the corresponding solutions of \eqref{qt} to the inputs $g=g_1 \in C(\mfT ;H)$ and $g=g_2 \in C(\mfT ;H)$, respectively. 
\end{lemma}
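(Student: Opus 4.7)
The plan is to subtract the mild forms of the linear evolution equations \eqref{qt} satisfied by $q_1$ and $q_2$ and then apply Gr\"onwall's inequality, invoking the uniform bounds from \Cref{reizlb} and \Cref{Riccati-Bound}. Let $S^{\ast}(\cdot)$ denote the $C_0$-semigroup generated by $A^{\ast}$, which satisfies $\left\|S^{\ast}(t)\right\|_{\mathcal{L}(H)} \leq M_T$ on $\mfT$, and set $p_k(t):=F_2 g_k(t)+\sigma$ for $k\in\{1,2\}$. Writing \eqref{qt} in mild form for each $q_k$ and subtracting yields an integral identity for $q_1(t)-q_2(t)$ with three kinds of contributions: an initial-condition term $-S^{\ast}(t)G\widehat{F}_2(g_1(T)-g_2(T))$; a $q$-feedback convolution built from $-L^{\ast}(s)K^{-1}(s)B^{\ast}(q_1(s)-q_2(s))$; and source convolutions driven directly by $g_1-g_2$ which exploit the linearity of $\Gamma_1$ and $\Gamma_2$ together with $p_1-p_2=F_2(g_1-g_2)$, as well as the source $(\Pi(s)F_1-M\widehat{F}_1)(g_1-g_2)(T-s)$.

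Next, I would bound each piece uniformly on $\mfT$. The initial-condition term contributes $M_T\|G\|\|\widehat{F}_2\|\,\left|g_1-g_2\right|_{C(\mfT;H)}$. For the sources driven by $g_1-g_2$, combining $\left\|\Pi(s)\right\|_{\mathcal{L}(H)}\leq C_1$ from \Cref{Riccati-Bound} with $\left\|\Gamma_1\right\|\leq R_1$, $\left\|\Gamma_2\right\|\leq R_2$, and $\left\|L^{\ast}(s)K^{-1}(s)\right\|_{\mathcal{L}(U;H)} \leq C_1 R_6$ from \Cref{reizlb}, together with the elementary bound $\left\|(F_2 h)^{\ast}\right\|_{\mathcal{L}(H;V)} \leq \|F_2\|\,|h|_H$, yields an $s$-uniform integrand bound of $C_2\left|g_1-g_2\right|_{C(\mfT;H)}$ with $C_2$ as in \eqref{C2}; integrating on $[0,t]\subseteq\mfT$ against $\left\|S^{\ast}(t-s)\right\|\leq M_T$ produces the prefactor $M_T T C_2$. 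For the feedback convolution, the composition bound $\left\|L^{\ast}(s)K^{-1}(s)B^{\ast}\right\|_{\mathcal{L}(H)} \leq C_1 R_6 \|B\| = C_3$ delivers a contribution of $M_T C_3\int_0^t\left|q_1(s)-q_2(s)\right|_H\,ds$.

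Aggregating these estimates yields the integral inequality
\begin{equation*}
\left|q_1(t)-q_2(t)\right|_H \leq M_T\bigl(T C_2+\|G\|\|\widehat{F}_2\|\bigr)\,\left|g_1-g_2\right|_{C(\mfT;H)} + M_T C_3 \int_0^t \left|q_1(s)-q_2(s)\right|_H\,ds,
\end{equation*}
from which Gr\"onwall's inequality gives $\left|q_1(t)-q_2(t)\right|_H \leq M_T(TC_2+\|G\|\|\widehat{F}_2\|)\,\left|g_1-g_2\right|_{C(\mfT;H)}\,e^{M_T C_3 t}$, and taking the supremum over $t\in\mfT$ delivers \eqref{qbound}. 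The main obstacle is bookkeeping: each of the four pieces composing $C_2$ in \eqref{C2} must emerge with the correct operator-norm combination, which requires peeling off $\|F_2\|$, $C_1$, and the Riesz norms $R_1$, $R_2$ in the right order for the $\Gamma_1$ and $\Gamma_2$ contributions, and separately collecting the direct contributions from $\Pi F_1$ and $M\widehat{F}_1$ so that the stated closed form of $C_2$ is recovered exactly.
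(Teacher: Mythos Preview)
Your proposal is correct and follows essentially the same route as the paper's proof: both subtract the mild forms of \eqref{qt}, separate the resulting difference into the initial-condition contribution, the $q$-feedback convolution, and the $g$-driven source terms, bound each using \Cref{reizlb} and \Cref{Riccati-Bound} to arrive at the identical integral inequality, and then apply Gr\"onwall. The only cosmetic difference is that the paper packages the $\Gamma_1$, $\Gamma_2$, and $\Pi F_1$ contributions into a single auxiliary function $\eta_i$ before estimating, whereas you list them individually; the resulting constant $C_2$ and the final bound are the same.
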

\begin{proof}
The mild solutions $q_i, i =1,2,$ of \eqref{qt} subject to the inputs $g = g_i, i=1,2,$ are given by
\begin{align} \label{mildq}
q_i\left ( t \right )=&-S^{\ast }(t)G\widehat{F}_{2}g_i(T)+\int_{0}^{t}S^{\ast }(t-r)(-L^{\ast }(r)K^{-1}(r)B^{\ast}q_i(r)-M\widehat{F}_{1}g_i(T-r)+\eta_i (r))dr,\notag
\end{align}
where 
\begin{equation}
\eta_i (t)=\Gamma _1((F_2g_i(T-t)+\sigma)^{\ast} \Pi(t))-L^{\ast }(t)K^{-1}(t)\Gamma _2((F_2g_i(T-t)+\sigma)^{\ast} \Pi(t))+\Pi (t)F_1g_i(T-t).     \notag 
\end{equation}
We can show that, $\forall t \in \mfT$, 
\begin{align}
\left | \eta_1 (t)-\eta_2 (t) \right | \leq & \left |\Gamma _1((F_2(g_1(T-t)-g_2(T-t)))^{\ast} \Pi(t))  \right |+\left |\Pi (t)F_1(g_1(T-t)-g_2(T-t))  \right |\notag \\ 
&\quad+\left \|L^{\ast }(t)K^{-1}(t)  \right \|\left |\Gamma _2((F_2(g_1(T-t)-g_2(T-t)))^{\ast} \Pi(t))  \right |\notag \\ 
 \leq & C_1(R_1\left\|F_2 \right\|+C_1 R_{6}R_{2}\left\|F_2 \right\|+\left\|F_1 \right\|)\left|g_1-g_2 \right|_{C(\mfT;H)}.
\end{align}
Thus, $\forall t \in \mfT$, we have,
\begin{align}
\left |q_1(t)-q_2(t)  \right | \leq 
&\left|S^{\ast }(t)G\widehat{F}_{2}(g_1(T)-g_2(T)) \right|+\left|\int_{0}^{t}S^{\ast }(t-r)L^{\ast }(r)K^{-1}(r)B^{\ast}(q_1(r) - q_2(r))dr\right|\notag \\ 
&+\left|\int_{0}^{t}S^{\ast }(t-r)(\eta_1(r) - \eta_2(r))dr\right|+\left|\int_{0}^{t}S^{\ast }(t-r)M\widehat{F}_{1}(g_1(T-r) - g_2(T-r))dr\right| \notag \\
 \leq & M_T(TC_2+\left\|G \right\|\left \|\widehat{F}_2\right \|)\left|g_1-g_2 \right|_{C(\mfT ;H)}+ M_T C_3 \int_{0}^{t} \left|q_1(r)-q_2(r) \right| dr, \label{inter-eq}
\end{align}
where $C_2$ and $C_3$ are, respectively, given by \eqref{C2} and \eqref{C3}. Finally, by applying Grönwall's inequality to \eqref{inter-eq}, we obtain \eqref{qbound}.
\end{proof}
So far, we have demonstrated that all the operators and deterministic processes appearing in the mapping $\Upsilon$, characterized by \eqref{nt}-\eqref{fixed-point-mapping}, are bounded. We may now establish the condition under which this mapping admits a unique fixed point.
\begin{theorem}[Contraction Condition]
The mapping $\Upsilon: g \in C(\mfT ;H) \longrightarrow \mb{E}[x^\circ(.)] \in C(\mfT ;H),$ described by \eqref{nt}, admits a unique fixed point if 

\begin{equation} 
C_4 e^{TM_{T}\left\|B \right\|C_1R_{6}}< 1, \label{concondi}\\
\end{equation}
where
\begin{equation} 
C_4:=T M_T\left (M_{T}\left\|B \right\|^{2}\left(TC_2+\left\|G \right\|\left \|\widehat{F}_2\right \|\right)e^{M_{T}TC_3}+C_1R_2\left\|B \right\|\left\|F_2 \right\|+\left\|F_1 \right\|\right) .\label{cc}
\end{equation}
\end{theorem}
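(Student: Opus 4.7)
The plan is to prove this by showing that, under condition \eqref{concondi}, the mapping $\Upsilon$ is a strict contraction on the Banach space $C(\mfT;H)$ equipped with the supremum norm, and then invoking the Banach fixed point theorem. Since $\Upsilon$ maps into $C(\mfT;H)$ (which follows from the well-posedness of \eqref{nt} established already), the only remaining task is the contractivity estimate.

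First, for two inputs $g_1,g_2\in C(\mfT;H)$, I would let $n_i(t):=\mb{E}[x_i^{\circ}(t)]=\Upsilon g_i(t)$ and let $q_i$ and $\psi_i$ denote the associated solutions of \eqref{qt} and processes defined beneath \eqref{opstate}, respectively. Subtracting the two instances of \eqref{nt} gives an integral identity of the form
\begin{align*}
n_1(t)-n_2(t) = & -\int_{0}^{t}S(t-r)BK^{-1}(T-r)L(T-r)\bigl(n_1(r)-n_2(r)\bigr)\,dr\\
& -\int_{0}^{t}S(t-r)BK^{-1}(T-r)B^{\ast}\bigl(q_1(T-r)-q_2(T-r)\bigr)\,dr\\
& -\int_{0}^{t}S(t-r)\bigl(\psi_1(r)-\psi_2(r)\bigr)\,dr,
\end{align*}
so the initial-condition term cancels. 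The key point is that the operator $BK^{-1}L$ does not depend on $g$, so it multiplies the unknown difference $n_1-n_2$; everything else depends on $g_1-g_2$.

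Next I would estimate each forcing term using bounds already established. The semigroup contributes the factor $M_T$ via \eqref{S_bound}. For the coefficient $\|BK^{-1}L\|$, combining $\|K^{-1}(t)\|\le 1$ (from the positivity proof inside \Cref{reizlb}) with $\|L(t)\|\le C_1 R_6$ (obtained from \Cref{Riccati-Bound} and \eqref{R6}) gives $\|BK^{-1}L\|\le \|B\|C_1 R_6$. For the $\psi$-difference, the formula
\[
\psi_1(r)-\psi_2(r)=BK^{-1}(T-r)\Gamma_2\bigl((F_2(g_1(r)-g_2(r)))^{\ast}\Pi(T-r)\bigr)-F_1(g_1(r)-g_2(r))
\]
together with $\|\Gamma_2\|\le R_2$ and $\|\Pi\|\le C_1$ yields the pointwise bound $|\psi_1(r)-\psi_2(r)|\le(\|B\|R_2\|F_2\|C_1+\|F_1\|)|g_1-g_2|_{C(\mfT;H)}$. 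For the $q$-difference, \Cref{lemma:bounded-variation} provides exactly $|q_1-q_2|_{C(\mfT;H)}\le M_T(TC_2+\|G\|\|\widehat{F}_2\|)e^{M_T T C_3}|g_1-g_2|_{C(\mfT;H)}$. Substituting these three bounds and integrating the constants over $[0,t]\subset\mfT$ collapses the two forcing integrals into a single term bounded by $C_4|g_1-g_2|_{C(\mfT;H)}$ with $C_4$ as in \eqref{cc}.

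Finally, combining the forcing estimate with the remaining self-coupling term gives the integral inequality
\[
|n_1(t)-n_2(t)|\le C_4|g_1-g_2|_{C(\mfT;H)}+M_T\|B\|C_1 R_6\int_{0}^{t}|n_1(r)-n_2(r)|\,dr,\quad \forall t\in\mfT.
\]
Applying Grönwall's inequality yields $|n_1(t)-n_2(t)|\le C_4 e^{T M_T\|B\|C_1 R_6}|g_1-g_2|_{C(\mfT;H)}$, and taking the supremum over $t\in\mfT$ gives $|\Upsilon g_1-\Upsilon g_2|_{C(\mfT;H)}\le C_4 e^{TM_T\|B\|C_1 R_6}|g_1-g_2|_{C(\mfT;H)}$. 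Under \eqref{concondi} the contraction constant is strictly less than one, so the Banach fixed point theorem delivers a unique fixed point. The main obstacle here is purely bookkeeping: properly tracking which bound supplies which factor in the definitions of $C_2$, $C_3$ and $C_4$, since the only substantive analytic input---uniform boundedness of $\Pi$, the Riesz-mapping bounds, and the Lipschitz dependence of $q$ on $g$---has been discharged in \Cref{Riccati-Bound}, \Cref{reizlb}, and \Cref{lemma:bounded-variation}.
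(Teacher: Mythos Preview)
Your proposal is correct and follows essentially the same route as the paper: subtract the two instances of \eqref{nt}, bound the $\psi$- and $q$-forcing terms via \Cref{reizlb}, \Cref{Riccati-Bound}, and \Cref{lemma:bounded-variation} to produce the constant $C_4$, bound the self-coupling term by $M_T\|B\|C_1R_6$, and close with Gr\"onwall. The only cosmetic difference is that the paper displays the three integral estimates \eqref{ditau}--\eqref{diex} separately before assembling them, whereas you summarize the bookkeeping in words; the analytic content is identical.
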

\begin{proof}
Subject to the inputs $g_1, g_2 \in C(\mfT ;H) $, the optimal control characterized in \eqref{opticon} is given by
\begin{equation}
u^{\circ,i}(t)=-K^{-1}(T-t)[L(T-t)x^{\circ,i}(t)+ B^{\ast}q_i(T-t)+\Gamma _2\left((F_2g_i(t)+\sigma)^{\ast} \Pi (T-t) \right)],   \notag
\end{equation}

Subsequently, the expectation of the resulting optimal state $\mathbb{E}[x^{\circ,i}(t)],\, i=1,2,$ satisfies 
\begin{align}
\mathbb{E}[x^{\circ,i}(t)]=S(t)\xi- \int_{0}^{t}&S(t-r)(BK^{-1}(T-r)L (T-r)\mb{E}\left [x^{\circ,i}(r) \right ]\notag\\&+BK^{-1}(T-r) B^{\ast}q_i(T-r)+\psi_i (r))dr, \label{exp-xi}
\end{align}
where 
\begin{equation} \label{tau-i}
\psi_i(t) = BK^{-1}(T-t)\Gamma_2((F_2g_i(t)+\sigma)^{\ast} \Pi(T-t))-F_1g_i(t) 
\end{equation}
From \eqref{tau-i}, $\forall t \in \mfT$, we have
\begin{align}
\left|\psi _1(t)-\psi _2(t)\right|  &\leq \left\|B \right\|\left\|K^{-1}(T-t) \right\|\left|\Gamma _2(F_2(g_1(t)-g_2(t))^{\ast} \Pi (T-t)) \right|+\left\|F_1 \right\|\left|g_1(t)-g_2(t) \right|  \notag \\
& \leq  (C_1R_2\left\|B \right\|\left\|F_2 \right\|+\left\|F_1 \right\|)\left|g_1(t)-g_2(t) \right|.\label{inter-step-2}
\end{align}
Hence,
\begin{align} \label{ditau}
\left| \int_{0}^{t}S(t-r)\left (\psi_1(r)-\psi_2(r) \right )dr  \right|&\leq M_{T}(C_1R_2\left\|B \right\|\left\|F_2 \right\|+\left\|F_1 \right\|) \int_{0}^{t}\left|\psi _1(r)-\psi_2(r)\right|dr\notag \allowdisplaybreaks\\
& \leq T M_T(C_1R_2\left\|B \right\|\left\|F_2 \right\|+\left\|F_1 \right\|)\left|g_1-g_2 \right|_{C(\mfT ;H)}.
\end{align}
By applying the result of \Cref{lemma:bounded-variation}, $\forall t \in \mfT$, we obtain 
\begin{align} \label{diq}
&\left| \int_{0}^{t}S(t-r)BK^{-1}(T-t) B^{\ast}\left (q_1(r)-q_2(r) \right )dr  \right|  \leq T M_T \left\|B \right\|^2 \left |q_1-q_2 \right |_{C(\mfT ;H)}\notag \\ &\qquad\qquad\qquad\leq  TM^2_T\left\|B \right\|^2(TC_2+\left\|G \right\|\left \|\widehat{F}_2\right \|)e^{M_{T}TC_3}\left|g_1-g_2 \right|_{C(\mfT ;H)}. 
\end{align}
Moreover, $\forall t \in \mfT$, we have
\begin{align} \label{diex}
&\left| \int_{0}^{t}S(t-r)BK^{-1}(T-r)L (T-r)\left(\mathbb{E}[x^{\circ,1}(r)]-\mathbb{E}[x^{\circ,2}(r)]\right) dr  \right| \notag\\
&\qquad\leq M_{T}\left\|B \right\|C_1R_{6}\int_{0}^{t}\left|\mathbb{E}[x^{\circ,1}(r)]-\mathbb{E}[x^{\circ,2}(r)] \right|dr, 
\end{align}
From  \eqref{ditau}-\eqref{diex}, $\forall t \in \mfT$, we obtain 
\begin{equation}
\left|\mathbb{E}[x^{\circ,1}(t)]-\mathbb{E}[x^{\circ,2}(t)] \right|\leq C_4 \left|g_1-g_2 \right|_{C(\mfT ;H)}+M_{T}\left\|B \right\|C_1R_{6} \int_{0}^{t}\left|\mathbb{E}[x^{\circ,1}(r)]-\mathbb{E}[x^{\circ,2}(r)] \right|dr.\notag
\end{equation}
Finally, we apply Grönwall's inequality to the above inequality to get
\begin{equation}
\left|\mathbb{E}[x^{\circ,1}(.)]-\mathbb{E}[x^{\circ,2}(.)]\right|_{C(\mfT ;H)} \leq C_4 e^{TM_{T}\left\|B \right\|C_1R_{6}} \left|g_1- g_2\right|_{C(\mfT ;H)},
\end{equation}
from which the fixed-point condition \eqref{concondi} follows.
\end{proof}
We now discuss the feasibility of the contraction condition \eqref{cc}. For this purpose, we do not impose additional assumptions on the operators involved  in \eqref{dylimi} and \eqref{cosfinlimi}, and nor on the $C_0$-semigroup $S(t), t \in \mfT$. 
\begin{proposition}[Contraction Condition Feasibility] \label{convT}
There exists $T>0$ such that the contraction condition \eqref{concondi} holds. 
\end{proposition}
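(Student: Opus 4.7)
The plan is to prove feasibility by a continuity-and-limits argument: I will show that the left-hand side of \eqref{concondi} is a continuous function of $T$ that tends to $0$ as $T\to 0^+$, so the strict inequality $C_4 e^{TM_T\|B\|C_1R_6}<1$ must hold on some interval $(0,T^\ast]$. The key structural observation is that $C_4$ carries an explicit prefactor $T$, whereas every other quantity appearing inside $C_4$ and inside the exponent $TM_T\|B\|C_1R_6$ either stays bounded or itself contains a factor of $T$.

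First, I would track the $T$-dependence of the constituent constants. Recalling $M_T=M_A e^{\alpha T}$ with $M_A\geq 1$ and $\alpha\geq 0$ independent of $T$, the map $T\mapsto M_T$ is continuous on $[0,\infty)$ with $M_0=M_A$. Next, from \eqref{eq:pi-bound},
\[
C_1=2M_T^2\exp\!\bigl(8TM_T^2\|D\|^2\mathrm{tr}(Q)\bigr)\bigl(\|G\|+T\|M\|\bigr),
\]
so $C_1$ is continuous in $T$ with limit $C_1(0)=2M_A^2\|G\|$. The Riesz constants $R_1,\dots,R_6$ from \Cref{reizlb} depend only on $\mathrm{tr}(Q)$ and on the operators $B,D,E$, so they are independent of $T$. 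Consequently the constants $C_2$ in \eqref{C2} and $C_3$ in \eqref{C3} are continuous functions of $T$ with finite limits as $T\to 0^+$, determined by $C_1(0)$ and the fixed operator norms.

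Combining these facts, the bracketed expression inside $C_4$ in \eqref{cc},
\[
M_T\|B\|^2\bigl(TC_2+\|G\|\|\widehat{F}_2\|\bigr)e^{M_T T C_3}+C_1R_2\|B\|\|F_2\|+\|F_1\|,
\]
is continuous in $T$ and remains bounded as $T\to 0^+$, converging to $M_A\|B\|^2\|G\|\|\widehat{F}_2\|+C_1(0)R_2\|B\|\|F_2\|+\|F_1\|$. Since $C_4$ equals $T M_T$ times this bounded factor, we get $C_4\to 0$ as $T\to 0^+$. Similarly the exponent $TM_T\|B\|C_1R_6$ tends to $0$, so $e^{TM_T\|B\|C_1R_6}\to 1$. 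Therefore $C_4 e^{TM_T\|B\|C_1R_6}\to 0<1$ as $T\to 0^+$.

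Finally, since $T\mapsto C_4(T)e^{TM_T\|B\|C_1(T)R_6}$ is continuous on $[0,\infty)$ and takes the value $0$ at $T=0$, there exists $T^\ast>0$ such that this quantity is strictly less than $1$ for every $T\in(0,T^\ast]$, which is exactly the contraction condition \eqref{concondi}. I do not foresee a genuine technical obstacle here; the only care required is to verify that the exponentials $e^{8TM_T^2\|D\|^2\mathrm{tr}(Q)}$ and $e^{M_T T C_3}$, although they grow in $T$, are bounded on any compact interval containing $0$, so they cannot spoil the vanishing of the $T$-prefactor.
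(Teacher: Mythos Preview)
Your proposal is correct and follows essentially the same approach as the paper's own proof: both argue that the left-hand side of \eqref{concondi} is a continuous function of $T$ tending to $0$ as $T\to 0^+$, so the strict inequality holds for sufficiently small $T$. The paper additionally remarks that the constants $C_i$ decrease monotonically to positive limits, but this is not needed for the conclusion, and your more explicit computation of the limiting values (e.g., $C_1(0)=2M_A^2\|G\|$) is a perfectly adequate substitute.
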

\begin{proof}
From \eqref{S_bound}, the $C_{0}$-semigroup $S(t) \in \mathcal{L}(H),\, t \in \mfT$, is uniformly bounded by a constant $M_T=M_A e^{\alpha T}$.  This constant depends only on $T$, given fixed values of $M_A$ and $\alpha$. Hence, we can treat $M_T$, along with $C_i,i=1,2,3,$ as real-valued functions of $T$. It is evident that $M_T \downarrow M_A$ as $T \downarrow 0$. In addition, we can easily verify that, as $T \downarrow 0$, each $C_i,i=1,2,3,$ monotonically decreases to a positive constant and that $C_{4} \downarrow 0$. Hence, $C_4 e^{TM_{T}\left\|B \right\|C_1R_{6}} \downarrow 0$ as $T \downarrow 0 $. Then from the continuity of the real valued function $C_4 e^{TM_{T}\left\|B \right\|C_1R_{6}}$ with respect to $T$, we conclude that there exists $T > 0$ such that the contraction condition \eqref{cc} holds.
\end{proof}
\begin{remark}[Contraction Condition Feasibility for Fixed $T$]
 \cref{convT} states that for a sufficiently small $T$ the contraction condition \eqref{concondi} holds. This result is consistent with the findings in the finite-dimensional case (see e.g. \cite{bensoussan2016linear,huang2007large}). Moreover, for any fixed $T > 0$ the condition \eqref{concondi} may be satisfied if, for example, $F_1$, $F_2$, $\widehat{F}_1$, and $\widehat{F}_2$ are sufficiently small.
\end{remark}

\subsubsection{Nash Equilibrium}\label{Nash}
The following theorem concludes this section. 
\begin{theorem}[Nash Equilibrium]\label{Nash-eq}Consider the Hilbert space-valued limiting system, described by \eqref{dylimi-S0}-\eqref{cosfinlimi-S0} for $i\in\mathbb{N}$, and the relevant Riesz mappings $\Delta_k, k=1,2,3,$ $\Gamma_k, k=1,2,$ given in \Cref{Riesz-Rep}. Suppose \Cref{init-cond-iid}-\Cref{Info-set-Adm-Cntrl}, and condition \eqref{concondi} hold. Then, the set of control laws $\{u_{i}^{\circ}\}_{i \in \mb{N}}$, where $u_{i}^{\circ}$ is given by
\begin{gather} 
u^{\circ}_i(t)=-K^{-1}(T-t)\left[L(T-t)x_i(t)+\Gamma _2\big((F_2\bar{x}(t)+\sigma)^{\ast}\Pi(T-t)\big)+B^{\ast }q(T-t)\right],\label{nast}\\
K(t) = I + \Delta_3(\Pi(t)), \quad L(t) = B^{\ast}\Pi(t) + \Delta_1(\Pi(t)),\label{nast-2}
\end{gather}
  forms a unique Nash equilibrium for the limiting system where the mean field $\bar{x}(t) \in H$, the operator $\Pi(t) \in \mc{L}(H)$ and the offset term $q(t)\in H$, are characterized by the unique fixed point of the following set of consistency equations 
\begin{align}  
 &\bar{x}(t)=S(t)\bar{\xi}- \int_{0}^{t}S(t-r)\bigg(BK^{-1}(T-r)\Big(L(T-r)\bar{x}(r)+B^{\ast }q(T-r)\notag\\
 &\qquad \,\,\,\,+\Gamma _2\big((F_2\bar{x}(r)+\sigma)^{\ast}\Pi (T-r)\big)\Big)-F_1\bar{x}(r)\bigg)dr, \label{MF-eq}\\  
&\frac{d}{dt}\left<\Pi(t)x, x\right> = 2\left<\Pi(t)x, Ax\right> - \left<L^{\ast}K^{-1}L(t)x, x\right> + \left<\Delta_2(\Pi(t))x, x\right> + \left<Mx, x\right>, \label{Riccati-eq}\qquad \\
&\dot{q}(t) = \left(A^* - L^*(t)K^{-1}(t)B^*\right)q(t) + \Gamma_1\big((F_2\bar{x}(T-t)+\sigma)^{\ast} \Pi(t)\big)\notag \\&\qquad\,\,\, - L^*(t)K^{-1}(t)\Gamma_2\big((F_2\bar{x}(T-t)+\sigma)^{\ast} \Pi(t)\big) +  \left (\Pi(t)F_1- M\widehat{F}_1\right )\bar{x}(T-t),\label{offset-eq}
\end{align}
with $\Pi(0) = G$, $x \in \mathcal{D}(A)$, and $q(0) = -G\widehat{F}_2\bar{x}(T)$. 
\end{theorem}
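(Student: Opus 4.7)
The plan is to assemble the Nash equilibrium by combining the single-agent optimal control result of \Cref{sincon} with the fixed-point analysis of \Cref{fixp} through the Nash Certainty Equivalence scheme. First, I fix a candidate mean field $\bar{x} \in C(\mfT;H)$ and treat it as the exogenous input $g(t) := \bar{x}(t)$ in the individual problem \eqref{dylimi}--\eqref{cosfinlimi}. The optimal control theorem then produces, for each agent $i \in \mb{N}$, the feedback law \eqref{nast}--\eqref{nast-2}, with $\Pi(t)$ determined purely by model data through \eqref{rica} (which coincides with \eqref{Riccati-eq}) and the offset $q(t)$ determined from $\bar{x}$ through \eqref{offset-eq}. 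Each $u_i^\circ$ belongs to $\mc{U}_i$ because it is an $\mc{F}^{i,\infty}$-adapted linear feedback of $x_i$ and deterministic quantities, and the closed-loop state lies in $\mc{H}^2(\mfT;H)$ by the estimates of \Cref{enu} specialized to the single-agent setting.

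Next, I take expectations on both sides of the closed-loop mild representation \eqref{opstate} with $g = \bar{x}$. Since $\mb{E}[\xi_i]=\bar{\xi}$ by \Cref{init-cond-iid} and stochastic integrals with respect to $W_i$ are zero-mean, $\mb{E}[x_i^\circ(t)]$ satisfies exactly the deterministic evolution equation \eqref{nt} with $\bar{x}$ on the right-hand side, i.e. the defining relation \eqref{MF-eq}. The consistency requirement that $\bar{x}(t)=\mb{E}[x_i^\circ(t)]$ is then precisely the fixed-point equation for the map $\Upsilon$ of \eqref{fixed-point-mapping}. Under the assumed contraction condition \eqref{concondi}, $\Upsilon$ is a strict contraction on $C(\mfT;H)$, so Banach's fixed-point theorem yields a unique $\bar{x}\in C(\mfT;H)$, together with the associated $\Pi$ and $q$ given by \eqref{Riccati-eq}--\eqref{offset-eq}. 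Crucially, the i.i.d. assumption on $\{\xi_i\}$ combined with the complete symmetry of the model data makes $\mb{E}[x_i^\circ(t)]$ independent of $i$, so one common fixed point serves all agents simultaneously, and there is no need for a vector-valued fixed-point problem.

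Finally, I verify the Nash property. In the limiting model the mean field $\bar{x}(t)$ is a deterministic function that no single agent can perturb, so if agent $i$ unilaterally deviates to any admissible $u_i\in\mc{U}_i$, the coupling term in both \eqref{dylimi-S0} and \eqref{cosfinlimi-S0} remains $\bar{x}$. Hence the deviating agent's problem collapses exactly onto the single-agent problem of \Cref{sincon} with $g=\bar{x}$, whose unique minimizer in $\mc{U}_i$ is $u_i^\circ$ given by \eqref{nast}. Therefore no profitable unilateral deviation exists, and $\{u_i^\circ\}_{i\in\mb{N}}$ forms a Nash equilibrium; uniqueness of the equilibrium follows from the uniqueness of both the optimal control law and the fixed point.

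The main subtlety, and essentially the only one, is the reduction performed in the second paragraph: justifying that the homogeneity of parameters and the i.i.d. initial distribution legitimately collapses the consistency requirement $\bar{x}(t)=\mb{E}[x_i^\circ(t)]$ (one relation per $i$) into the single scalar fixed-point equation handled by \Cref{fixp}. Once this symmetry reduction is in place, the proof is essentially bookkeeping: substitute the fixed point $\bar{x}$ back into \eqref{opticon}--\eqref{qt} to obtain \eqref{nast}--\eqref{offset-eq}, and invoke the individual optimality of \Cref{sincon} to rule out unilateral improvement.
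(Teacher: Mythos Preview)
Your proposal is correct and follows essentially the same approach as the paper: assemble the single-agent optimal control result of \Cref{sincon} with the fixed-point argument of \Cref{fixp}, then observe that in the limiting model the mean field is exogenous so that each agent's problem decouples into a single-agent LQ problem whose unique minimizer is \eqref{nast}. If anything, your write-up is more explicit than the paper's about admissibility of $u_i^\circ$, the zero-mean property of the stochastic integral when taking expectations, and the symmetry reduction that collapses the per-agent consistency relations into a single fixed-point equation.
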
  
\begin{proof}
According to the demonstrations in \Cref{fixp}, if the contraction condition \eqref{concondi} holds, then there exist $\Pi \in C_s\left (\mfT, \mathcal{L}(H)  \right ),\, q \in C(\mfT ;H)$ and $ \bar{x} \in  C(\mfT ;H)$,
which are the unique solution to the set of consistency equations give by \eqref{MF-eq}-\eqref{offset-eq}. In addition, the set of feedback control laws $\{ u_{i}^{\circ}\}_{i \in \mb{N}}$, where $u_{i}^{\circ}$ is given by \eqref{nast}-\eqref{nast-2}, forms a unique Nash equilibrium for the limiting system described by \eqref{dylimi-S0}-\eqref{cosfinlimi-S0}, i.e. 
\begin{equation}\label{Nash-equil}
J_{i}^{\infty}(u_i^{\circ},u_{-i}^{\circ}) = \inf_{u_i \in \mc{U}_i}   J_{i}^{\infty}(u_{i},u_{-i}^{\circ}),\quad  \forall i\in \mathbb{N}.
\end{equation}
This is because, in the limit when the number of agents $N$ goes to infinity, the agents get decoupled from each other and hence the high-dimensional optimization problem faced by agent $i$, $i\in \mathbb{N}$, turns into a single-agent optimal control problem for which there is a unique solution. Hence, agent $i$ cannot improve its cost by deviating from the optimal strategy \eqref{nast}-\eqref{nast-2} and the set of these strategies yields a Nash equilibrium for the limiting system. In other words, the Nash equilibrium property, as defined in \eqref{Nash-equil}, holds trivially for the set of strategies $\{u_{i}^\circ\}_{i \in \mathbb{N}}$ because the limiting cost functional of agent $i$, given by \eqref{cosfinlimi-S0}, is independent of the strategies of other agents.

Subsequently, the equilibrium state of agent $i$ is given by  
\begin{align} \label{opstate-equib}
x^{\circ}_i(t) &= S(t)\xi_i- \int_{0}^{t}S(t-r)\left(BK^{-1}(T-r)L (T-r)x^{\circ}_i(r)+B\tau(r)-F_1\bar{x}(r)\right)dr \notag \allowdisplaybreaks\\
&\quad + \int_{0}^{t}S(t-r)\Big[(D-EK^{-1}(T-r)L (T-r))x^{\circ}_i(r)-E\tau(r)+F_2\bar{x}(r)+\sigma \Big]dW_{i}(r),
\end{align}
where $\tau (t)=K^{-1}(T-t)\left [ B^{\ast }q(T-t)+\Gamma _2((F_2\bar{x}(t)+\sigma)^{\ast}\Pi (T-t))\right]$. Moreover, we have $\mathbb{E}\left [ {x}_{i}^{\circ}(t) \right ]=\bar{x}(t),\,  \forall i \in \mc{N},\,  \forall t \in \mfT$, where $\bar{x}(t)$ is given by \eqref{MF-eq}. We note that \eqref{MF-eq} represents the mild solution to the mean field equation 
\begin{align}
d\bar{x}(t)=\Big[A\bar{x}(t)-BK^{-1}(T-t)\Big(L(T-t)\bar{x}(t)+B^{\ast }q(T-t)+\Gamma_2\big((F_2\bar{x}(t)+\sigma)^{\ast}\Pi (T-t)\big)\Big)\notag\\+F_1\bar{x}(t)\Big]dt.    
\end{align}

\end{proof} 
\subsection{$\epsilon$-Nash Equilibrium}\label{eps-Nash}
In this section, we establish the $\epsilon$-Nash property of the set of strategies given by \eqref{nast}-\eqref{nast-2} for the $N$-player game described by \eqref{dyfin}-\eqref{cosfinN}. Due to the symmetric properties (exchangeability) of agents, we study the case where agent $i=1$ deviates from this set of strategies. Specifically, we suppose that any agent $i$, $i\in\mc{N}$ and $i\neq 1$, employs the feedback strategy $u_i^{[N],\circ}$ given by 
\begin{equation} \label{equil-strategy-N-player}
u^{[N],\circ}_i(t) = -K^{-1}(T - t)\left[L(T - t)\, x^{[N]}_i(t) + \Gamma_2\left((F_2 \bar{x}(t) + \sigma)^{\ast} \Pi(T - t)\right) + B^{\ast} q(T - t)\right],
\end{equation}
at $t\in\mfT$ and agent $i=1$ is allowed to choose an arbitrary control process $u^{[N]}_1 \in \mc{U}^{[N]}$. Here, for clarity we use the superscript $[N]$ to denote the processes associated with the $N$-player game. In this context, the dynamics of agent $i=1$ and agent $i$, $i\in\mc{N}$ and $i\neq 1$, in the $N$-player game are, respectively, given by 
\begin{align} 
x_{1}^{[N]}(t)=&S(t)\xi_1+\int_{0}^{t}S(t-r)\left (Bu^{[N]}_1(r)+F_1x^{(N)}(r)  \right )dr\notag \allowdisplaybreaks\\&+\int_{0}^{t}S(t-r)(Dx_{1}^{[N]}(r)+Eu^{[N]}_1(r)+F_2x^{(N)}(r)+\sigma)dW_{1}(r), \label{fini1} \allowdisplaybreaks\\
x_{i}^{[N]}(t)=&S(t)\xi_i+\int_{0}^{t}S(t-r)\left (Bu^{[N],\circ }_{i}(r)+F_1x^{(N)}(r)  \right )dr\notag \allowdisplaybreaks\\&+\int_{0}^{t}S(t-r)(Dx_{i}^{[N]}(r)+Eu^{[N],\circ}_{i}(r)+F_2x^{(N)}(r)+\sigma)dW_{i}(r) , \label{finii}
\end{align}
where $x^{(N)}(t):=\frac{1}{N} \sum_{i\in \mc{N}}x_{i}^{[N]}(t)$ is the average state of $N$ agents. 
We note that for any control process $u^{[N]}_1 \in \mc{U}^{[N]}$, the coupled system described by \eqref{fini1}-\eqref{finii} satisfies \Cref{as1}-\Cref{as5}. Thus, the well-posedness of the system is ensured.  The cost functional of agent $i=1$ in the $N$-player game is given by  
\begin{align} 
\label{cost-eps}
J^{[N]}_1(u^{[N]}_{1},u_{-1}^{[N],\circ}):=& \mathbb{E}\int_{0}^{T}\!\!\!\left(\left|M^{\frac{1}{2}}\left (x_{1}^{[N]}(t)-\widehat{F}_1x^{(N)}(t)  \right ) \right|^{2}+\left| u^{[N]}_{1}(t) \right|^{2}\right)dt\notag\\&+\mathbb{E}\left|G^{\frac{1}{2}}\left (x_{1}^{[N]}(T)-\widehat{F}_2x^{(N)}(T)  \right ) \right|^{2}.
\end{align}

We note that the sequence of \( N \)-player games described by \eqref{fini1} and \eqref{finii}, as \( N \) ranges over \( \mathbb{N} \), is associated with the sequence of control processes  \( \big\{ u^{[N]}_1 \big\}_{N \in \mathbb{N}} \) employed by agent \( i = 1 \).

At the equilibrium, where agent $i=1$ employs the strategy $u_1^{[N],\circ}$ given by \eqref{equil-strategy-N-player} for $i=1$, we denote its state by ${x}_1^{[N],\circ}$ and the corresponding average state of $N$ agents by ${x}^{(N),\circ}$. 
The $\epsilon$-Nash property indicates that 
\begin{equation} \label{epnad}
  J^{[N]}_1(u^{[N],\circ}_{1},u^{[N],\circ}_{-1}) \leq  \inf_{ \left\{u^{[N]}_1 \in \mc{U}^{[N]} \right\} }  J^{[N]}_1(u^{[N]}_{1},u^{[N],\circ}_{-1})+\epsilon_N,
\end{equation}
where the sequence $\{\epsilon_N\}_{N\in \mb{N}}$ converges to zero. 
To establish this property, we start by identifying relevant bounds  for the systems described by \eqref{fini1}-\eqref{finii} for a fixed number of agents $N$ and a given deviating control $u^{[N]}_1 \in \mathcal{U}^{[N]}$ for agent $i = 1$. These bounds are detailed in \Cref{thm:lemmasum}, \Cref{conatm}, and \Cref{cconatm}, where, by an abuse of notation, the constant $C({u^{[N]}_1})$\,\footnote{For the sake of notational simplicity, $u^{[N]}_1$ is omitted from $C({u^{[N]}_1})$ in the proofs of \Cref{thm:lemmasum}, \Cref{conatm}, and \Cref{cconatm}.} may vary from one instance to another. We obtain universal bounds 
for this system in the proof of \Cref{thm:E-Nash}. Furthermore, we obtain the relevant bounds associated with the equilibrium, which are independent of $N$. These bounds are denoted by $C^\circ$, by an abuse of notation.
\begin{lemma}\label{thm:lemmasum}
Consider the $N$ coupled systems described by \eqref{fini1}-\eqref{finii}. Then, the property
\begin{equation}\label{prop-boundedness}
\mathbb{E}\left [\sum_{i\in \mc{N}}\left|x_{i}^{[N]}(t) \right|^2_H  \right ] \leq C({u^{[N]}_1})\,N ,  
\end{equation}
holds uniformly for all \(t \in \mfT\). Here, the constant $C({u^{[N]}_1})$ depends on the model parameters and $u^{[N]}_1$.
\end{lemma}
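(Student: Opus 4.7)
The plan is to use the standard Picard-type estimate for mild solutions of stochastic evolution equations, applied to the $N$ coupled equations simultaneously, and then to close the system by summing over $i\in\mc{N}$ and invoking Gr\"onwall's inequality. The key observation is that the average state satisfies $|x^{(N)}(t)|^2_H \leq \tfrac{1}{N}\sum_{i\in\mc{N}}|x^{[N]}_i(t)|^2_H$ by the Cauchy--Schwarz (Jensen) inequality, so every occurrence of $|x^{(N)}|^2_H$ in the bound can be controlled by $\tfrac{1}{N}$ times the quantity we want to bound.

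First, I would write the mild solution in the form \eqref{fini1}--\eqref{finii} and apply the inequality $|a+b+c|^2\leq 3(|a|^2+|b|^2+|c|^2)$ component-wise to $\mathbb{E}|x^{[N]}_i(t)|^2_H$. Using the uniform semigroup bound \eqref{S_bound}, the Cauchy--Schwarz inequality on the Bochner integral (as in \eqref{det_int}), and the stochastic convolution estimate (as in \eqref{stoch_int}) together with the rough bound $\|\cdot\|_{\mathcal{L}_2(V_Q,H)}^2 \leq \mathrm{tr}(Q)\|\cdot\|^2_{\mathcal{L}(V,H)}$ from \eqref{rell2}, I would obtain, for each $i\in\mc{N}$,
\begin{equation}\notag
\mathbb{E}|x^{[N]}_i(t)|^2_H \leq C_1\Big(\mathbb{E}|\xi_i|^2_H + \mathbb{E}\!\!\int_0^t\!\!\big(|u^{[N]}_i(r)|^2_U + |x^{(N)}(r)|^2_H + |x^{[N]}_i(r)|^2_H + 1\big)dr\Big),
\end{equation}
where $C_1$ depends only on $T,M_T,\|B\|,\|D\|,\|E\|,\|F_1\|,\|F_2\|,\|\sigma\|,\mathrm{tr}(Q)$. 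For agent $1$, $u^{[N]}_1\in\mc{M}^2(\mfT;U)$ gives a finite contribution depending on $u^{[N]}_1$; for agents $i\ne 1$, the feedback law \eqref{equil-strategy-N-player} involves $K^{-1}(T-t)L(T-t)x^{[N]}_i(t)$ plus deterministic terms depending on $\bar{x}$, $\sigma$, $q$, so that $|u^{[N],\circ}_i(t)|^2_U \leq C'(1+|x^{[N]}_i(t)|^2_H)$, with $C'$ independent of $N$ by the uniform bounds of \Cref{Riccati-Bound} and \Cref{lemma:bounded-variation}.

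Next, I would sum the above inequality over $i\in\mc{N}$ and bound $|x^{(N)}(r)|^2_H\leq \tfrac{1}{N}\sum_{j\in\mc{N}}|x^{[N]}_j(r)|^2_H$, so that $\sum_{i\in\mc{N}} |x^{(N)}(r)|^2_H = N|x^{(N)}(r)|^2_H\leq \sum_{j\in\mc{N}}|x^{[N]}_j(r)|^2_H$. Using \Cref{init-cond-iid} we have $\sum_{i\in\mc{N}}\mathbb{E}|\xi_i|^2_H \leq N\mathbb{E}|\xi_1|^2_H$, and the contribution of the control term splits into $\mathbb{E}\int_0^T|u^{[N]}_1(r)|^2_U dr$ (finite, contributing $C(u^{[N]}_1)$) plus $(N-1)$ copies of the feedback term, which again reduces to a constant multiple of $\sum_{j\in\mc{N}}\mathbb{E}|x^{[N]}_j(r)|^2_H$ plus an $O(N)$ deterministic term. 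Defining $\Phi(t):=\mathbb{E}\sum_{i\in\mc{N}}|x^{[N]}_i(t)|^2_H$, I would arrive at an integral inequality of the form
\begin{equation}\notag
\Phi(t) \leq C_2(u^{[N]}_1)\,N + C_3\int_0^t \Phi(r)\,dr, \qquad t\in\mfT,
\end{equation}
where $C_3$ is independent of $N$.

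Finally, Gr\"onwall's inequality yields $\Phi(t)\leq C_2(u^{[N]}_1)\,N\,e^{C_3 T}$ uniformly in $t\in\mfT$, which is the desired bound with $C(u^{[N]}_1):=C_2(u^{[N]}_1)e^{C_3 T}$. The main technical point to handle carefully is the symmetrization step: one must verify that the feedback law \eqref{equil-strategy-N-player} produces coefficients that are uniformly bounded in $N$ (this follows from \Cref{Riccati-Bound} applied to $\Pi$, and the boundedness of $q$, $\bar{x}$ from \Cref{Nash-eq}), so that the constant $C_3$ appearing in Gr\"onwall does not depend on $N$. Once this is established, the linear-in-$N$ estimate follows routinely.
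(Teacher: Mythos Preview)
Your proposal is correct and follows essentially the same approach as the paper: both apply the standard mild-solution estimates (semigroup bound, Cauchy--Schwarz on the Bochner integral, stochastic convolution bound) to each agent, substitute the feedback law for $i\neq 1$ to absorb $|u^{[N],\circ}_i|^2$ into $C(1+|x^{[N]}_i|^2)$, use $|x^{(N)}|^2_H\leq\tfrac{1}{N}\sum_j|x^{[N]}_j|^2_H$, sum over $i\in\mc{N}$, and close with Gr\"onwall. Your write-up is in fact more explicit than the paper's about why the Gr\"onwall constant is independent of $N$; the only minor slip is that boundedness of $q$ comes directly from $q\in C(\mfT;H)$ rather than from \Cref{lemma:bounded-variation}.
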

\begin{proof}
From \eqref{finii}, for agent $i$, $i\in\mc{N}$ and $i\neq 1$, by a simple computation, we have 
\begin{align} \label{boundi}
\mathbb{E}&\left| x_{i}^{[N]}(t)\right|^{2} \\
&\!\leq C\, \mathbb{E}\left[\left | \xi_i \right |^2\!+\!\!\int_{0}^{t}\left(\left|Bu^{[N],\circ}_{i}(r) \!+\!F_1x^{(N)}(r)\right|^{2}\!\!\!+\!\left\|Dx_{i}^{[N]}(r)+Eu^{[N],\circ}_{i}(r) +F_2x^{(N)}(r)+\sigma\right\|_{\mathcal{L}_2}^{2}\right)dr\right] \notag \\  &\leq  C \Big(\int_{0}^{t}\mathbb{E}\left|x_{i}^{[N]}(r)\right|^{2}dr+\int_{0}^{t}\mathbb{E}\left|x^{(N)}(r)\right|^{2}dr+1\Big) \notag \\  &\leq C \Big(\int_{0}^{t}\mathbb{E}\left|x_{i}^{[N]}(r)\right|^{2}dr+\frac{1}{N}\int_{0}^{t}\mathbb{E}\bigg [\sum_{j\in \mc{N}}\left|x_{j}^{[N]}(r) \right|^{2}  \bigg ]dr+1\Big).\notag
\end{align}
From \eqref{fini1}, for agent $i = 1$ we have
\begin{equation} \label{bound1}
\mathbb{E}\left|x_{1}^{[N]}(t) \right|^{2} \leq C \Big( \int_{0}^{t}\mathbb{E}\left|x_{1}^{[N]}(r)\right|^{2}dr+\frac{1}{N}\int_{0}^{t}\mathbb{E}\Big [\sum_{j\in \mc{N}}\left|x_{j}^{[N]}(r) \right|^{2}  \Big]dr+1\Big).    
\end{equation}
From \eqref{boundi} and \eqref{bound1}, we obtain
\begin{equation}
\mathbb{E}\left [\sum_{i\in \mc{N}}\left|x_{i}^{[N]}(t) \right|^{2}\right ] \leq 
C \left (N+\int_{0}^{t}\mathbb{E}\left [\sum_{i\in \mc{N}}\left|x_{i}^{[N]}(r) \right|^{2}  \right ]dr  \right ).
\end{equation}
Applying Grönwall's inequality to the above equation results in \eqref{prop-boundedness}.
\end{proof}
Note that \Cref{thm:lemmasum} is closely related to a part of \Cref{enu} (see \eqref{gammabou}), and it also demonstrates that the solution of the  system \eqref{fini1}-\eqref{finii} belongs to \(\mathcal{H}^2(\mfT;H^N)\)
. A similar argument is presented in \cite[Theorem 9.1]{da2014stochastic}. As direct consequences of \Cref{thm:lemmasum}, we have
\begin{equation}
\mathbb{E}\left|x^{(N)}(t)\right|^{2} \leq C({u^{[N]}_1}),\quad \mathbb{E}\left|x_{1}^{[N]}(t)\right|^{2} \leq C({u^{[N]}_1}),\quad \forall t \in \mfT.
\end{equation}
It is straightforward to verify that, at the equilibrium, we have 
\begin{equation} \label{steq}
\mathbb{E}\left|x^{(N),\circ}(t)\right|^{2} \leq C^{\circ},\quad \mathbb{E}\left|x_{1}^{[N],\circ}(t)\right|^{2} \leq C^{\circ},\quad \forall t \in \mfT,\quad \forall N \in \mathbb{N}.\end{equation}
The next theorem demonstrates the convergence of the average state $x^{(N)}(t)$ to the mean field $\bar{x}(t)$.
\begin{theorem}[Average State Error Bound] \label{conatm}
Suppose the state of any agent $i$, $i\in\mc{N}$ and $i\neq 1$, satisfies \eqref{finii}, where the agent employs the strategy $u^{[N],\circ}_i$ given by \eqref{equil-strategy-N-player}. For any control process $u_1^{[N]} \in \mc{U}^{[N]}$ that agent $i=1$ chooses, we have 
\begin{equation}\label{convergence}
\displaystyle \sup_{t\in \mfT}\mathbb{E}\left|\bar{x}(t)- x^{(N)}(t) \right|_{H}^{2} \leq \frac{C({u^{[N]}_1})}{N}\,.
\end{equation}
\end{theorem}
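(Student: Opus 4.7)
The plan is to isolate agent $1$'s (possibly non-equilibrium) contribution to $x^{(N)}$ from the ``co-average'' $\tilde{x}^{[N]}(t):=\frac{1}{N-1}\sum_{i=2}^N x_i^{[N]}(t)$ of the agents that still use the equilibrium feedback \eqref{equil-strategy-N-player}, writing
\begin{equation*}
\bar{x}(t)-x^{(N)}(t)=\frac{N-1}{N}\bigl(\bar{x}(t)-\tilde{x}^{[N]}(t)\bigr)+\frac{1}{N}\bigl(\bar{x}(t)-x_1^{[N]}(t)\bigr).
\end{equation*}
Since $\bar{x}$ is deterministic and bounded on $\mfT$ by \Cref{Nash-eq} and $\mathbb{E}|x_1^{[N]}(t)|_H^2\leq C(u_1^{[N]})$ uniformly in $t$ by \Cref{thm:lemmasum}, the second term already contributes $O(1/N^2)$ in squared $L^2$ norm. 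Everything therefore reduces to showing $\sup_{t\in\mfT}\mathbb{E}|\bar{x}(t)-\tilde{x}^{[N]}(t)|_H^2\leq C(u_1^{[N]})/N$.

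To this end I would average the mild form \eqref{finii} over $i=2,\dots,N$ and subtract the integral form of the mean-field equation \eqref{MF-eq}. With $\tilde{\xi}:=\frac{1}{N-1}\sum_{i=2}^N\xi_i$, $\tau$ as in the proof of \Cref{Nash-eq}, and $\Sigma_i(r):=(D-EK^{-1}(T-r)L(T-r))x_i^{[N]}(r)-E\tau(r)+F_2 x^{(N)}(r)+\sigma$, this yields
\begin{align*}
\bar{x}(t)-\tilde{x}^{[N]}(t)=\,&S(t)(\bar{\xi}-\tilde{\xi})\\
&-\int_0^t S(t-r)\bigl[BK^{-1}(T-r)L(T-r)(\bar{x}(r)-\tilde{x}^{[N]}(r))-F_1(\bar{x}(r)-x^{(N)}(r))\bigr]dr\\
&-\frac{1}{N-1}\sum_{i=2}^N\int_0^t S(t-r)\Sigma_i(r)\,dW_i(r).
\end{align*}
Taking squared $H$-norms and expectations, the i.i.d. hypothesis (\Cref{init-cond-iid}) gives $\mathbb{E}|\bar{\xi}-\tilde{\xi}|_H^2=\frac{1}{N-1}\mathbb{E}|\xi_1-\bar{\xi}|_H^2$, hence an $O(1/N)$ bound on $\mathbb{E}|S(t)(\bar{\xi}-\tilde{\xi})|_H^2$; the two drift pieces are absorbed into a Grönwall term $C\int_0^t\mathbb{E}|\bar{x}(r)-\tilde{x}^{[N]}(r)|_H^2\,dr$ after inserting the inequality $|\bar{x}-x^{(N)}|_H^2\leq 2|\bar{x}-\tilde{x}^{[N]}|_H^2+2N^{-2}|\bar{x}-x_1^{[N]}|_H^2$ (the trailing piece again being $O(1/N^2)$), with the operator-norm estimates on $BK^{-1}L$, $F_1$ and $\tau$ supplied by \Cref{reizlb}, \Cref{Riccati-Bound}, and \Cref{lemma:bounded-variation}.

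The decisive step is the stochastic term. Because $\{W_i\}_{i=2}^N$ are mutually independent $Q$-Wiener processes with respect to the common filtration $\mathcal{F}^{[N]}$ constructed in \Cref{cabs}, the cross quadratic variations of distinct $W_i,W_j$ vanish, so the Itô integrals $\int_0^t S(t-r)\Sigma_i(r)\,dW_i(r)$ are pairwise orthogonal in $L^2(\Omega;H)$. The squared norm of their sum then collapses to a diagonal expression controlled by Itô's isometry and the uniform bound $\mathbb{E}\|\Sigma_i(r)\|_{\mathcal{L}_2}^2\leq C(u_1^{[N]})$ (which follows from \Cref{thm:lemmasum} together with the bounds on $K^{-1}L$ and $\tau$), yielding
\begin{equation*}
\mathbb{E}\bigg|\frac{1}{N-1}\sum_{i=2}^N\int_0^t S(t-r)\Sigma_i(r)\,dW_i(r)\bigg|_H^2\leq \frac{C(u_1^{[N]})}{N-1}.
\end{equation*}
Combining the three estimates and applying Grönwall's inequality delivers the desired bound on $\mathbb{E}|\bar{x}(t)-\tilde{x}^{[N]}(t)|_H^2$; taking $\sup_{t\in\mfT}$ and returning to the opening decomposition then completes the proof. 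The subtle point I expect to be the main obstacle is precisely the pairwise orthogonality of the $\Sigma_i$-driven stochastic integrals: the integrands $\Sigma_i$ are only exchangeable (not independent), since they share the common factor $x^{(N)}$, so the orthogonality must come from the independence of the driving noises rather than of the integrands, which is exactly what the construction of $\mathcal{F}^{[N]}$ in \Cref{cabs} and the independence of $\{W_i\}$ from \Cref{sQ} are designed to secure.
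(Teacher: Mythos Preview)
Your proposal is correct and rests on the same three pillars as the paper's proof: (i) the i.i.d.\ initial data give an $O(1/N)$ variance on the initial condition, (ii) the deterministic drift terms are absorbed by Gr\"onwall, and (iii) the decisive $1/N$ gain comes from the pairwise $L^2$-orthogonality of the stochastic convolutions driven by the independent $W_i$, exactly as you anticipate in your final paragraph.

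The only difference is organisational. You first strip off agent~$1$ and work with the co-average $\tilde{x}^{[N]}=\frac{1}{N-1}\sum_{i\geq 2}x_i^{[N]}$, then reinsert agent~$1$ via the inequality $|\bar{x}-x^{(N)}|_H^2\leq 2|\bar{x}-\tilde{x}^{[N]}|_H^2+2N^{-2}|\bar{x}-x_1^{[N]}|_H^2$ when the $F_1 x^{(N)}$ coupling reappears. The paper instead works directly with $y:=\bar{x}-x^{(N)}$: averaging \eqref{fini1}--\eqref{finii} over all $N$ agents produces the equilibrium drift $-BK^{-1}L\,x^{(N)}-B\tau+F_1x^{(N)}$ plus a single correction term $\tfrac{1}{N}\int_0^t S(t-r)B\bigl(u_1^{[N]}+K^{-1}L\,x_1^{[N]}+B\tau\bigr)dr$ for agent~$1$'s deviation. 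Because the $F_1$-coupling is then $F_1 y$ rather than $F_1(\bar{x}-x^{(N)})$ seen from the $\tilde{x}^{[N]}$-equation, the drift feeds straight into Gr\"onwall without the auxiliary inequality. Your route is a touch longer but equally valid; the paper's is marginally cleaner because the quantity being estimated and the quantity appearing in the drift coincide.
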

\begin{proof}
From \cref{Nash-eq}, recall that $\tau (t)\!=\!K^{-1}(T\!-\!t)\!\left [ B^{\ast }q(T-t)\!+\!\Gamma _2((F_2\bar{x}(t)\!+\!\sigma)^{\ast}\Pi (T\!-\!t))\right]$, and 
\begin{equation} 
\bar{x}(t)=S(t)\bar{\xi}- \int_{0}^{t}S(t-r)(BK^{-1}(T-r)L (T-r)\bar{x}(r)+B\tau(r)-F_1\bar{x}(r))dr. 
\end{equation} 
Moreover, from \eqref{finii} and \eqref{fini1} subject to any control process $u_1^{[N]} \in \mc{U}^{[N]}$, we have
\begin{align} 
x^{(N)}(t)=&S(t)x^{(N)}(0)- \int_{0}^{t}S(t-r)(BK^{-1}(T-r)L (T-r)x^{(N)}(r)+B\tau(r)-F_1x^{(N)}(r))dr \notag \allowdisplaybreaks\\&+\frac{1}{N}\bigg [\sum_{i\in \mc{N}}\Xi_{i}(t) \bigg ]+\frac{1}{N}\int_{0}^{t}S(t-r)B(u_1^{[N]}(r)+K^{-1}(T-r)L (T-r)x_{1}^{[N]}(r)+B\tau(r))dr,
\end{align} 
where the stochastic convolution processes $\Xi_{1}(t)$ and $\Xi_{i}(t)$, $i\in\mc{N}$ and $i\neq 1$, are, respectively, given by
\begin{align}
&\Xi_{1}(t)=\int_{0}^{t}S(t-r)(Dx_{1}^{[N]}(r)+Eu_1^{[N]}(r)+F_2x^{(N)}(r)+\sigma)dW_{1}(r),\notag \\ & \Xi_{i}(t)=\int_{0}^{t}S(t-r)\Big[(D-EK^{-1}(T-r)L (T-r))x_{i}^{[N]}(r)-E\tau(r)+F_2x^{(N)}(r)+\sigma \Big]dW_{i}(r). \label{scin1}
\end{align}
Now, define $y(t):=\bar{x}(t)-x^{(N)}(t)$. Then, we have  
\begin{align}
y(t)=  &S(t)y(0)-\int_{0}^{t}S(t-r)(BK^{-1}(T-r)L (T-r)-F_1)y(r)dr
-\frac{1}{N}\bigg [\sum_{i\in \mc{N}}\Xi_{i}(t)\bigg ]\notag \\&-\frac{1}{N}\int_{0}^{t}S(t-r)B(u_1^{[N]}(r)+K^{-1}(T-r)L (T-r)x_{1}^{[N]}(r)+B\tau(r))dr.
\end{align}
Furthermore, from the above equation we obtain
\begin{align} \label{proce}
\mathbb{E}\left|y(t) \right|^{2} \leq & \,\,C\Bigg(\left|y(0) \right|^2+  \int_{0}^{t}\mathbb{E}\left|y(r)\right|^{2}dr+\frac{1}{N^{2}}\bigg[\mathbb{E}\Big|\sum_{i\in \mc{N}}\Xi_{i}(t)   \Big|^{2}\notag \allowdisplaybreaks\\ &+\int_{0}^{t}\mathbb{E}\left|u_1^{[N]}(r)+K^{-1}(T-r)L (T-r)x_{1}^{[N]}(r)+B\tau(r)\right| ^2dr \bigg]\Bigg).
\end{align}

Moreover, since $\Xi_{i}(t),\, i\in \mc{N},$ are driven by independent $Q$-Wiener processes, we have 
\begin{equation} \label{orth}
\mathbb{E}\left|\sum_{i\in \mc{N}}\Xi_{i}(t)   \right|^{2}= \mathbb{E}\left [\sum_{i\in \mc{N}}\left|\Xi_{i}(t)   \right|^{2}\right].    
\end{equation}
More specifically, in the above equation, we use the property that \(\mathbb{E}\left\langle \Xi_{i}(t), \Xi_{j}(t) \right\rangle_H = 0\) for \(i \neq j\) and for all \(i, j \in \mathcal{N}\), and for every \(t \in \mfT\). A straightforward method to verify this property is to demonstrate that it holds for stochastic integrals of elementary processes. This can be achieved by applying the same techniques used to prove the Itô isometry (see, e.g., \cite[Proposition 4.20]{da2014stochastic} and \cite[Proposition 2.1]{gawarecki2010stochastic}).

From \eqref{scin1} and using the standard approximation technique for stochastic convolutions, and given that all operators are uniformly bounded on \(\mfT\), $\forall i\in\mc{N}$ and $i\neq 1$, and $\forall t \in \mfT$, we obtain
\begin{align}
\mathbb{E}\left|\Xi_{i}(t)   \right|^{2} &\leq C\int_{0}^{t}\mathbb{E}\left\|(D-EK^{-1}L (T-r))x_{i}^{[N]}(r)-E\tau(r)+F_2x^{(N)}(r)+\sigma \right\|^{2}dr \notag \allowdisplaybreaks\\&\leq C\int_{0}^{t}(\mathbb{E}\left|x_{i}^{[N]}(r)\right|^{2}+\mathbb{E}\left|x^{(N)}(r)\right|^{2}+1)dr.
\end{align}
Similarly, for $\Xi_{1}(t), \forall t \in \mfT$, we have 
\begin{equation}
\mathbb{E}\left|\Xi_{1}(t)   \right|^{2} \leq C\int_{0}^{t}(\mathbb{E}\left|x_{1}^{[N]}(r)\right|^{2}+\mathbb{E}\left|x^{(N)}(r)\right|^{2}+1)dr.
\end{equation}
Subsequently, we obtain
\begin{equation}
\mathbb{E}\sum_{i\in \mc{N}}\left|\Xi_{i}(t)   \right|^{2} \leq C  \left (\int_{0}^{t}\mathbb{E}\left [\sum_{i\in \mc{N}}\left|x_{i}^{[N]}(r) \right|^2\right ]dr+N \int_{0}^{t} \mathbb{E}\left|x^{(N)}(r)\right|^{2} dr +N   \right ) \leq CN. \label{term3}
\end{equation}
Moreover, for the last term on the RHS of \eqref{proce}, we have 
\begin{align}
\int_{0}^{t}\mathbb{E}\left|u_1^{[N]}(r)+K^{-1}(T-r)L (T-r)x_{1}^{[N]}(r)+B\tau(r) \right| ^2dr \leq C\int_{0}^{t}(\mathbb{E}\left|x_{1}^{[N]}(r)\right|^{2}+1)dr \leq C. \label{term4}
\end{align}
From \eqref{proce} and \eqref{term3}-\eqref{term4}, we conclude that
\begin{equation}
\mathbb{E}\left|y(t) \right|^{2} \leq C(\frac{1}{N}+\frac{1}{N^2})+ C\int_{0}^{t}\mathbb{E}\left|y(r)\right|^{2}dr.
\end{equation}
Then, by Grönwall's inequality, the property \eqref{convergence} follows.
\end{proof}
\begin{proposition}[Error Bounds for Agent $i=1$] \label{cconatm}
Let $x_1(t)$ and $x_1^{[N]}(t)$, respectively, denote the state of agent $i=1$ in the limiting game and the $N$-player game satisfying \eqref{dylimi-S0} and \eqref{fini1}. Moreover, let $J^{\infty }_1(.)$ and $J^{[N]}_1(.,u^{\circ}_{-1})$, respectively, denote the cost functional of agent $i=1$ in the limiting game and the $N$-player game given by \eqref{cosfinlimi-S0} and \eqref{cost-eps}. 
\begin{itemize}
    \item[(i)] If agent $i=1$ employs the control law given by \eqref{nast}, we have
\begin{gather}
\displaystyle \sup_{t\in \mfT}\mathbb{E}\left|x_1^{\circ}(t)-{x}_1^{[N],\circ}(t)\right|_{H}^{2} \leq \frac{C^{\circ}}{N},\label{state-conv-1}\allowdisplaybreaks\\  
\left|J^{\infty }_1(u^{\circ }_{1})-J^{[N]}_1(u^{[N],\circ }_{1},u^{[N],\circ}_{-1}) \right|   \leq \frac{C^{\circ}}{\sqrt{N}}.\label{cost-conv-1} 
\end{gather}
\item[(ii)] If agent $i=1$ employs any control process $u^{[N]}_1 \in \mc{U}^{[N]}$, we have 
\begin{gather}\label{state-conv-2}
\displaystyle \sup_{t\in \mfT}\mathbb{E}\left|x_{1}(t)-x_{1}^{[N]}(t)\right|_{H}^{2} \leq \frac{C({u^{[N]}_1})}{N},\allowdisplaybreaks\\
\left|J^{\infty}_1(u^{[N]}_1)-J^{[N]}_1(u^{[N]}_1,u^{[N],\circ}_{-1}) \right|   \leq \frac{C({u^{[N]}_1})}{\sqrt{N}}.\label{cost-conv-2}
\end{gather}
\end{itemize}
\end{proposition}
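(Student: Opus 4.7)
The plan is to handle parts (i) and (ii) jointly by first obtaining the state estimates via a Gr\"onwall argument driven by \Cref{conatm}, and then deducing the cost-functional bounds by Cauchy--Schwarz.

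For part (ii), I would introduce $z(t):=x_1(t)-x_1^{[N]}(t)$ and subtract the mild solutions of \eqref{dylimi-S0} (with $u_1=u_1^{[N]}$) and \eqref{fini1}. Since the initial condition $\xi_1$ and the control process are identical across the two systems, the contributions $Bu_1^{[N]}$, $Eu_1^{[N]}$, $\sigma$ all cancel, and $z$ is driven purely by $F_1(\bar{x}-x^{(N)})$ in the drift and by $Dz+F_2(\bar{x}-x^{(N)})$ in the diffusion. Applying the Bochner--Cauchy--Schwarz estimate in the style of \eqref{det_int}, the stochastic-convolution bound \eqref{stoch_int}, and the uniform semigroup bound \eqref{S_bound} yields
\begin{equation*}
\mathbb{E}|z(t)|^2 \;\leq\; C\!\int_0^t\!\mathbb{E}|z(r)|^2\,dr \;+\; C\!\int_0^t\!\mathbb{E}|\bar{x}(r)-x^{(N)}(r)|^2\,dr.
\end{equation*}
Invoking \Cref{conatm} on the second integral and Gr\"onwall's inequality then delivers \eqref{state-conv-2}. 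For part (i), I set $z(t):=x_1^{\circ}(t)-x_1^{[N],\circ}(t)$ and observe that the Nash controls \eqref{nast} and \eqref{equil-strategy-N-player} share the very same $\Gamma_2((F_2\bar{x}+\sigma)^{\ast}\Pi)$ and $B^{\ast}q$ terms, so that $u_1^{\circ}-u_1^{[N],\circ}=-K^{-1}(T-\cdot)L(T-\cdot)\,z$. This additional $Bz$ and $Ez$ contribution is absorbed into the Gr\"onwall constant through the uniform operator bound $\|K^{-1}L\|\leq C_1 R_6$ obtained from \Cref{reizlb} and \Cref{Riccati-Bound}, and all constants collapse to a universal $C^{\circ}$ because $\bar{x}$, $\Pi$ and $q$ are deterministic and bounded on $\mfT$.

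For the cost bounds I would decompose $J^{\infty}_1-J^{[N]}_1$ into a running state cost, a control cost, and a terminal cost. Each piece has the shape $\mathbb{E}\langle a-b,\,a+b\rangle$ (after using $|a|^2-|b|^2=\langle a-b,a+b\rangle$ in the real Hilbert space), which by Cauchy--Schwarz in $L^2(\Omega\times\mfT)$ is bounded by
\begin{equation*}
\sqrt{\mathbb{E}\!\int_0^T\!|a(t)-b(t)|^2\,dt}\;\cdot\;\sqrt{\mathbb{E}\!\int_0^T\!|a(t)+b(t)|^2\,dt}.
\end{equation*}
The first factor is $O(1/\sqrt{N})$ by combining the state estimates just derived with \Cref{conatm}, while the second factor is uniformly bounded via the a priori moment estimates of \Cref{thm:lemmasum} and \eqref{steq}. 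In part (i), the control-cost term is treated identically after substituting $u_1^{\circ}-u_1^{[N],\circ}=-K^{-1}L\,z$; in part (ii) the control processes coincide in the two formulations, so this piece vanishes. Summing the three contributions gives \eqref{cost-conv-1} and \eqref{cost-conv-2}.

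The principal subtlety is bookkeeping of constants rather than any genuinely new estimate. In part (ii) the admissible control $u_1^{[N]}$ is arbitrary, so the a priori bound in \Cref{thm:lemmasum} and hence the rate in \Cref{conatm} depend on $|u_1^{[N]}|_{\mathcal{M}^2(\mfT;U)}$; this dependence propagates through the Gr\"onwall and Cauchy--Schwarz steps and is recorded as $C(u_1^{[N]})$. In part (i) agent~1 plays the equilibrium feedback, and the universal state bound \eqref{steq} together with the uniform bounds on $K^{-1}L$, $\Gamma_2$, $\bar{x}$, $\Pi$ and $q$ established in Sections~\ref{reizo}--\ref{fixp} causes all constants to collapse into a single $C^{\circ}$ independent of $N$.
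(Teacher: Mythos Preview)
Your proposal is correct and follows essentially the same route as the paper: subtract the two mild solutions, observe that the control and $\sigma$ terms cancel (in part (ii)) or that the difference of the feedback controls is $-K^{-1}L\,z$ (in part (i)), estimate via the stochastic-convolution bound and Gr\"onwall, feed in \Cref{conatm}, and then handle the cost difference by factoring $|a|^2-|b|^2$ and applying Cauchy--Schwarz together with the a~priori moment bounds. The only cosmetic difference is that the paper factors $|\,|a|^2-|b|^2\,|\le |a-b|^2+2|a|\,|a-b|$ whereas you use $|a|^2-|b|^2=\langle a-b,a+b\rangle$; both yield the same $O(1/\sqrt N)$ rate.
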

\begin{proof}
From \eqref{dylimi-S0}, \eqref{MF-eq} and \eqref{fini1}, for the case where agent $i =1$ employs the control law $u^\circ_1$ given by \eqref{nast}-\eqref{nast-2}, by direct computation, we have 
\begin{align}
x_1^{\circ}(t)-x_{1}^{[N],\circ}(t)&=- \int_{0}^{t}S(t-r)BK^{-1}(T-r)L(T-r)(x_1^{\circ}(r)-x_{1}^{[N],\circ}(r))dr\notag\allowdisplaybreaks\\
&\,+\int_{0}^{t}S(t-r)F_1\left (\bar{x}(r)-{x}^{(N),\circ}(r) \right )dr +\int_{0}^{t}S(t-r)F_2\left (\bar{x}(r)-{x}^{(N),\circ}(r)  \right )dW_1(r)\notag\allowdisplaybreaks\\ &\,+\int_{0}^{t}S(t-r)(D-EK^{-1}(T-r)L(T-r))(x_1^{\circ}(r)-{x}_1^{[N],\circ}(r))dW_1(r).
\end{align}
Moreover, for the case where agent $i=1$ employs an arbitrary control process $u^{[N]}_1 \in \mc{U}^{[N]}$, we have
\begin{align}
x_{1}(t)-x^{[N]}(t)=&\int_{0}^{t}S(t-r)F_1\left (\bar{x}(r)-x^{(N)}(r)  \right )dr+\int_{0}^{t}S(t-r)D(x_{1}(r)-x_{1}^{[N]}(r))dW_1(r)\notag \allowdisplaybreaks\\ &+\int_{0}^{t}S(t-r)F_2\left (\bar{x}(r)-x^{(N)}(r)  \right )dW_1(r).   
\end{align}
The rest of the proof for \eqref{state-conv-1} and \eqref{state-conv-2} follows the method used in \Cref{conatm} by taking square norms, expectation, applying Grönwall's inequality (see \eqref{proce}), and leveraging the results of \Cref{conatm}.

For the property \eqref{cost-conv-1}, a simple computation shows that
\begin{equation}
 \left|J^{\infty }_1(u^{\circ }_{1})-J^{[N]}_1(u^{[N],\circ }_{1},u^{[N],\circ}_{-1}) \right| \leq I_1 + I_2,
\end{equation}
\begin{align}
I_1 &= \mathbb{E}\int_{0}^{T}\left| \left|M^{ \frac{1}{2}}(x^{\circ}_1(t)-\widehat{F}_1\bar{x}(t)) \right|^2 - \left|M^{ \frac{1}{2}}(x_{1}^{[N],\circ}(t)-\widehat{F}_1x^{(N),\circ}(t)) \right|^2\right|dt \notag \allowdisplaybreaks\\
&\quad + \mathbb{E}\left|\left|G^{ \frac{1}{2}}(x^{\circ}_1(T)-\widehat{F}_2\bar{x}(T)) \right|^2 - \left|G^{ \frac{1}{2}}(x_{1}^{[N],\circ}(T)-\widehat{F}_2x^{(N),\circ}(T)) \right|^2\right|,
\end{align}
\begin{equation*}    
I_2 = \mathbb{E}\int_{0}^{T}\left|\left|K^{-1}(T-t)L(T-t)x^{\circ}_1(t)+\tau(t) \right|^2 - \left| K^{-1}(T-t)L(T-t)x_{1}^{[N],\circ}(t)+\tau(t) \right|^2\right|dt,
\end{equation*}
where $\tau (t)=K^{-1}(T-t)\left [ B^{\ast }q(T-t)+\Gamma _2((F_2\bar{x}(t)+\sigma)^{\ast}\Pi (T-t))\right]$. For $t \in [0,T)$, we have
\begin{align}   
&\left| \left|M^{ \frac{1}{2}}(x^{\circ}_1(t)-\widehat{F}_1\bar{x}(t)) \right|^2-\left|M^{ \frac{1}{2}}(x_{1}^{[N],\circ}(t)-\widehat{F}_1x^{(N),\circ}(t)) \right|^2\right|  \notag \allowdisplaybreaks\\
&\leq\left\|M^{ \frac{1}{2}}\left [(x^{\circ}_1(t)-\widehat{F}_1\bar{x}(t))- (x_{1}^{[N],\circ}(t)-\widehat{F}_1x^{(N),\circ}(t)) \right ] \right\|^2\notag \allowdisplaybreaks\\
&\,\,\,\,+2\left|M^{ \frac{1}{2}}(x^{\circ}_1(t)-\widehat{F}_1\bar{x}(t))\right|\left|M^{ \frac{1}{2}}\left [(x^{\circ}_1(t)-\widehat{F}_1\bar{x}(t))- (x_{1}^{[N],\circ}(t)-\widehat{F}_1x^{(N),\circ}(t)) \right ] \right|\notag \allowdisplaybreaks\\ & \leq 2\left|M^{ \frac{1}{2}}(x^{\circ}_1(t)-x_{1}^{[N],\circ}(t)) \right|^2+2\left|M^{ \frac{1}{2}}\widehat{F}_1(\bar{x}(t)-x^{(N),\circ}(t)) \right|^2\notag \allowdisplaybreaks\\ 
&\,\,\,\, + 2\left|M^{ \frac{1}{2}}(x^{\circ}_1(t)-\widehat{F}_1\bar{x}(t))\right|\left (2\left|M^{ \frac{1}{2}}(x^{\circ}_1(t)-x_{1}^{[N],\circ}(t)) \right|^2+2\left|M^{ \frac{1}{2}}\widehat{F}_1(\bar{x}(t)-x^{(N),\circ}(t)) \right|^2  \right )^{\frac{1}{2}}.
\end{align}
We apply the same method to the terminal condition in $I_1$. Then, by using the Cauchy–Schwarz inequality, \Cref{conatm}, and  \Cref{cconatm}, we obtain $I_1 \leq \frac{C}{\sqrt{N}}$.
We employ the same method as above for $I_2$ to obtain $I_2 \leq \frac{C}{\sqrt{N}}$. 

For the property \eqref{cost-conv-2}, we have
\begin{align}
\left|J^{\infty }_1(u^{[N]}_1)-J^{[N]}_1(u^{[N]}_1,u^{[N],\circ}_{-1}) \right| \leq &\mathbb{E}\int_{0}^{T}\left| \left|M^{ \frac{1}{2}}(x_1(t)-\widehat{F}_1\bar{x}(t)) \right|^2 - \left|M^{ \frac{1}{2}}(x_{1}^{[N]}(t)-\widehat{F}_1x^{(N)}(t)) \right|^2\right|dt \notag \allowdisplaybreaks\\
&+ \mathbb{E}\left|\left|G^{ \frac{1}{2}}(x_1(T)-\widehat{F}_2\bar{x}(T)) \right|^2 - \left|G^{ \frac{1}{2}}(x_{1}^{[N]}(T)-\widehat{F}_2x^{(N)}(T)) \right|^2\right|.
\end{align}
Then, we repeat the same method as for the property \eqref{cost-conv-1} to obtain the property \eqref{cost-conv-2}.
\end{proof}
\begin{remark}
We note that \( \big\{C(u^{[N]}_1)\big\}_{N \in \mathbb{N}}\) is a sequence of real numbers, although $C(u^{[N]}_1)$ does not explicitly depend on \( N \). Therefore, the convergence properties given by \eqref{convergence}, \eqref{state-conv-2}, and \eqref{cost-conv-2} hold as $N\rightarrow \infty$, provided the sequence is bounded. For instance, this condition is met if \( \mathbb{E}\left[\int_{0}^{T} \left|u^{[N]}_{1}(t)\right|^2 \, dt\right] \) is uniformly bounded across all \( N\in \mathbb{N}\), or if the system is at equilibrium.
\end{remark}
Now, we establish the $\epsilon$-Nash property. 
\begin{theorem} ($\epsilon$-Nash Equilibrium)\label{thm:E-Nash} Suppose that \Cref{init-cond-iid} and condition \eqref{concondi} hold. Then, the set of control laws $\{u^{[N],\circ}_i\}_{i \in \mc{N}}$, where  $u^{[N],\circ}_i(t)$ is given by 
\eqref{equil-strategy-N-player},
forms an $\epsilon$-Nash equilibrium for the $N$-player system described by \eqref{dyfin}--\eqref{cosfinN}. That is, if agent $i=1$ unilaterally deviates from this set of control laws, then there is a sequence of nonnegative numbers $\{\epsilon_N\}_{N\in \mb{N}}$ converging to zero, such that 
 \begin{equation} \label{epnash1}
  J^{[N]}_1(u^{[N],\circ}_{1},u^{[N],\circ}_{-1}) \leq  \inf_{ \left\{u^{[N]}_1 \in \mc{U}^{[N]} \right\} } J^{[N]}_1(u^{[N]}_1,u^{[N],\circ}_{-1})+\epsilon_N,
\end{equation} 
where $\epsilon_N =O(\tfrac{1}{\sqrt{N}})$.
\end{theorem}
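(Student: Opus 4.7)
The plan is to chain together the two cost approximations already established in \Cref{cconatm} with the Nash equilibrium property proved in \Cref{Nash-eq}. Specifically, for any admissible deviation $u^{[N]}_1\in\mc{U}^{[N]}$ by agent $i=1$, I would write
\begin{align*}
J^{[N]}_1(u^{[N],\circ}_1,u^{[N],\circ}_{-1}) &\le J^{\infty}_1(u^{\circ}_1) + \tfrac{C^\circ}{\sqrt{N}}
\le J^{\infty}_1(u^{[N]}_1) + \tfrac{C^\circ}{\sqrt{N}}
\le J^{[N]}_1(u^{[N]}_1,u^{[N],\circ}_{-1}) + \tfrac{C(u^{[N]}_1)+C^\circ}{\sqrt{N}},
\end{align*}
where the first inequality uses \eqref{cost-conv-1}, the second uses the optimality of $u^{\circ}_1$ for the limiting control problem (a consequence of \Cref{Nash-eq}, since $u^{[N]}_1$ is also admissible in the limiting problem once we check that $\mc{U}^{[N]}\subseteq \mc{U}_1$ in the sense of \Cref{Info-set-Adm-Cntrl}), and the third uses \eqref{cost-conv-2}.

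The main obstacle is that the constant $C(u^{[N]}_1)$ appearing in \eqref{cost-conv-2} depends on the deviating strategy, so this bound is not yet uniform over $u^{[N]}_1 \in \mc{U}^{[N]}$. To circumvent this, I would restrict attention, without loss of generality, to deviations satisfying
\begin{equation*}
J^{[N]}_1(u^{[N]}_1,u^{[N],\circ}_{-1}) \le J^{[N]}_1(u^{[N],\circ}_1,u^{[N],\circ}_{-1}) + 1,
\end{equation*}
since otherwise \eqref{epnash1} is trivial. Because the cost functional \eqref{cost-eps} contains the nonnegative term $\mathbb{E}\int_0^T \left|u^{[N]}_1(t)\right|^2 dt$, and the equilibrium cost $J^{[N]}_1(u^{[N],\circ}_1,u^{[N],\circ}_{-1})$ is itself uniformly bounded in $N$ (using \eqref{steq} together with the boundedness of the operators $M, G, \widehat{F}_1, \widehat{F}_2$ and the feedback coefficients from \Cref{Riccati-Bound} and \Cref{reizlb}), this restriction yields a uniform bound
\begin{equation*}
\sup_{N\in\mb{N}}\, \mathbb{E}\int_{0}^{T}\left|u^{[N]}_1(t)\right|^2 dt \le \kappa,
\end{equation*}
for some constant $\kappa$ independent of $N$ and of the particular deviation.

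With this uniform energy bound, I would revisit the proof of \Cref{thm:lemmasum}, \Cref{conatm}, and part (ii) of \Cref{cconatm} to verify that the constants $C(u^{[N]}_1)$ produced there depend on $u^{[N]}_1$ only through $\mathbb{E}\int_0^T |u^{[N]}_1(t)|^2 dt$ (and the model parameters): this is evident from the Gr\"onwall estimates, which handle the control input through a single integrated $L^2$-term at each stage. Consequently the constants can be replaced by a universal constant $\bar C$ depending only on the model data, $T$, and $\kappa$. Combining this with the chain of inequalities above yields
\begin{equation*}
J^{[N]}_1(u^{[N],\circ}_1,u^{[N],\circ}_{-1}) \le \inf_{u^{[N]}_1 \in \mc{U}^{[N]}} J^{[N]}_1(u^{[N]}_1,u^{[N],\circ}_{-1}) + \epsilon_N,
\qquad \epsilon_N := \tfrac{\bar C + C^\circ}{\sqrt{N}} = O(\tfrac{1}{\sqrt{N}}),
\end{equation*}
completing the proof.
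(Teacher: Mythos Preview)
Your proposal is correct and follows essentially the same approach as the paper: restrict to deviations that do not beat the equilibrium cost by more than a fixed amount (the paper uses $J^{[N]}_1(u^{[N]}_1,u^{[N],\circ}_{-1}) \le J^{[N]}_1(u^{[N],\circ}_1,u^{[N],\circ}_{-1})$ rather than your ``$+1$'', an immaterial difference), deduce a uniform $L^2$-bound on the control from the cost structure and the $N$-uniform bound on the equilibrium cost via \eqref{steq}, then chain \eqref{cost-conv-1}, the optimality of $u^{\circ}_1$ for the limiting problem, and the now-uniform version of \eqref{cost-conv-2}. Your explicit remark that one must check $\mc{U}^{[N]}\subseteq\mc{U}_1$ before invoking the limiting optimality is a point the paper leaves implicit.
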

\begin{proof}
It is evident that the $\epsilon$-Nash property \eqref{epnash1} can be equivalently expressed as
\begin{equation} \label{epnash2}
  J^{[N]}_1(u^{[N],\circ}_{1},u^{[N],\circ}_{-1}) \leq  \inf_{ \left\{u^{[N]}_1 \in \mc{A}^{[N]} \right\} }  J^{[N]}_1(u^{[N]}_1,u^{[N],\circ}_{-1})+\epsilon_N,
\end{equation}
where $\mc{A}^{[N]}:=\left\{u^{[N]}_1 \in \mc{U}^{[N]}: J^{[N]}_1(u^{[N]}_1,u^{[N],\circ}_{-1}) \leq  J^{[N]}_1(u^{[N],\circ}_{1},u^{[N],\circ}_{-1})  \right\}$ is a non-empty set.
We then note that, by \eqref{steq}, the equilibrium cost $J^{[N]}_1(u^{[N],\circ}_{1},u^{[N],\circ}_{-1})$ is uniformly bounded by a constant, denoted by $C^{\circ \circ}$, for all $N \in \mathbb{N}$. Thus, for any $N \in \mathbb{N}$ and for any $u^{[N]}_1 \in \mc{A}^{[N]}$, we have
\begin{equation} \label{boundu}
\mathbb{E}\left [\int_{0}^{T}\left |u^{[N]}_1(t) \right |^2dt  \right ]\leq   J^{[N]}_1(u^{[N]}_1,u^{[N],\circ}_{-1}) \leq  J^{[N]}_1(u^{[N],\circ}_{1},u^{[N],\circ}_{-1}) \leq C^{\circ \circ}.
\end{equation}
From \eqref{boundu}, it is straightforward to verify that the constants $C({u^{[N]}_1})$ in \Cref{thm:lemmasum}, \Cref{conatm} and \Cref{cconatm} can be chosen universally for all $N\in \mathbb{N}$ and $u^{[N]}_1 \in \mc{A}^{[N]}$ in each inequality. Therefore, for \eqref{cost-conv-2}, we  have 
\begin{equation} \label{costconv3}
\left|J^{\infty }_1(u^{[N]}_1)-J^{[N]}_1(u^{[N]}_1,u^{[N],\circ}_{-1}) \right|   \leq \frac{C^{\ast}}{\sqrt{N}}, \quad \forall N \in \mathbb{N},\quad \forall u^{[N]}_1 \in \mc{A}^{[N]},
\end{equation}
where $C^{\ast}$ only depends on the model parameters and the constant $C^{\circ \circ}$. Therefore, we have \begin{equation} 
J^{\infty }_1(u^{[N]}_1) \leq J^{[N]}_1(u^{[N]}_1,u^{[N],\circ}_{-1})+\frac{C^{\ast}}{\sqrt{N}}, \quad \forall N \in \mathbb{N},\quad \forall u^{[N]}_1 \in \mc{A}^{[N]},
\end{equation}
and hence 
\begin{equation} \label{ineN}
J^{\infty }_1(u^{[N]}_1) \leq \inf_{ \left\{u^{[N]}_1 \in \mc{A}^{[N]} \right\} } J^{[N]}_1(u^{[N]}_1,u^{[N],\circ}_{-1})+\frac{C^{\ast}}{\sqrt{N}},\quad \forall N \in \mathbb{N},\quad \forall u^{[N]}_1 \in \mc{A}^{[N]}.
\end{equation}
We then recall from \Cref{sincon} and \Cref{fixp} that
\begin{equation}
 J^{\infty }_1(u^{\circ}_1) \leq J^{\infty }_1(u^{[N]}_1),\quad \forall N \in \mathbb{N}, \quad \forall u^{[N]}_1 \in \mc{A}^{[N]}.
\end{equation}
Next, from \eqref{cost-conv-1} and \eqref{ineN}, we have
\begin{equation} 
J^{[N]}_1(u^{[N],\circ }_{1},u^{[N],\circ}_{-1})-\frac{C^{\circ}}{\sqrt{N}} \leq  J^{\infty }_1(u^{\circ}_1) \leq     J^{\infty }_1(u^{[N]}_1) \leq \inf_{ \left\{u^{[N]}_1 \in \mc{A}^{[N]} \right\} } J^{[N]}_1(u^{[N]}_1,u^{[N],\circ}_{-1})+\frac{C^{\ast}}{\sqrt{N}},
\end{equation}
which finally gives 
\begin{equation}
J^{[N]}_1(u^{[N],\circ }_{1},u^{[N],\circ}_{-1}) \leq \inf_{ \left\{u^{[N]}_1 \in \mc{A}^{[N]} \right\} } J^{[N]}_1(u^{[N]}_1,u^{[N],\circ}_{-1})+\frac{C^{\ast}+C^{\circ}}{\sqrt{N}}.
\end{equation}
\end{proof}
\section {Concluding Remarks}\label{conclusion}
We conclude the paper by studying a toy model and introducing a slight generalization of our framework that could broaden its applicability.  
\subsection{A Toy Model}
We now study a toy model
inspired by the model presented in \cite{fouque2018mean}, where the dynamics of a representative agent indexed by $i, i\in \mc{N},$ is given by
\begin{align} \label{dylimitoy}
 dx_i(t)&=(Ax_i(t)+Bu_i(t))dt+\sigma dW_{i}(t) , \notag \\ 
 x_i(0)&=\xi.  
\end{align}
Moreover, agent $i$ aims to minimize the cost functional
\begin{equation}
J^{[N]}(u_i, u_{-i})=\mathbb{E}\int_{0}^{T}\left(\left|M^{\frac{1}{2}}\left (x_{i}(t)-x^{(N)}(t)  \right ) \right|^{2}+\left|u_{i}(t) \right|^{2}\right)dt+\mathbb{E}\left|G^{\frac{1}{2}}\left (x_{i}(T)-x^{(N)}(T)  \right ) \right|^{2}.  
\end{equation}
According to \eqref{dyfin}-\eqref{cosfinN}, we have $F_1 = F_2 = D = E = 0$, and $\widehat{F}_1=\widehat{F}_2=I$ for the above model. 
Applying our results, the contraction condition \eqref{concondi} simplifies to
\begin{equation}
TC_6 \mathrm{exp}(4TM_{T}C_6)<1, \notag 
\end{equation}
where $C_6=2M_{T}^{2} \left\|B \right\|^{2}\left ( \left \| G \right \|+T\left \| M \right \|  \right )$. To find a solution for a fixed $T > 0$, we can adjust the parameters $G$ and $M$ to ensure that the contraction condition is satisfied. Then the $\epsilon$-Nash equilibrium is given by $\{u^{\circ}_i\}_{i \in \mc{N}}$, where
\begin{gather} 
u^{\circ}_i(t) = -B^{\ast}(\Pi(T-t)x_i(t) +q(T-t)),\label{opticons-toy}\\
\bar{x}(t)=S(t)\xi- \int_{0}^{t}S(t-r)B^{\ast}(\Pi(T-t)\bar{x}(t) +q(T-t) )dr, \\ 
\frac{d}{dt}\left<\Pi(t)x, x\right> = 2\left<\Pi(t)x, Ax\right> - \left<\Pi(t)BB^{\ast}\Pi(t)x, x\right> + \left<Mx, x\right>,  \label{ricas-toy}\\
\dot{q}(t) = \left(A^* -\Pi(t)BB^*\right)q(t) - M\bar{x}(T-t), \label{qts-toy}
\end{gather}
with $\Pi(0) = G$, $x \in \mathcal{D}(A)$ and $q(0) = -G\bar{x}(T)$.
\subsection{A Slight Generalization }
Recall that we have defined $D \in \mathcal{L}(H, \mathcal{L}(V, H))$, $E \in \mathcal{L}(U, \mathcal{L}(V, H))$, $F_2 \in \mathcal{L}(H; \mathcal{L}(V; H))$, and $\sigma \in \mathcal{L}(V; H)$ for the volatility in \eqref{dyfin}. As mentioned in \Cref{integrand}, this setting might be more restrictive than necessary. One reason for this conservatism is our desire to align, in particular the derivations of \Cref{sincon}, with the foundational literature, notably references such as \cite{ichikawa1979dynamic} and \cite{ichikawa1982stability}. These settings could potentially be generalized to bounded linear operators from $H$ and $U$ to $\mathcal{L}_2(V_Q, H)$, with $\sigma$ also set as $\sigma \in \mathcal{L}_2(V_Q, H)$. The conclusions of this paper could likely be achieved with only minor modifications.

\bibliographystyle{elsarticle-num-names} 

\bibliography{sample}

\end{document}